\par\end{em}\end{center}\end{quotation}}
\newtheorem{theorem}{Theorem}[section]
\newtheorem{lemma}[theorem]{Lemma}
\newtheorem{proposition}[theorem]{Proposition}
\theoremstyle{definition}
\newtheorem{definition}[theorem]{Definition}
\newtheorem{remark}[theorem]{Remark}
\newtheorem{example}[theorem]{Example}
\DeclareMathOperator{\SL}{\mathsf{SL}}
\newcommand{\id}{\mathrm{id}}
\DeclareMathOperator{\Coh}{\mathsf{Coh}}
\DeclareMathOperator{\Qcoh}{\mathsf{QCoh}}
\DeclareMathOperator{\Tor}{\mathsf{Tor}}
\DeclareMathOperator{\VB}{\mathsf{VB}}
\DeclareMathOperator{\Pic}{\mathsf{Pic}}
\DeclareMathOperator{\Hom}{\mathsf{Hom}}
\DeclareMathOperator{\Ext}{\mathsf{Ext}}
\DeclareMathOperator{\Aut}{\mathsf{Aut}}
\DeclareMathOperator{\End}{\mathsf{End}}
\DeclareMathOperator{\Spec}{\mathsf{Spec}}
\DeclareMathOperator{\dX}{\mathfrak{X}}
\DeclareMathOperator{\dY}{\mathfrak{Y}}
\DeclareMathOperator{\idm}{\mathfrak{m}}
\newcommand{\kk}{\mathbbm{k}}
\newcommand{\KK}{\mathbbm{K}}
\newcommand{\CC}{\mathbb{C}}
\newcommand{\RR}{\mathbb{R}}
\newcommand{\NN}{\mathbb{N}}
\newcommand{\ZZ}{\mathbb{Z}}
\newcommand{\LL}{\mathbb{L}}
\newcommand{\FF}{\mathbb{F}}
\newcommand{\EE}{\mathbb{E}}
\newcommand{\ff}{\mathbbm{f}}
\renewcommand{\gg}{\mathbbm{g}}
\newcommand{\XX}{\mathbb{X}}
\newcommand{\YY}{\mathbb{Y}}
\newcommand{\HH}{\mathsf{H}}
\renewcommand{\HH}{\mathbb{H}}
\newcommand{\PP}{\mathbbm{P}}
\newcommand{\sHom}{\mbox{\it{Hom}}}
\newcommand{\sExt}{\mbox{\it{Ext}}}
\newcommand{\kA}{\mathcal{A}}
\newcommand{\kC}{\mathcal{C}}
\newcommand{\kE}{\mathcal{E}}
\newcommand{\kF}{\mathcal{F}}
\newcommand{\kG}{\mathcal{G}}
\newcommand{\kH}{\mathcal{H}}
\newcommand{\kI}{\mathcal{I}}
\newcommand{\kO}{\mathcal{O}}
\newcommand{\kP}{\mathcal{P}}
\newcommand{\kR}{\mathcal{R}}
\newcommand{\kK}{\mathcal{K}}
\newcommand{\kS}{\mathcal{S}}
\newcommand{\kX}{\mathcal{X}}
\newcommand{\kZ}{\mathcal{Z}}
\newcommand{\lar}{\longrightarrow}
\def\sD{\mathsf D} 
\def\sE{\mathsf E} 
\def\sF{\mathsf F} 
\def\sG{\mathsf G} \def\sT{\mathsf T}
\def\sI{\mathsf I}
\def\kron#1#2{\xymatrix@C=2em{{#1}\ar@/^3pt/[r]\ar@/_3pt/[r]&{#2}}}
\def\bu{{\scriptscriptstyle\bullet}}
\tikzset{
  decorate with/.style={decorate,decoration={shape backgrounds,shape=#1,shape size=1.5mm}},
   deco/.style={decorate with=dart},
   ordi/.style={draw,-stealth,  thick},
   conj/.style={dashed, draw, thick},
   ve/.style={circle, draw, thick, fill=blue!20, inner sep=1pt, outer sep=2pt, minimum size=7pt},
    dot/.style={fill=blue!10,circle,draw, inner sep=1pt, minimum size=5pt},
  dv/.style={star,star points=5,
star point ratio=2, draw, thick, fill=green!20, inner sep=1pt,outer sep=2pt,minimum size=7pt}
}
\tikzset{
    tbl5 nodes/.style={
        rectangle,
        execute at begin node=$,
       execute at end node=$,
       fill=blue!5,
        align=center,
        text depth=0.5ex,
        text height=2ex,
        inner xsep=0pt,
        outer sep=0pt,
           },
    tbl5/.style={
        matrix of nodes,
        row sep=-\pgflinewidth,
        column sep=-\pgflinewidth,
        nodes={
            tbl5 nodes
        },
        execute at empty cell={\node[draw=none]{};}
    }
  }
\title[Exceptional curves and real curve orbifolds]{Exceptional hereditary curves and real curve orbifolds}
\author{Igor Burban}
\address{
Paderborn University
}
\email{burban@math.uni-paderborn.de}
\subjclass[2010]{Primary 14A22, 16E35}
\keywords{Non-commutative curves, hereditary algebras and orders, equivariant coherent sheaves, Klein surfaces, tilting theory}
\dedicatory{Dedicated to Yuriy Drozd
on the occasion of his 80th birthday}
\begin{document}

\begin{abstract}
In this paper, we elaborate the theory of exceptional hereditary curves over arbitrary fields. In particular, we study the category of equivariant coherent sheaves on a regular projective curve whose quotient curve has genus zero and prove  existence of a tilting object in this case. We also establish  a link between wallpaper groups and real hereditary curves, providing details to an old observation made by Helmut Lenzing.  
\end{abstract}

\maketitle

\section{Introduction}

Let $\kk$ be an arbitrary field. 
The categories  $\Coh(\XX)$ of coherent sheaves on  a non-commutative projective  hereditary curve $\mathbb{X} = (X, \kH)$ (where $X = (X, \kO)$ is a commutative regular projective   curve over $\kk$ and $\mathcal{H}$ is a sheaf of hereditary $\mathcal{O}$-orders) provide an important class of  $\kk$-linear $\Ext$-finite hereditary categories.  In the case when $X = \PP^1_{\kk}$ and $\kk = \bar{\kk}$ is algebraically closed, $\Coh(\XX)$ is equivalent to the category of coherent sheaves  on an appropriate  weighted projective line of Geigle and Lenzing and admits a tilting object \cite{GeigleLenzing}. In particular, there exist a finite dimensional $\kk$-algebra $\Sigma$ (which belongs to the class of so-called canonical algebras 
\cite{RingelTameAlgebras}) and an exact equivalence of derived categories
\begin{equation}\label{E:GLTilting}
 D^b\bigl(\Coh(\XX)\bigr) \lar D^b(\Sigma\mathsf{-mod}).
 \end{equation}
In the case of an arbitrary base field 
$\mathbbm{k}$, a hereditary projective curve $\XX$ is called \emph{exceptional} if its derived category $D^b\bigl(\Coh(\XX)\bigr)$ admits a tilting object. Dropping  the assumption $\kk = \bar{\kk}$ makes the theory of such curves 
significantly richer. Firstly, the underlying commutative curve $X$ can be an arbitrary Brauer--Severi curve.  Another reason for complications is caused by the fact that the  Brauer group $\mathsf{Br}\bigl(\kk(X)\bigr)$ of the function field $\mathbbm{k}(X)$ of $X$ is no longer zero and arithmetic phenomena start to play  an important role in the study of the category $\Coh(\XX)$. At this point let us mention that the Brauer class 
 $\eta_{\XX} = \bigl[\Gamma(X, \kK \otimes_\kO \kH)\bigr] \in \mathsf{Br}\bigl(\kk(X)\bigr)$ of an exceptional hereditary curve $\XX$ can not take arbitrary values. Moreover, $\XX$ is a weighted projective line if and only if $X = \PP^1_\kk$ and $\eta_{\XX} = 0$.  
 
The study of exceptional hereditary curves over arbitrary base fields was initiated by Lenzing in \cite{LenzingFirst}. However, the underlying hereditary abelian category $\Coh(\XX)$ was defined in an implicit way, without an involvement of sheaves of orders. Quoting for example \cite[page 415]{HappelReiten}:  ``Since there is at present no ``geometric'' definition available for coherent sheaves  on a weighted projective line over an arbitrary field, the formulation of our main result will be somewhat different from the formulation for an algebraically closed field $\kk$.''

A classification of $\kk$-linear $\Ext$-finite hereditary abelian categories   (see 
\cite{Happel, ReitenvandenBergh} for the case $\kk = \bar{\kk}$ and \cite{HappelReiten, LenzingReiten} for an arbitrary $\kk$) allowed one to define  exceptional hereditary curves in an ``axiomatic way'' by providing a list of characterizing properties of the category $\Coh(\XX)$. In this work, we give a further elaboration of this theory, starting with a ringed space $\XX=(X, \kH)$ itself as a primary object.  

The first main result of this paper is Theorem \ref{T:TiltingMain} which gives a straightforward construction  of a tilting complex in the derived category  $D^b\bigl(\Coh(\XX)\bigr)$
for a complete  hereditary curve $\XX$ of a special type. This allows one to prove a generalization of the equivalence (\ref{E:GLTilting}) in the case of an arbitrary field $\kk$.

A natural class of examples of exceptional heredirary curves arise from finite group actions. Let $Y$ be a complete regular curve over $\kk$ and $G \subset \Aut_{\kk}(Y)$ be a finite group such that
$\mathsf{gcd}\bigl(|G|, \mathsf{char}(\kk)\bigr) = 1$ and the quotient $X = Y/G$ is a curve of genus zero. Then there exists a hereditary curve $\XX = Y \hspace{-0.75mm}\sslash \hspace{-0.75mm}G = (X, \kH)$ such that $\Coh^G(Y) \simeq \Coh(\XX)$, where 
 $\Coh^G(Y)$ is the category of $G$-equivariant coherent sheaves on $Y$. This result is well-known but we elaborate its proof in Proposition \ref{P:EquivNonComm}. Then we show that all such $\XX$ are exceptional with $\eta_{\XX} = 0$ (see Theorem \ref{T:Equivariant}), extending results of \cite{Orbifolds} on the case of an arbitrary base field $\kk$; see also \cite{LenzingFirst, GeigleLenzing, Kirillov}. 

Wallpaper groups lead to a very interesting class  of finite group actions \emph{over} $\RR$ on \emph{complex} elliptic  curves, what allows one to make  a link  to the so-called real tubular curves.  This striking observation was made by Lenzing many years ago \cite{LenzingLastTalk}, although the underlying  details were never published. This gap in the literature is filled by  Theorem \ref{T:Wallpapers}. Namely, to any wallpaper group $W$ one can attach a hereditary curve $\XX_W$ and in 13 cases out of 17 the corresponding derived category $D^b\bigl(\Coh(\XX_W)\bigr)$ admits a tilting object, whose endomorphism algebra $\Sigma_W$ is a tubular canonical algebra and whose type can be read off the orbifold description of the group $W$;    see also Remark \ref{R:LenzingKussin} for a different approach.

\smallskip
\noindent
\emph{Acknowledgement}. This work was partially supported by the German Research Foundation
SFB-TRR 358/1 2023 -- 491392403. I am grateful to Dirk Kussin, Daniel Perniok and Charly Schwabe  for fruitful discussions of results related to this work.

\section{Hereditary orders}
We begin by recalling the notion of a classical order and its properties.  

\begin{definition}\label{D:Order}
Let $O$ be  an excellent  reduced  equidimensional ring of {Krull dimension one}  and
$K := \mathsf{Quot}(O)$ be the corresponding total ring of fractions. 
An $O$-algebra $A$ is an $O$-\emph{order} if the following conditions are fulfilled:
\begin{itemize}
\item $A$ is a finitely generated torsion free $O$-module.
\item $A_K:= K \otimes_O A$ is a  semi-simple $K$-algebra, having finite length as a $K$-module.
\end{itemize}
\end{definition}

\noindent
Let $O$ be as above, $O' \subseteq O$ be a subring such that the corresponding ring extension is finite and $A$ be an $O$-algebra. Then $A$ is an $O$-order if and only if $A$ is an $O'$-order.
Moreover, if $K' := \mathsf{Quot}(O')$ then we have: $A_K \cong A_{K'}$; see for instance \cite[Lemma 2.8]{bdnpdalcurves}.

\begin{definition}\label{D:orders} Let $A$ be a ring.
\begin{itemize}
\item $A$ is  a  \emph{classical order} (or just an \emph{order}) provided  its center $O = Z(A)$ is a reduced excellent  equidimensional ring  of Krull dimension one, and $A$ is an $O$-order.
\item Let
$K:= \mathsf{Quot}(O)$. Then $A_K:= K \otimes_O A$ is called the \emph{rational envelope} of $A$.
\item A ring $\widetilde{A}$ is called an \emph{overorder} of $A$ if $A \subseteq \widetilde{A} \subset A_K$ and $\widetilde{A}$ is finitely generated as (a left) $A$-module.
\item An order $A$ is called \emph{maximal} if it has no proper overorders. 
\end{itemize}
\end{definition}

\noindent
Note that for any overorder $\widetilde{A}$ of $A$,  the map $K \otimes_O \widetilde{A} \lar A_K$ is automatically an isomorphism. Hence,
$A_K = \widetilde{A}_K$ and $\widetilde{A}$ is an order over $O$.

\begin{lemma}\label{L:HeredOrders} Let $H$ be an order   and  $O = Z(H)$ be its center. Then the following results are true.
\begin{enumerate}
\item[(a)] Assume that $H$ is \emph{hereditary} (i.e.~$\mathsf{gl.dim}(H) =1$).
Then $O \cong O_1 \times \dots \times O_r$, where  $O_i$ is a   Dedekind domain for all $1\le i \le r$.
\item[(b)] Suppose that $O$ is semilocal. Let   $J$  be the Jacobson radical of $H$ and
$\widehat{H} =
 \varprojlim\limits_{k} \bigl(H/J^k\bigr)$
be the completions of $H$. Then $H$ is hereditary  if and only if $\widehat{H}$ is  hereditary.
\end{enumerate}
\end{lemma}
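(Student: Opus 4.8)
The plan is to prove (b) first, by a standard faithfully-flat-base-change argument, and then to use it to reduce (a) to a known fact about complete hereditary orders.

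\emph{Part (b).} Since $O$ is semilocal, so is $H$, hence $\bar{H}:=H/J$ is semisimple Artinian and $\mathsf{gl.dim}(H)=\pd_{H}(\bar H)$ (because $\bar H$ contains every simple left $H$-module as a direct summand, and $H$ is Noetherian); the same identity holds for $\widehat H$. I would first record two elementary observations: $J\cap O=\rad(O)$ --- if $x\in J\cap O$ then $1-ax$ is a central unit of $H$, hence a unit of $O=Z(H)$, for every $a\in O$ --- and the $J$-adic and $\rad(O)$-adic topologies on $H$ coincide, since $\rad(O)\,H\subseteq J$ while $H/\rad(O)H$ is Artinian, so $J^{N}\subseteq\rad(O)H$ for some $N$. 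Consequently $\widehat H=\widehat O\otimes_{O}H$, where $\widehat O$ is the $\rad(O)$-adic completion of $O$; in particular $\widehat H$ is Noetherian, $J(\widehat H)=J\widehat H$, $\widehat H/J\widehat H\cong\bar H$, and $H\to\widehat H$ is \emph{faithfully} flat, because $O\to\widehat O$ is faithfully flat for $O$ semilocal Noetherian. Applying $\widehat H\otimes_{H}-$ to a resolution of $\bar H$ by finitely generated projective $H$-modules yields $\pd_{\widehat H}(\bar H)\le\pd_{H}(\bar H)$; for the reverse inequality one uses flat base change for $\Ext$ together with faithfully flat descent of projectivity for finitely presented modules. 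Hence $\mathsf{gl.dim}(H)=\mathsf{gl.dim}(\widehat H)$, which is the assertion.

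\emph{Part (a).} Since $O$ is reduced, excellent and equidimensional of Krull dimension one, the claim is equivalent to the statement that $O$ is a regular ring: a ring with these properties all of whose localizations at maximal ideals are discrete valuation rings is automatically a finite product of Dedekind domains. So it suffices to show that $O_{\mathfrak m}$ is a DVR for every maximal ideal $\mathfrak m\subset O$. Fix $\mathfrak m$ and pass to $H_{\mathfrak m}$: localization at the central set $O\setminus\mathfrak m$ preserves the order property, does not increase the global dimension, and commutes with taking the centre, so $H_{\mathfrak m}$ is a hereditary order with centre $O_{\mathfrak m}$; by part (b), its completion $\widehat{H_{\mathfrak m}}$ is then a hereditary order with complete local centre $\widehat{O_{\mathfrak m}}$. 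At this point I would invoke the classical structure theory of hereditary orders over complete local rings (see, e.g., \cite{bdnpdalcurves} and the references therein; compare the work of Auslander--Goldman and Reiner), which in particular shows that the centre of such an order is a complete discrete valuation ring. Thus $\widehat{O_{\mathfrak m}}$ is regular, and regularity descends along the faithfully flat map $O_{\mathfrak m}\to\widehat{O_{\mathfrak m}}$ --- again by the flat comparison of global dimensions, or using that $O$ is excellent --- so $O_{\mathfrak m}$ is a regular local ring of dimension one, i.e.\ a discrete valuation ring.

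I expect the genuine obstacle to be this last input to part (a): the identification of the centre of a complete hereditary order as a complete DVR, which is where the classification of hereditary orders is really needed. Everything else is formal homological algebra and bookkeeping with faithful flatness. One point worth flagging is that completions of one-dimensional domains need not be domains, so a priori one cannot assume $\widehat{O_{\mathfrak m}}$ is a domain; the structure theorem handles this by exhibiting $Z(\widehat{H_{\mathfrak m}})$ concretely as the valuation ring of a field.
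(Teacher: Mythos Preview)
The paper does not actually prove this lemma: its entire ``proof'' is the sentence ``\emph{Proofs} of all these results can be for instance found in \cite{ReinerMO}.'' So you are not competing with an argument in the paper but supplying one that the paper outsources.

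Your route is the standard one and is essentially what one finds in Reiner: for (b), identify the $J$-adic completion with $\widehat O\otimes_O H$ by showing the $J$-adic and $\rad(O)$-adic topologies agree, then use faithful flatness of $O\to\widehat O$ to transport projective dimension; for (a), localize, complete, and invoke the structure theory of complete hereditary orders to see the centre is a DVR, then descend regularity. This is correct, and your caveat at the end is well placed --- the genuine input is precisely the classification of hereditary orders over a complete local ring, which is the content of the Harada/Reiner structure theorem that the paper states immediately after this lemma.

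Two minor points worth a line each if you write this up. First, ``localization commutes with taking the centre'' does hold here, but only because $H$ is a finitely generated $O$-module: centrality is then detected by commutation with a fixed finite $O$-generating set, and localization is exact. Second, Reiner's treatment largely \emph{assumes} the base ring is already Dedekind, so part (a) in the generality stated (excellent reduced equidimensional base) is slightly more than a direct citation; your reduction to the complete local case is exactly what is needed to bridge that gap.
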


\smallskip
\noindent
\emph{Proofs} of all these results can be for instance found in \cite{ReinerMO}. \qed

\smallskip
\noindent
 Let $O$ be a complete discrete valuation ring,  $A$ be a maximal order with center $O$ and $J$ be the Jacobson radical of $A$. We chose an element $w \in J$ such that 
 $J = Aw = w A$; see \cite[Theorem 18.7]{ReinerMO} for the existence of such $w$.
 For any sequence of natural numbers
$\vec{p} = \bigl(p_1, \dots, p_r\bigr)$, consider the $O$-algebra
\begin{equation*}
H(A, \vec{p}) :=
\left[
\begin{array}{ccc|ccc|c|ccc}
A & \dots & A & J & \dots & J & \dots & J & \dots &  J  \\
\vdots & \ddots & \vdots & \vdots &  & \vdots &  & \vdots & & \vdots \\
A & \dots & A & J & \dots & J & \dots & J & \dots &  J  \\
\hline
A & \dots & A & A & \dots & A & \dots & J & \dots &  J  \\
\vdots &  & \vdots & \vdots  & \ddots & \vdots &  & \vdots & & \vdots \\
A & \dots & A & A & \dots & A & \dots & J & \dots &  J  \\
\hline
\vdots &  & \vdots & \vdots & & \vdots & \ddots & \vdots &   & \vdots\\
\hline
A & \dots & A & A & \dots & A & \dots & A & \dots &  A  \\
\vdots &  & \vdots &  \vdots &  & \vdots &  & \vdots & \ddots & \vdots \\
A & \dots & A & A & \dots & A & \dots & A & \dots &  A  \\
\end{array}
\right] = \left[
\begin{array}{cccc}
A & J & \dots & J \\
A & A & \dots & J\\
\vdots & \vdots & \ddots & \vdots \\A  & A & \dots & A\\
\end{array}
\right]^{\underline{(p_1, \dots, p_r)}}
\end{equation*}
where the size
of the $i$-th diagonal block  is $(p_i \times p_i)$ for each $1 \le i \le r$.

\begin{theorem} The following results are true. 
\begin{enumerate}
\item[(i)] $H(A, \vec{p})$ is a hereditary order, whose center is isomorphic to $O$.
\item[(ii)] Let $A'$ be another maximal order and $\vec{p}' \in \NN^{s}$. Then $H(A, \vec{p}) \cong H(A', \vec{p}')$ if and only if $A \cong A'$, $r = s$ and $\vec{p}'$ is a cyclic shift of $\vec{p}$.
\item[(iii)] Let $H$ be a hereditary order, whose center is isomorphic to $O$. Then $H \cong 
H(A, \vec{p})$ for some maximal order $A$ and a vector $\vec{p} \in \NN^r$ for some $r \in \NN$.
\item[(iv)] We have the following description of the Jacobson radical of $H = H(A, \vec{p})$: 
\begin{equation*}
\mathsf{rad}(H) = 
\left[
\begin{array}{cccc}
J & J & \dots & J \\
A & J & \dots & J\\
\vdots & \vdots & \ddots & \vdots \\A  & A & \dots & J\\
\end{array}
\right]^{\underline{(p_1, \dots, p_r)}}.
\end{equation*}
In particular, we have:
$$
H/\mathsf{rad}(H) \cong M_{p_1}(D) \times \dots \times M_{p_r}(D),
$$
where $D = A/J$ is the residue skew field of $A$. 
\item[(v)] Let $\vec{e} := (1, \dots, 1) \in \NN^r$. Then the orders $H(A, \vec{p})$ and
$$H_r(A) := H(A, \vec{e}) = \left[
\begin{array}{cccc}
A & J & \dots & J \\
A & A & \dots & J\\
\vdots & \vdots & \ddots & \vdots \\A  & A & \dots & A\\
\end{array}
\right]
$$ are Morita equivalent. 
\end{enumerate}
\end{theorem}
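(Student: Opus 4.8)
The plan is to read all five statements off the classical structure theory of hereditary orders over a complete discrete valuation ring (essentially \cite{ReinerMO}, together with the work of Brumer and Harada), organising the argument so that \textup{(iv)} and \textup{(i)} come first, then \textup{(v)}, and finally the recognition statement \textup{(ii)} together with the classification \textup{(iii)}.

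First I would establish \textup{(iv)} and \textup{(i)} simultaneously. Write $H = H(A,\vec p)$ and let $N \subseteq H$ be the set of matrices displayed in \textup{(iv)}. A routine block-wise check, using only $AA \subseteq A$, $AJ = JA = J$ and $J \subseteq A$, shows that $N$ is a two-sided ideal with $H/N \cong M_{p_1}(D) \times \dots \times M_{p_r}(D)$, where $D = A/J$; this quotient is semisimple since $D$ is a skew field, so $\rad(H) \subseteq N$. For the opposite inclusion I would note that $\mathfrak m H \subseteq N$, where $\mathfrak m = \rad(O)$, and that $N/\mathfrak m H$ is nilpotent in the finite-dimensional $O/\mathfrak m$-algebra $H/\mathfrak m H$ (a power of $N$ lands in $\mathfrak m H$, using $J^{e} = \mathfrak m A$ for the ramification index $e$); since $\mathfrak m H \subseteq \rad(H)$ and the radical of the module-finite $O$-algebra $H$ is the preimage of $\rad(H/\mathfrak m H)$, this gives $N \subseteq \rad(H)$ and hence \textup{(iv)}. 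Next, a direct computation of the centre: an element commuting with every diagonal block $M_{p_i}(A)$ must be of the form $\mathrm{diag}(o_1 I_{p_1}, \dots, o_r I_{p_r})$ with $o_i \in Z(A) = O$, and commuting with the sub-diagonal copies of $A$ forces all the $o_i$ equal, so $Z(H) = O \cdot I_N \cong O$. Finally, heredity: over a complete discrete valuation ring an order is hereditary precisely when its Jacobson radical is projective as a one-sided module \cite{ReinerMO}, and from \textup{(iv)} one sees that $\rad(H)$ is, as a left $H$-module, isomorphic to a permutation of the columns of $H$ (the radical of each column indecomposable projective is again an indecomposable projective, the next one in a cyclic ordering, an identification that involves multiplying a copy of $J$ back to a copy of $A$ by $w$); thus $\rad(H)$ is projective on both sides, the exact sequence $0 \to \rad(H) \to H \to H/\rad(H) \to 0$ bounds the projective dimension of every simple module by one, and $\mathsf{gl.dim}(H) \le 1$; since $H$ is not semisimple this is an equality, proving \textup{(i)}.

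For \textup{(v)} I would take $e \in H$ to be the sum of the first diagonal matrix unit in each of the $r$ blocks. Comparing matrix patterns gives $eHe \cong H_r(A)$, and $HeH = H$ because the two-sided ideal generated by $e$ already contains the identity of each full matrix block $M_{p_i}(A)$, and together with the sub-diagonal copies of $A$ these generate all of $H$. Hence $e$ is a full idempotent, $He$ is a progenerator, and the usual Morita argument yields the asserted equivalence.

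Finally, for \textup{(ii)} and \textup{(iii)} I would pass to the rational envelope. Tensoring $H = H(A,\vec p)$ with $K$ fills every block, so $H_K \cong M_N(\Delta)$ with $\Delta := A_K$ a central division $K$-algebra; by the Wedderburn theorem $\Delta$, and therefore its unique maximal order $A$, is recovered from $H$. The number $r$ and the weights $p_i$ are recovered from $H/\rad(H)$, while the $\Ext$-quiver of $H$ — the structure of $\rad(H)/\rad^2(H)$ as a bimodule over $H/\rad(H)$, which \textup{(iv)} makes explicit — is the \emph{oriented} cycle on $r$ vertices carrying the weights $p_i$; an isomorphism $H(A,\vec p) \cong H(A',\vec p')$ induces an isomorphism of these weighted oriented cycles, whence $A \cong A'$, $r = s$ and $\vec p'$ is a cyclic shift of $\vec p$. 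Conversely a cyclic shift is realised by conjugating $H(A,\vec p)$ by a block permutation matrix composed with a suitable diagonal matrix of powers of $w$ (using $wAw^{-1} = A$ and $wA = Aw = J$, so that $w^{-1}J = Jw^{-1} = A$), so it does produce an isomorphism; this proves \textup{(ii)}. For \textup{(iii)}, given an arbitrary hereditary order $H$ with $Z(H) \cong O$, I would first split $H$ into its connected (indecomposable as a ring) components and treat each separately, so that the $\Ext$-quiver may be assumed connected; then, using again that $\rad(H)$ is projective, I would decompose $H = \bigoplus_\ell P_\ell$ and show that each $\rad(P_\ell)$ is indecomposable projective, so that $\ell \mapsto [\rad(P_\ell)]$ is a permutation of the isomorphism classes of indecomposable projectives which connectedness forces to be a single $r$-cycle; choosing a basis of $\Delta^N$ adapted to the chain $H \supseteq \rad(H) \supseteq \rad^2(H) \supseteq \cdots$ then brings $H$ into the normal form $H(A,\vec p)$, with the $p_i$ the successive jumps of the chain. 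The main obstacle is precisely this last step — the classification \textup{(iii)} is the full structure theorem for hereditary orders over a complete discrete valuation ring — together with two delicate inputs feeding into it and into \textup{(ii)}: the equivalence ``hereditary $\Leftrightarrow$ radical projective'', and the fact that the $\Ext$-quiver is an \emph{oriented} cycle, which is what forces the ambiguity in \textup{(ii)} to be a cyclic and not a dihedral shift.
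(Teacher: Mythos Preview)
The paper does not actually prove this theorem: it writes ``\emph{Proofs} of all these results can be for instance found in \cite{Harada, Harada2} as well as in \cite{ReinerMO}'' and stops there. Your sketch is correct and is, in essence, a faithful reconstruction of the classical arguments in precisely those references: the identification of $\rad(H)$ via a nilpotent-modulo-$\mathfrak{m}H$ argument, heredity via projectivity of the radical, the Morita equivalence through a full idempotent, the recovery of $(A,r,\vec{p})$ from the rational envelope together with the oriented $\Ext$-quiver, and the normal-form reduction for (iii) are exactly what one finds in Harada and Reiner. You have also correctly isolated the substantive difficulties --- the structure theorem (iii), the equivalence ``hereditary $\Leftrightarrow$ radical projective'', and the orientedness of the cycle that rules out a dihedral ambiguity in (ii).

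One small point worth making explicit in your argument for (iii): the step ``each $\rad(P_\ell)$ is indecomposable projective'' is most cleanly justified by noting that for a hereditary order over a complete DVR the radical $\rad(H)$ is an \emph{invertible} two-sided ideal, so $\rad(H)\otimes_H -$ is an auto-equivalence of $H\mathsf{-mod}$ and hence preserves indecomposability; you use this implicitly but do not say it.
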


\smallskip
\noindent
\emph{Proofs} of all these results can be for instance found in \cite{Harada, Harada2} as well as in \cite{ReinerMO}. \qed

\begin{remark}
In what follows, the hereditary order $H = H(A, \vec{p})$  will be called \emph{standard}. Moreover, the following statements are true. 
\begin{enumerate}
\item[(i)] There are precisely $r$ pairwise non-isomorphic indecomposable projective left $H$-modules:
\begin{equation}\label{E:IndecompProjectives}
P_1 = \left[
\begin{array}{c}
A \\
A \\
\vdots \\
A \\
\end{array}
\right]^{\underline{(p_1, \dots, p_r)}}
P_2 = \left[
\begin{array}{c}
J \\
A \\
\vdots \\
A \\
\end{array}
\right]^{\underline{(p_1, \dots, p_r)}}
\quad \mbox{\rm \dots} \quad 
P_r = 
\left[
\begin{array}{c}
J \\
J \\
\vdots \\
A \\
\end{array}
\right]^{\underline{(p_1, \dots, p_r)}}.
\end{equation}
\item[(ii)] Next, there are exactly $r$ pairwise non-isomorphic simple left $H$-modules $S_1, \dots, S_r$, whose minimal  projective resolutions are 
\begin{equation}\label{P:ResolutionsSimples}
 0 \lar P_{j+1} \stackrel{\varepsilon_j}\lar P_j \lar S_j \lar 0 \quad  \mbox{\rm for} \quad 1 \le j \le r.
\end{equation}
For $1 \le j < r$ the morphism $\varepsilon_j$ is just the natural inclusion, whereas $P_{r+1} = P_1$ and $\varepsilon_r$ is given by the right multiplication with the chosen generator $w \in J$. 
\item[(iii)] Let $1 \le i, j \le r$. It is clear that
\begin{equation*}
\Hom_{H}(S_i, S_j) \cong \left\{
\begin{array}{cc}
D^\circ & \mbox{\rm if} \; i = j \\
0 & \mbox{\rm otherwise},
\end{array}
\right.
\end{equation*}
where $D^\circ$ is the opposite ring of $D$. Moreover, 
\begin{equation}
\Ext^1_{H}(S_i, S_j) \cong \left\{
\begin{array}{cc}
D^\circ & \mbox{\rm if} \; j = i+1\\
0 & \mbox{\rm otherwise},
\end{array}
\right.
\end{equation}
where $S_{r+1} = S_1$. 
\item[(iv)] Let $A = \kk\llbracket z\rrbracket$. Then $H_r(A)$ is isomorphic to the arrow completion $\widehat{\kk\bigl[\vec{C}_r\bigr]}$ of the path algebra of the cyclic quiver $\vec{C}_r$:
\begin{equation}\label{E:Cyclicquiver}
\begin{array}{c}
\xymatrix{ & \stackrel{r}{\circ} \ar[ld]_-{a_1} & &\ar[ll]_-{a_r} \circ & \\
\stackrel{1}\circ  \ar[rd]_-{a_2} &         &   & \vdots  & \ar[lu] \circ\\
& \stackrel{2}\circ  \ar[rr] & & \circ \ar[ru] & \\
}
\end{array}
\end{equation}
\end{enumerate}
\end{remark}

Let $A \times A \stackrel{\kappa}\lar O$ be the pairing induced by the so-called \emph{reduced trace map} $A \stackrel{\mathit{tr}}\lar O$; see \cite[Section 9]{ReinerMO}. It is symmetric and invariant (i.e.~$\kappa(a, b) = \kappa(b, a)$ and 
$\kappa(ab, c) = \kappa(a, bc)$ for any $a, b, c \in A$). Moreover, it defines an isomorphism of ($A$--$A$)-bimodules
$$
A \lar \Omega_A := \Hom_O(A, O), \, a \mapsto \kappa(a, \,-\,).
$$
As a consequence, we have the following isomorphisms of ($H$--$H$)-bimodules:
$$
\Omega = \Omega_H := \Hom_O(H, O) \cong \Hom_A\bigl(H, \Hom_O(A, O)\bigr) \cong \Hom_A(H, A). 
$$
It follows that 
$$
\Omega \cong 
\left[
\begin{array}{cccc}
A & A & \dots & A \\
J^{-1} & A & \dots & A\\
\vdots & \vdots & \ddots & \vdots \\J^{-1}  & J^{-1} & \dots & A\\
\end{array}
\right]^{\underline{(p_1, \dots, p_r)}} 
$$
where $J^{-1} = A w^{-1} = w^{-1} A$ viewed as a subset of the rational hull of $A$.

Consider the functor  $\tau := \Omega \otimes_H \,-\,: H\mathsf{-mod} \lar H\mathsf{-mod}$. It is clear that
$$
\tau(P_1) \cong 
\left[
\begin{array}{c}
A \\
J^{-1} \\
\vdots \\
J^{-1} \\
\end{array}
\right]^{\underline{(p_1, \dots, p_r)}}
\cong 
\left[
\begin{array}{c}
J \\
A \\
\vdots \\
A \\
\end{array}
\right]^{\underline{(p_1, \dots, p_r)}}
 = P_2,
$$
where the last isomorphism is given by the right multiplication with $w$. In the same  vein, 
we have: $\tau(P_i) \cong P_{i+1}$ for all $1 \le i \le r$. Note that $\Omega$ is projective (hence flat) viewed as a right $H$-module. It follows that $\tau$ is an exact functor. Actually, $\tau$ is an auto-equivalence of $H\mathsf{-mod}$; see the discussion below. It follows from (\ref{P:ResolutionsSimples}) that $\tau(S_i) \cong S_{i+1}$ for all $1 \le i \le r$. 

\section{Exceptional hereditary curves}
Let $\kk$ be any field and $X$ be a  reduced quasi-projective equidimensional scheme of finite type over $\kk$ of Krull dimension one. Let $X_\circ$ be the set of closed points of $X$, $\kO$ be the structure sheaf of $X$, $\kK$ be its sheaf of rational functions and $\KK := \kK(X)$ be  the ring  of rational functions on $X$. We follow the terminology introduced in   \cite[Section 7]{BurbanDrozdMorita}.

\begin{definition} A  non-commutative curve over $\kk$ is a ringed space $\XX = (X, \kR)$, where $X$ is a commutative curve as above and $\kR$ is a sheaf of $\kO_X$-orders (i.e. $\kR(U)$ is an $\kO(U)$-order for any open affine subset $U \subseteq X$), which is coherent as a sheaf of $\kO_X$-modules.  Such  $\XX$ is called 
\begin{enumerate}
\item[(a)]  \emph{central} if $\kO_x$ is the center of $\kR_x$,
\item[(b)] \emph{homogeneous} (also called \emph{regular} in \cite{BurbanDrozdMorita}) if the order $\kR_x$ is maximal,
\item[(c)] \emph{hereditary} if  the order $\kR_x$ is hereditary
\end{enumerate}
for any $x \in X_\circ$. 
\end{definition}

\begin{remark}
Without loss of generality one may  assume $\XX = (X, \kR) $ to be central; see \cite[Remark 2.14]{BurbanDrozdMorita}. We call such $\XX$ \emph{complete} if $X$ is complete (i.e.~integral and proper (hence, projective))  over $\kk$. Then $\KK$ is a field and $\FF_{\XX} := \Gamma(X, \kK \otimes_\kO \kR)$ is a central simple algebra over $\KK$. Let $\eta := [\FF_{\XX}]$ be the corresponding class in the Brauer group $\mathsf{Br}(\KK)$ of  $\KK$.

We shall denote by $g(X)$ the genus of $X$. From now on, if not otherwise stated, all non-commutative curves over $\kk$ are assumed to be \emph{central and complete} and we shall frequently omit the term ``non-commutative'' when speaking about such $\XX$. 
\end{remark}

If $\XX = (X, \kR)$ is hereditary then $X$ is regular; see Lemma \ref{L:HeredOrders}.  Recall the following easy but fundamental fact due to Artin and de Jong \cite[Proposition 1.9.1]{ArtindeJong} (see also \cite[Proposition 2.9]{Spiess} and \cite[Corollary 7.9]{BurbanDrozdMorita}). 

\begin{theorem}\label{T:NCRegular} Let $X$ be a complete regular curve over $\kk$. Then for any $\eta \in \mathsf{Br}(\KK)$ there exists a 
homogeneous   curve $\XX = (X, \kR)$ such that $\bigl[\FF_\XX\bigr] = \eta$.  If $\XX' = (X', \kR')$ is another homogeneous  curve then the following statements are equivalent:
\begin{enumerate}
\item[(a)] The categories of coherent sheaves $\Coh(\XX)$ and $\Coh(\XX')$ are equivalent.
\item[(b)] There exists an isomorphism $X \stackrel{f}\lar X'$ such that $[\FF_{\XX}] = f^\ast\bigl([\FF_{\XX'}]\bigr) \in \mathsf{Br}(\KK)$.
\end{enumerate}
\end{theorem}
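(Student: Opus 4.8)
The plan is to prove Theorem \ref{T:NCRegular} in two halves: the existence of a homogeneous curve realizing a prescribed Brauer class, and the equivalence of the two conditions (a) and (b). For the existence part, I would start from the fact that $\mathsf{Br}(\KK)$ injects into $\bigoplus_{x \in X_\circ} \mathsf{Br}(\widehat{\KK}_x)$ (residues at closed points), so a class $\eta$ is determined by finitely many local invariants. At each closed point $x$ one has a complete discrete valuation ring $\widehat{\kO}_x$ with fraction field $\widehat{\KK}_x$, and to $\eta_x \in \mathsf{Br}(\widehat{\KK}_x)$ one associates the unique maximal order $A_x$ in the corresponding division algebra; at points where $\eta$ is unramified one takes $A_x = \Mat_n(\widehat{\kO}_x)$. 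I would then glue these local maximal orders: away from the ramification locus $\kR$ agrees with a matrix algebra $\Mat_n(\kO)$, and near each bad point one uses the local description to patch. The key input is that a coherent sheaf of orders can be reconstructed from its completions at the finitely many points where it differs from a matrix order, together with its rational envelope $\FF_\XX$; the resulting $\kR$ is homogeneous by construction since each $\kR_x$ is maximal, and $[\FF_\XX] = \eta$ by design. This is exactly the content of \cite[Proposition 1.9.1]{ArtindeJong}, so I would cite it for the technical patching and only sketch the local-to-global principle.

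For the equivalence (a) $\Leftrightarrow$ (b), the direction (b) $\Rightarrow$ (a) is the easier one: an isomorphism $f \colon X \to X'$ with $f^\ast[\FF_{\XX'}] = [\FF_\XX]$ should be promoted to an isomorphism of ringed spaces $(X, \kR) \to (X', \kR')$, after possibly replacing $\kR$ by a Morita-equivalent order (which does not change $\Coh(\XX)$). Here I would use that two homogeneous orders over the same regular curve with the same rational envelope and the same Morita class at every closed point are globally Morita equivalent — again a local statement, since over a complete DVR a maximal order is determined up to Morita equivalence by the Brauer class of its rational envelope. Pulling $\kR'$ back along $f$ and comparing with $\kR$ point by point then yields an equivalence $\Coh(\XX') \xrightarrow{\sim} \Coh(\XX)$.

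The harder direction is (a) $\Rightarrow$ (b), where one must recover both the scheme $X$ and the Brauer class from the abstract category $\Coh(\XX)$. The plan is to use a Gabriel-type reconstruction: the commutative curve $X$ is recovered as the (reduced) center of the category, i.e. from the center of the abelian category $\Coh(\XX)$ one extracts a sheaf of rings whose spectrum is $X$, giving $f \colon X \xrightarrow{\sim} X'$ canonically from any equivalence $\Phi \colon \Coh(\XX') \to \Coh(\XX)$. Then one must check $f^\ast[\FF_{\XX'}] = [\FF_\XX]$; for this I would localize at the generic point — the quotient category of $\Coh(\XX)$ by its torsion subcategory is the category of finite-dimensional $\FF_\XX$-modules, and an equivalence of these over the identified function field forces the algebras $\FF_\XX$ and $f^\ast\FF_{\XX'}$ to be Morita equivalent, hence to define the same class in $\mathsf{Br}(\KK)$.

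The main obstacle I expect is precisely this reconstruction step: making the center of $\Coh(\XX)$ canonically a \emph{sheaf} over a scheme isomorphic to $X$ (rather than just extracting the global center, which for a central simple-enveloped category is just $\kk$ or $\KK$ and remembers nothing). The right move is to work with the subcategory of torsion (finite-length) objects, whose simple objects are supported at closed points of $X$, and reconstruct $X$ as the support space with its sheaf of local endomorphism centers, in the spirit of Gabriel's theorem for noetherian schemes adapted to the hereditary-order setting. Since all of this is either classical (Gabriel reconstruction) or contained in \cite{ArtindeJong, Spiess, BurbanDrozdMorita}, in the write-up I would state the reconstruction as a lemma with a reference and spend the bulk of the argument on the compatibility of Brauer classes at the generic point.
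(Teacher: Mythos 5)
The paper does not actually prove Theorem \ref{T:NCRegular}; it records it as a known fact with references to Artin--de Jong, Spie\ss{} and Burban--Drozd, so there is no internal argument to compare against. Judged on its own terms, your sketches of (a)$\Rightarrow$(b) and (b)$\Rightarrow$(a) follow the same lines as those references (reconstruction of $X$ from the blocks of the finite-length subcategory plus comparison of the Serre quotient $\Coh(\XX)/\Tor(\XX)\simeq \FF_{\XX}\mathrm{-}\mathsf{mod}$ at the generic point; a global Morita bimodule in the other direction), and delegating the technical steps to those sources is legitimate. One caveat for (b)$\Rightarrow$(a): ``Morita equivalent at every closed point'' does not formally globalize; the cited proofs construct an actual global bimodule, e.g.\ by writing any sheaf of maximal orders as $\kEnd_{\Delta}(\kE)$ for a fixed sheaf of maximal orders $\Delta$ in the underlying division algebra and a locally projective $\Delta$-module $\kE$.

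The genuine gap is in your existence argument. First, $\mathsf{Br}(\KK)$ does \emph{not} inject into $\bigoplus_{x\in X_\circ}\mathsf{Br}(\widehat{\KK}_x)$, and a class is not determined by finitely many local invariants: for $\kk=\RR$, $X=\PP^1_{\RR}$, the constant quaternion class $[\HH\otimes_{\RR}\KK]$ is unramified at every point, yet its image in $\mathsf{Br}\bigl(\RR(\!(t)\!)\bigr)$ is nontrivial at \emph{every} real point, so it does not even lie in the direct sum, and it is invisible to ``finitely many local invariants''. Second, at an unramified point the completed stalk of a maximal order is Azumaya over $\widehat{\kO}_x$ but in general is not $\Mat_n(\widehat{\kO}_x)$ (in the same example it is a matrix algebra over $\HH\otimes_{\RR}\widehat{\kO}_x$), so the proposed patching ``matrix algebras away from the ramification locus, local maximal orders at the bad points'' does not produce a sheaf of orders inside a single central simple $\KK$-algebra and cannot realize any class with nontrivial everywhere-unramified part. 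The correct argument is simpler and needs no local--global principle for Brauer groups: choose a central division (or simple) algebra $F$ with $[F]=\eta$, spread a $\KK$-basis out to a coherent sheaf of $\kO$-orders $\kR_0\subset F$, and take a sheaf of maximal overorders $\kR\supseteq\kR_0$ (maximal overorders exist over the Dedekind rings $\kO(U)$, and maximality is preserved under localization and completion). Then $\XX=(X,\kR)$ is homogeneous with $\FF_{\XX}=F$, hence $[\FF_{\XX}]=\eta$; this is exactly the construction underlying the cited \cite[Proposition 1.9.1]{ArtindeJong}.
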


\begin{remark} In the above theorem, the ringed spaces $\XX$ and $\XX'$ need not be isomorphic even if we assume $\FF$ and $\FF'$ to be skew fields; see \cite[Remark 7.11]{BurbanDrozdMorita} and references therein.
\end{remark}

Let $\XX = (X, \kH)$ be a hereditary curve. The full subcategory of finite length objects of $\Coh(\XX)$ is denoted by $\Tor(\XX)$. Clearly,  it splits into a union of blocks:
\begin{equation}\label{E:TorsionSplitting}
\Tor(\XX) = \bigvee\limits_{x \in X_\circ} \Tor_x(\XX),
\end{equation}
where $\Tor_x(\XX)$ is equivalent to the category of finite length modules over the hereditary order $\kH_x$ for any $x \in X_\circ$.

We denote by $\VB(\XX)$ the full subcategory of the category $\Coh(\XX)$ consisting of locally projective objects, i.e.~those $\kE \in \Coh(\XX)$ for which each stalk $\kE_x$ is projective over $\kH_x$ for any $x \in X_\circ$. Similarly to the case of regular commutative curves, one can show that for any 
$\kF \in \Coh(\XX)$ there exist unique $\kE \in \VB(\XX)$ and $\kZ \in \Tor(\XX)$ such that
$\kF \cong \kE \oplus  \kZ$. 

\smallskip
\noindent
Consider the Serre quotient category $\Coh(\XX)/\Tor(\XX)$. 
Then the functor $$
\Gamma(X, \kK \otimes_\kH \,-\,): \Coh(\XX)/\Tor(\XX) \lar \FF_{\XX}\mathrm{-}\mathsf{mod}
$$
is an equivalence of categories. For any $\kF \in \Coh(\XX)$ we define its rank by the formula
$$
\mathsf{rk}(\kF) := \mathsf{length}_{\FF_{\XX}}\bigl(\Gamma(X, \kK \otimes_\kH \kF)\bigr).
$$
Objects of $\VB(\XX)$ of rank one are called \emph{line bundles}, the corresponding full subcategory of $\VB(\XX)$ is denoted by $\Pic(\XX)$. 

\begin{theorem}\label{T:Characteristic} Let $\XX = (X, \kH)$ be a  hereditary curve. Then the  following results  are true.
\begin{enumerate}
\item[(a)] $\Coh(\XX)$ is an $\Ext$-finite noetherian  hereditary $\kk$-linear  abelian category.  
\item[(b)] Let $\mathit{\Omega} = \mathit{\Omega}_{\XX} := \mathit{Hom}_{X}\bigl(\kH, \mathit{\Omega}_{X}\bigr)$, where $\mathit{\Omega}_X$ is the dualizing sheaf of $X$. Then 
\begin{equation}\label{E:ARTranslate}
\tau := \mathit{\Omega} \otimes_{\kH} \, -\, : \; \Coh(\XX) \lar \Coh(\XX)
\end{equation}
is an auto-equivalence of $\Coh(\XX)$. It restricts to auto-equivalences of its full subcategories $\VB(\XX)$, $\Tor(\XX)$ as well as $\Tor_x(\XX)$ for any $x \in X$.
\item[(c)] Moreover, for any $\kF, \kG \in \Coh(\XX)$ there are bifunctorial isomorphisms
\begin{equation}\label{E:ARDuality}
\Hom_{\XX}(\kF, \kG) \cong \Ext^1_{\XX}\bigl(\kG, \tau(\kF)\bigr)^\ast.
\end{equation}
\end{enumerate}
\end{theorem}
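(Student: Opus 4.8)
\emph{Part (a).} Since $\kH$ is a coherent sheaf of $\kO_X$-orders on the noetherian integral curve $X$, it is locally a noetherian ring, module-finite over $\kO_X$; hence $\Coh(\XX)$ is an abelian, noetherian, $\kk$-linear category. To prove heredity and $\Ext$-finiteness I would run the local-to-global spectral sequence $E_2^{pq}=H^p\bigl(X,\mathcal{E}xt^q_{\kH}(\kF,\kG)\bigr)\Rightarrow\Ext^{p+q}_{\XX}(\kF,\kG)$. Two facts control it: every $\kH_x$ is hereditary, so $\mathcal{E}xt^q_{\kH}(\kF,\kG)=0$ for $q\ge 2$; and at the generic point $\FF_\XX$ is semisimple, so the coherent sheaf $\mathcal{E}xt^1_{\kH}(\kF,\kG)$ vanishes generically, hence is of finite length, whence $H^{\ge1}\bigl(X,\mathcal{E}xt^1_{\kH}(\kF,\kG)\bigr)=0$. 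Combined with Grothendieck vanishing $H^{\ge2}(X,-)=0$ (as $\dim X=1$), this leaves only $E_2^{0,0}$, $E_2^{1,0}$, $E_2^{0,1}$, giving $\Ext^{\ge2}_\XX(\kF,\kG)=0$ and a short exact sequence $0\to H^1\bigl(X,\mathcal{H}om_{\kH}(\kF,\kG)\bigr)\to\Ext^1_\XX(\kF,\kG)\to H^0\bigl(X,\mathcal{E}xt^1_{\kH}(\kF,\kG)\bigr)\to0$; finiteness over $\kk$ follows because $X$ is proper and $\mathcal{E}xt^1_{\kH}(\kF,\kG)$ has finite length.

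\emph{Part (b).} As $X$ is regular of dimension one, the coherent torsion-free $\kO_X$-module $\kH$ is locally free; hence so is $\Omega=\mathcal{H}om_{\kO_X}(\kH,\Omega_X)$, which carries a natural structure of $(\kH,\kH)$-bimodule. Invertibility of $\Omega$ as an $(\kH,\kH)$-bimodule is a local question, and after trivializing the line bundle $\Omega_X$ near a closed point $x$ it becomes the assertion that $\Omega_{\kH_x}=\mathcal{H}om_{\kO_x}(\kH_x,\kO_x)$ is an invertible $(\kH_x,\kH_x)$-bimodule --- exactly what the reduced-trace computation of Section~2 yields (there $\Omega_{\kH_x}$ is $\kH_x$-projective on both sides and $\Omega_{\kH_x}\otimes_{\kH_x}(-)$ cyclically permutes the indecomposable projectives, hence the simples). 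Therefore $\tau=\Omega\otimes_{\kH}(-)$ is exact and is an auto-equivalence of $\Coh(\XX)$, with quasi-inverse $\Omega^{-1}\otimes_{\kH}(-)$. It preserves $\VB(\XX)$ because $\Omega$ is locally $\kH$-projective; it preserves $\Tor(\XX)$ because it is exact with exact quasi-inverse; and since $\tau$ is a sheaf-local operation, $(\tau\kZ)_y=\Omega_y\otimes_{\kH_y}\kZ_y$ is supported on $\mathrm{supp}\,\kZ$, so $\tau$ preserves each block $\Tor_x(\XX)$.

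\emph{Part (c).} I would deduce Serre duality from Grothendieck duality. The restriction-of-scalars (forgetful) functor $\rho\colon\Coh(\XX)\to\Coh(X)$ admits, since $\kH$ is $\kO_X$-locally free, the exact left adjoint $\kH\otimes_{\kO_X}(-)$ and the exact right adjoint $\mathcal{H}om_{\kO_X}(\kH,-)$, and the relative dualizing complex for $\kO_X\hookrightarrow\kH$ is the $(\kH,\kH)$-bimodule $\mathcal{H}om_{\kO_X}(\kH,\kO_X)$ concentrated in degree $0$. Composing with the dualizing complex $\Omega_X[1]$ of the smooth projective curve $X/\kk$, the dualizing complex of $\XX/\kk$ is $\Omega[1]$, and since $\XX\to\Spec\kk$ is proper, Grothendieck--Serre duality applies: $D^b(\Coh(\XX))$ has Serre functor $\mathbb{S}_\XX=\tau(-)[1]=\Omega\otimes_{\kH}(-)[1]$, i.e.\ there are bifunctorial isomorphisms $\RHom_\XX(\kF,\kG)\cong\RHom_\kk\bigl(\RHom_\XX(\kG,\mathbb{S}_\XX\kF),\kk\bigr)$. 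Taking $H^0$ and using that $\Coh(\XX)$ is hereditary gives $\Hom_\XX(\kF,\kG)\cong\Hom_{D^b(\XX)}\bigl(\kG,\tau(\kF)[1]\bigr)^\ast=\Ext^1_\XX\bigl(\kG,\tau(\kF)\bigr)^\ast$, which is~(\ref{E:ARDuality}).

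\emph{Main obstacle.} The delicate point lies entirely in Part (c): one must check that the relative dualizing complex of $\kO_X\hookrightarrow\kH$ is $\mathcal{H}om_{\kO_X}(\kH,\kO_X)$ with precisely the $(\kH,\kH)$-bimodule structure used in Section~2, and that the duality isomorphism is compatible with the $\kH$-action on the correct side, so that the Serre functor is $\Omega\otimes_{\kH}(-)$ rather than $(-)\otimes_{\kH}\Omega$. If one prefers to avoid Grothendieck duality, there is a more elementary route with the same bookkeeping: for $\kF,\kG\in\VB(\XX)$, combine ordinary Serre duality on $X$ (applied to the locally free sheaf $\mathcal{H}om_{\kH}(\kF,\kG)$) with the local identity $\mathcal{H}om_{\kO_X}\bigl(\mathcal{H}om_{\kH}(\kF,\kG),\Omega_X\bigr)\cong\mathcal{H}om_{\kH}(\kG,\tau\kF)$; the torsion cases then reduce, via the splitting $\Coh(\XX)=\VB(\XX)\oplus\bigvee_{x}\Tor_x(\XX)$, to Auslander--Reiten duality for the complete hereditary orders $\kH_x$, which follows from the data $\tau S_i\cong S_{i+1}$, $\Hom_{\kH}(S_i,S_j)\cong\delta_{ij}D^\circ$, $\Ext^1_{\kH}(S_i,S_j)\cong\delta_{j,i+1}D^\circ$ of Section~2 by induction on the length of the torsion module.
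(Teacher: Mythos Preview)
Your approach is correct and aligns with the paper's own treatment: the paper gives no self-contained proof but merely remarks that the properties of $\tau$ follow from general results on dualizing complexes and Serre functors, citing \cite{NaeghvdBergh} and \cite{YekutieliZhang}, which is precisely the Grothendieck-duality machinery you invoke in Part~(c). Your sketch in fact supplies considerably more detail than the paper does; the only minor looseness is in the alternative elementary route, where the decomposition $\Coh(\XX)=\VB(\XX)\oplus\bigvee_x\Tor_x(\XX)$ holds only at the level of objects (there are nontrivial $\Hom$ and $\Ext$ between the pieces), so the mixed cases $\VB$--$\Tor$ would still need to be handled separately.
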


\noindent
\emph{Comment to the proof}. Properties of the functor $\tau$ follow from much more general results about dualizing complexes and Serre functors; see 
for example \cite[Theorem A.4]{NaeghvdBergh} and
\cite[Proposition 6.14]{YekutieliZhang}.

\begin{remark} The category of coherent sheaves $\Coh(\XX)$ on a hereditary  curve $\XX$ is essentially characterized by the properties listed in Theorem \ref{T:Characteristic} above; see \cite[Theorem IV.5.2]{ReitenvandenBergh} for the case of an algebraically closed field $\kk$ and \cite{LenzingReiten, Kussin} for further elaborations in the case of an arbitrary $\kk$.
\end{remark}

\begin{definition} Let $X$ be a complete regular curve over $\kk$. We say that $X_{\circ} \stackrel{\rho}\lar \NN$ is a \emph{weight function} if $\rho(x) = 1$ for all but finitely many points $x \in X_\circ$.  
\end{definition}

\begin{theorem}\label{T:Weighting} Let $X$ be a complete regular  curve over $\kk$,  $\eta \in \mathsf{Br}(\KK)$ be any Brauer class and $X_{\circ} \stackrel{\rho}\lar \NN$ be any weight function. 
Consider a homogeneous curve $\XX = (X, \kR)$  defined by $\eta$ (see Theorem \ref{T:NCRegular}). 
 Then there exists a  hereditary curve 
$\EE = \EE(X, \eta, \rho) =  (X, \kH)$ having the following properties.
\begin{enumerate}
\item[(a)] For any $x \in X_{\circ}$, the order $\widehat{\kH}_{x}$ is Morita equivalent to the order $H_{\rho(x)}(\kR_x)$. 
\item[(b)] We have: $[\FF_\XX] = \eta$.
\end{enumerate}
Let $(X', \eta', \rho')$ be another  datum as above and $\EE'$ be a  hereditary curve attached to it. Then the categories 
$\Coh(\EE)$ and $\Coh(\EE')$ are equivalent if and only if 
there exists an isomorphism 
$X \stackrel{f}\lar X'$ such that $f^\ast(\eta') = \eta \in \mathsf{Br}(\KK)$ and $\rho' f = \rho$.
\end{theorem}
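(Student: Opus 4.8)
The plan is to obtain $\EE$ by a local modification of the homogeneous curve $\XX=(X,\kR)$ at the finite set $P:=\{x\in X_\circ\mid\rho(x)>1\}$, and then to prove the classification statement by combining Theorem~\ref{T:NCRegular} with the observation that $\Coh(\EE)$ recovers the triple $(X,\eta,\rho)$. \emph{Construction of $\EE$ and proof of (a) and (b).} Since $P$ is finite, after replacing $\kR$ by a matrix order $M_n(\kR)$ for a suitable $n$ --- a Morita equivalence, which alters neither $\Coh(\XX)$ nor $\eta$ --- we may assume there is ``enough room'' in each $\widehat{\kR}_x$, $x\in P$, for a hereditary order of shape $(\rho(x))$. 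Set $\kH_x:=\kR_x$ for $x\notin P$; for $x\in P$, let $\kH_x\subset\kR_x$ be an $\kO_{X,x}$-order whose completion is Morita equivalent to the standard hereditary order $H_{\rho(x)}(\kR_x)$, which exists by the structure theory of hereditary orders over complete discrete valuation rings (Section~2) together with faithfully flat descent along $\kO_{X,x}\to\widehat{\kO}_{X,x}$, and which is hereditary by Lemma~\ref{L:HeredOrders}(b). By the local--global principle for orders over a Dedekind domain \cite{ReinerMO}, on each affine open $V\ni x$ meeting $P$ only in $x$ there is a unique $\kO(V)$-order $\kH(V)\subset\kR(V)$ with these prescribed localisations; these orders glue with $\kR|_{X\setminus P}$ to a coherent sheaf of $\kO_X$-orders $\kH$, and we put $\EE=\EE(X,\eta,\rho):=(X,\kH)$. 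Heredity of $\EE$ is a local condition verified above at every point, and the Morita type at $x$ is the prescribed one (unaffected by the matrix enlargement, $H_{\rho(x)}(M_n(B))$ being Morita equivalent to $H_{\rho(x)}(B)$); this gives~(a). For~(b), $\kH_x=\kR_x$ for all but finitely many $x$, hence $\kK\otimes_\kO\kH=\kK\otimes_\kO\kR$, so $\FF_\EE$ is Morita equivalent to $\FF_\XX$ as a $\KK$-algebra and $[\FF_\EE]=[\FF_\XX]=\eta$.

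\emph{Classification: sufficiency.} Suppose $f:X\xrightarrow{\sim}X'$ satisfies $f^\ast(\eta')=\eta$ and $\rho'f=\rho$. Replacing $\EE'$ by its pull-back $(X,f^\ast\kH')$ along $f$ --- which changes $\Coh(\EE')$ only up to equivalence and leaves the attached datum unchanged --- reduces us to the case $X=X'$, $\eta=\eta'$, $\rho=\rho'$; we must then show that any two hereditary curves attached to the same datum $(X,\eta,\rho)$ have equivalent categories of coherent sheaves. By the construction above, each of them is obtained from a homogeneous curve on $X$ with Brauer class $\eta$ by the local modification at $P$ governed by $\rho$. By Theorem~\ref{T:NCRegular} these homogeneous curves have equivalent categories of coherent sheaves, and inspection of the construction of the equivalence shows it may be taken compatible with $X$, hence preserving the blocks $\Tor_x$ of~(\ref{E:TorsionSplitting}); since the local modification at $x$ depends only on $\Tor_x$ and on the integer $\rho(x)$, the equivalence extends to $\Coh(\EE)\simeq\Coh(\EE')$.

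\emph{Classification: necessity.} Conversely, let $\Phi:\Coh(\EE)\xrightarrow{\sim}\Coh(\EE')$ be a $\kk$-linear equivalence. As the full subcategory $\Tor(\EE)$ of finite length objects is intrinsic, $\Phi$ restricts to $\Tor(\EE)\xrightarrow{\sim}\Tor(\EE')$ and matches the blocks of~(\ref{E:TorsionSplitting}), giving a bijection $\psi:X_\circ\xrightarrow{\sim}X'_\circ$. For $x\in X_\circ$, the number of simple objects of $\Tor_x(\EE)$ equals the number of simple $\kH_x$-modules, which is a Morita invariant of $\widehat{\kH}_x$ and hence, by~(a) and the structure theorem of Section~2, equals $\rho(x)$; therefore $\rho=\rho'\circ\psi$. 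Next, passing to Serre quotients by the torsion subcategories, $\Phi$ induces an equivalence $\FF_\EE\mathsf{-mod}\simeq\FF_{\EE'}\mathsf{-mod}$, so the central simple algebras $\FF_\EE$ and $\FF_{\EE'}$ are Morita equivalent as $\kk$-algebras; passing to centres gives a $\kk$-algebra isomorphism $\kk(X)\cong\kk(X')$, which --- the curves being complete and regular --- is induced by a unique isomorphism $f:X\xrightarrow{\sim}X'$, under which $f^\ast(\eta')=\eta$ since Morita-equivalent central simple algebras have the same Brauer class. Finally, $\psi=f|_{X_\circ}$: for $x\in X_\circ$ the local ring $\kO_{X,x}$ is recovered intrinsically from $\Coh(\EE)$ as the centre of the Serre quotient $\Coh(\EE)\big/\bigvee_{y\ne x}\Tor_y(\EE)$, compatibly with its inclusion into $\kk(X)$ (the centre of $\Coh(\EE)/\Tor(\EE)$); since $\Phi$ respects these constructions, $f$ carries $x$ to $\psi(x)$, and therefore $\rho'f=\rho$, as required.

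\emph{Expected main obstacle.} The delicate points should be, first, the descent in the construction of $\kH$ --- realising the prescribed completed local hereditary order by an honest $\kO_{X,x}$-suborder of $\kR_x$ --- and, more seriously, the identification $\psi=f|_{X_\circ}$ in the necessity part, i.e.\ reconciling the point bijection coming from the torsion part with the curve isomorphism coming from the generic part; both hinge on the local structure theory of hereditary orders recalled in Section~2 and on a Gabriel-type reconstruction of $X$ from $\Coh(\EE)$.
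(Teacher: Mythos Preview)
The paper does not give its own proof of this theorem: it simply records that a proof ``can be found in \cite[Proposition 2.9]{Spiess}; see also \cite[Corollary 7.9]{BurbanDrozdMorita}''. So there is nothing in the paper to compare your argument against beyond those pointers. Your construction of $\kH$ by local modification of a maximal order and gluing via the local--global principle for orders over a Dedekind base is the standard route (and is essentially what the cited references do), and your necessity argument --- recovering $\rho$ from the block decomposition of $\Tor(\EE)$, $\eta$ and $f$ from the Serre quotient $\Coh(\EE)/\Tor(\EE)\simeq\FF_\EE\mathsf{-mod}$, and then reconciling $\psi$ with $f$ via centres of localised Serre quotients --- is a correct Gabriel-type reconstruction, with the reconciliation step rightly flagged as the subtle one.

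The one genuine soft spot is your \emph{sufficiency} argument. You reduce to two hereditary curves $\EE,\EE'$ over the same $X$ with the same $(\eta,\rho)$, invoke Theorem~\ref{T:NCRegular} to get an equivalence between the underlying \emph{homogeneous} curves, and then assert that ``since the local modification at $x$ depends only on $\Tor_x$ and on $\rho(x)$, the equivalence extends''. This last clause is not an argument: the local modification is a modification of the sheaf of orders, not a construction performed inside $\Tor_x$, and an equivalence of abelian categories does not automatically propagate through such a modification. What is actually needed is a \emph{Morita} statement: for each $x$ the completions $\widehat{\kH}_x$ and $\widehat{\kH}'_x$ are Morita equivalent (both being Morita equivalent to $H_{\rho(x)}(\kR_x)$), and one must produce a locally projective $(\kH,\kH')$-bimodule $\kP$ that is a local progenerator at every point, so that $\kP\otimes_{\kH'}-$ gives $\Coh(\EE')\simeq\Coh(\EE)$. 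The existence of such a global $\kP$ from the local data is precisely the content of the Morita theory for non-commutative noetherian schemes developed in \cite{BurbanDrozdMorita} (and is why the paper cites that reference). Either invoke that result directly, or argue that the local Morita bimodules $\widehat{\kP}_x$ can be chosen compatibly (e.g.\ as $\Hom$-sheaves between suitable line bundles in $\Pic(\EE)$ and $\Pic(\EE')$, using \cite[Theorem 6.2]{BurbanDrozdMorita}) and then glue; but do not leave this step as ``the equivalence extends''.
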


\smallskip
\noindent
\emph{Proof} can be found in  \cite[Proposition 2.9]{Spiess}; see also \cite[Corollary 7.9]{BurbanDrozdMorita}. \qed

\begin{definition} A complete non-commutative  curve $\XX$ over a field $\kk$ is called \emph{exceptional} if its bounded derived category of coherent sheaves $D^b\bigl(\Coh(\XX)\bigr)$  admits a tilting object. Equivalently, there exists a finite-dimensional $\kk$-algebra $T$ and an exact equivalence of triangulated categories 
$
D^b\bigl(\Coh(\XX)\bigr) \lar D^b(T\mathrm{-}\mathsf{mod}).
$
\end{definition}
\begin{remark} The concept of an exceptional hereditary non-commutative curve was introduced for the first time  by Lenzing in \cite[Section 2.5]{Lenzing}, following an axiomatic characterization of such categories.  At this place let us  mention that there are various classes  of exceptional non-commutative curves which are not hereditary; see for instance \cite{BurbanDrozdAuslanderC, bdnpdalcurves, BDG}.
\end{remark}
\begin{theorem}\label{T:Lenzing} Let $\XX = (X, \kR)$ be an exceptional homogeneous  curve. Then there exists a tilting object $\kF \in \VB(\XX)$ such that 
\begin{equation}\label{E:TameBimodule}
\Lambda := \bigl(\End_{\XX}(\kF)\bigr)^\circ \cong
\left(
\begin{array}{cc}
\ff & \mathbbm{w} \\
0   & \gg \\
\end{array}
\right),
\end{equation}
where $\mathbbm{f}$ and $\mathbbm{g}$ are finite dimensional division algebras over $\kk$ and $\mathbbm{w}$ is a \emph{tame} 
$(\ff$--$\gg)$-bimodule (this means that $\dim_{\ff}(\mathbbm{w}) \cdot \dim_{\gg}(\mathbbm{w}) = 4$; see  \cite{DlabRingel}). Moreover, $g(X) = 0$.  
\end{theorem}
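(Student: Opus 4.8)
The plan is to combine the axiomatic characterisation of $\Coh(\XX)$ from Theorem \ref{T:Characteristic} with the structure theory of maximal orders recalled in Section 2, and then invoke the classification of hereditary abelian categories with a tilting object. First I would recall that, since $\XX$ is exceptional, $D^b(\Coh(\XX))$ has a tilting object; by the general theory of hereditary abelian categories (Happel, Lenzing--Reiten, Kussin), such a category with a tilting object either has nonzero objects of finite length forming tubes or it is derived-equivalent to $\mathsf{mod}$ of a finite-dimensional hereditary algebra. Because $\XX$ is homogeneous (the orders $\kR_x$ are maximal), every block $\Tor_x(\XX)$ is the category of finite-length modules over a maximal order over a complete DVR, which is Morita equivalent to the finite-length modules over that DVR itself — hence each such block is a \emph{homogeneous} tube of rank one. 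So the category $\Coh(\XX)$ has a $\PP^1$-like structure: it is a \emph{tubular family} all of whose tubes are homogeneous, parametrised by $X_\circ$, together with a ``generic part'' which is the module category of the central simple algebra $\FF_\XX$.

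Next I would produce the tilting object explicitly. Since $\tau$ (from (\ref{E:ARTranslate})) is an auto-equivalence and (\ref{E:ARDuality}) gives Auslander--Reiten--Serre duality, one knows $\Coh(\XX)$ is hereditary with Serre functor, so a tilting object must be concentrated in $\VB(\XX)$ (finite-length summands are killed by $\Hom/\Ext^1$ considerations using the homogeneous tube structure, exactly as for weighted projective lines). I would take $\kF = \kL_0 \oplus \kL_1$ where $\kL_0$ is a suitably chosen line bundle and $\kL_1 = \tau^{-}\kL_0$ or a twist of $\kL_0$ by an ample class, chosen so that $\Ext^1_\XX(\kF,\kF)=0$ and $\kF$ generates. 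The endomorphism algebra is then upper triangular of the form (\ref{E:TameBimodule}) with $\ff = \End_\XX(\kL_0)^\circ$, $\gg = \End_\XX(\kL_1)^\circ$ finite-dimensional division algebras over $\kk$ (line bundles are indecomposable with no self-extensions, and over a hereditary curve their endomorphism rings are division algebras finite over $\kk$), and $\mathbbm{w} = \Hom_\XX(\kL_0,\kL_1)$. The key numerical input is that $\dim_\ff \mathbbm{w}\cdot\dim_\gg\mathbbm{w}$ must equal $4$: this is where one uses that $X$ is a \emph{Brauer--Severi curve} of a quaternion-type situation — more precisely, one computes this product via a Riemann--Roch / Euler-form computation on $X$ using that the relevant line bundles differ by the canonical (anti-)class, and the ``$4$'' reflects that the corresponding species is of type giving a tame bimodule; equivalently one cites \cite{DlabRingel} together with the fact that a connected hereditary algebra whose module category has a $\PP^1$-family of homogeneous tubes is a tame bimodule algebra.

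Finally, $g(X)=0$: here I would argue that the existence of a tilting object forces the genus to vanish. The Grothendieck group $K_0(\Coh(\XX))$ must be free of finite rank, but for a curve of positive genus the rank-zero part already contains a copy of $\Pic^0(X)$-torsion information and, more to the point, the existence of a tilting object forces $D^b(\Coh(\XX)) \simeq D^b(T\text{-}\mathsf{mod})$ with $T$ finite-dimensional, hence $\Hom$-finite dimension vector space total; by the Riemann--Roch formula the Euler form on $\VB(\XX)$ is represented by a matrix whose determinant relates to $1-g(X)$, and positive genus makes the form degenerate in a way incompatible with a tilting (equivalently, a hereditary $\kk$-algebra has a module category whose Auslander--Reiten quiver has no regular components of the shape forced by $g>0$). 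The cleanest route is: a tilting object gives $\langle\text{--},\text{--}\rangle$ nondegenerate on $K_0$, compute it by Riemann--Roch on $X$ twisted by $\kH$, and observe the genus term obstructs nondegeneracy unless $g(X)=0$. The main obstacle I anticipate is the precise bookkeeping in this last Euler-form computation and the verification that the bimodule $\mathbbm{w}$ is genuinely tame (the ``$=4$'' rather than ``$\le 3$'' or larger) — this requires pinning down exactly which line bundles appear and controlling $\Hom$ and $\Ext^1$ between them via Serre duality and the local structure of $\kH$ at the (at most one) ramified-type point, which is the delicate arithmetic heart of the argument.
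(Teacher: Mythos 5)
The paper does not actually prove this statement: it is quoted from the literature, the tilting bundle and the tame-bimodule form of its endomorphism algebra being Lenzing's result \cite[Theorem 4.5]{Lenzing}, and $g(X)=0$ being deduced from \cite[Section 4.1]{ArtindeJong} (see also \cite{KussinMemoirs}). Your attempt to reconstruct that proof contains a genuine gap at its central step, the construction of the tilting object. First, the specific choice $\kL_1=\tau^{-}\kL_0$ can never work: by the Auslander--Reiten/Serre duality formula (\ref{E:ARDuality}) one has $\Ext^1_{\XX}\bigl(\tau^{-}\kL_0,\kL_0\bigr)\cong\Hom_{\XX}(\kL_0,\kL_0)^\ast\neq 0$, so this pair is never rigid (already on $\PP^1_\kk$ it gives $\kO\oplus\kO(2)$, which is not tilting). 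Second, and more seriously, a tilting object of an exceptional homogeneous curve need not be a direct sum of two line bundles at all, no matter how the twist is chosen. The anisotropic real conic $X_{\mathsf{qt}}$ (with $\kR=\kO$, so ``line bundle'' has its usual meaning) is a homogeneous exceptional curve whose Picard group is generated by a class of degree $2$; for any two line bundles one checks via Serre duality that either they fail to generate or $\Ext^1\neq 0$, and the actual tilting bundle is $\kO\oplus\kE$ with $\kE$ indecomposable of rank two and $\End_X(\kE)\cong\HH$ --- exactly the algebra $\Lambda_{\mathsf{qt}}$ displayed later in the paper and the quaternion algebras of \cite{KussinTilting}. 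Producing this higher-rank summand (the Auslander bundle) and showing its endomorphism ring is a division algebra with the resulting bimodule tame is precisely the nontrivial content of \cite[Theorem 4.5]{Lenzing}; your sketch replaces it with a construction that fails in these cases. Relatedly, you use that line bundles have no self-extensions before $g(X)=0$ is known, which is circular at that point in the argument.

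The remaining two ingredients are gestured at but not carried out. For the equality $\dim_{\ff}(\mathbbm{w})\cdot\dim_{\gg}(\mathbbm{w})=4$ you correctly identify the mechanism (all tubes of a homogeneous curve are homogeneous, which rules out the representation-finite case $\le 3$ and the wild case $\ge 5$), but you defer the computation, which is where the arithmetic lives. For $g(X)=0$, the $K_0$/Euler-form sketch is shaky as written: over a finite field $\Pic^0(X)$ is finite, so ``not finitely generated'' is not the obstruction, and the claimed degeneracy of the Euler form is not substantiated. A cleaner argument, closer in spirit to what \cite{ArtindeJong, KussinMemoirs} yield, is that for $g(X)\ge 1$ every nonzero coherent sheaf on $\XX$ has self-extensions (for $\kE\in\VB(\XX)$, multiplication by a nonzero section of $\Omega_X$ gives $0\neq\Hom_{\XX}(\kE,\tau\kE)\cong\Ext^1_{\XX}(\kE,\kE)^\ast$, and for torsion sheaves $\tau$ acts trivially on each homogeneous tube), so in a hereditary category no tilting object can exist. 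In summary: the overall strategy (homogeneous tubes force a tame bimodule algebra; tilting forces genus zero) is the right one, but the explicit construction you propose is incorrect and the two quantitative steps are left open, whereas the paper simply cites Lenzing and Artin--de Jong for exactly these points.
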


\noindent
\emph{Comment to the proof}. The first part of this theorem is due to Lenzing  \cite[Theorem 4.5]{Lenzing}. The statement $g(X) = 0$ can be deduced from results of  \cite[Section 4.1]{ArtindeJong}; see also \cite{KussinMemoirs}.  \qed

\smallskip
Let $(X, \eta, \rho)$ be a datum as in  Theorem \ref{T:Weighting} with $g(X) = 0$ and $\eta \in  \mathsf{Br}(\KK)$ be \emph{exceptional}. The latter condition means that homogeneous  curve $\XX = (X, \kR)$ determined  by $\eta$ is exceptional. Let $\kF \in \VB(\XX)$ be a tilting object from Theorem \ref{T:Lenzing} and $T$ be the corresponding tilted algebra (\ref{E:TameBimodule}). Then we have an exact equivalence 
$$
\sT := \mathsf{RHom}_{\XX}(\kF, \,-\,): D^b\bigl(\Coh(\XX)\bigr) \lar D^b(\Lambda\mathrm{-}\mathsf{mod}). 
$$
Let $\mathfrak{E}_{\rho} := \bigl\{x \in X_\circ \, \big| \, \rho(x) \ge 2 \bigr\} = \bigl\{x_1, \dots, x_t\bigr\}$ be the \emph{special locus} of $\rho$. 
For any $1 \le i \le t$, let $\kS_i$ be the unique (up to isomorphisms) simple object of the category $\Tor_{x_i}(\XX)$ and $U_i := \Hom_{\XX}(\kF, \kS_i) \in \Lambda\mathrm{-}\mathsf{mod}$ be the corresponding regular left $\Lambda$-module. Of course, we have: $\sT\bigl(\kS_i[0]\bigr) \cong U_i[0]$, where 
$$
\Lambda\mathrm{-}\mathsf{mod} \lar D^b(\Lambda\mathrm{-}\mathsf{mod}), \; M\mapsto M[0] = \bigl( \dots \lar 0 \lar M \lar 0 \lar \dots \bigr)
$$
is the standard embedding. For any $1 \le i \le t$, let $A_i := \widehat{\kR}_{x_i}$ and $D_i = A_i/\mathsf{rad}(A_i)$. Then  
$$
D_i^\circ \cong \End_{\XX}(\kS_i) \cong \End_{\Lambda}(U_i).
$$
Recall that the duality functor
$
\Hom_{\kk}(-\,,\kk): \, \Lambda\mathrm{-}\mathsf{mod} \lar \mathsf{mod}\mathrm{-}\Lambda
$
is a contravariant equivalence of categories.  For any $1 \le i \le t$, consider  a $(D_i$--$\Lambda)$-bimodule  $V_i := \Hom_{\kk}(U_i, \kk)$. 
In this notation, we put:
\begin{equation}\label{E:Squid}
\Pi := 
\left[
\begin{array}{ccc|c|ccc|c}
D_1 & \dots & D_1 & & 0 & \dots & 0 & V_1 \\
\vdots & \ddots & \vdots & & \vdots & \ddots & \vdots & \vdots \\
0 & \dots & D_1 & & 0 & \dots & 0 & V_1 \\
\hline
\vdots & & \vdots & \ddots & \vdots & & \vdots & \vdots \\
\hline
0 & \dots & 0 & & D_t & \dots & D_t & V_t \\
\vdots & \ddots & \vdots & & \vdots & \ddots & \vdots & \vdots \\
0 & \dots & 0 & & 0 & \dots & D_t & V_t \\
\hline
0 & \dots & 0 & & 0 & \dots & 0 & \Lambda \\
\end{array}
\right],
\end{equation}
where each $D_i$ occurs precisely $m_i:= \rho(x_i)-1$ times on the diagonal. 

\begin{theorem}\label{T:TiltingMain} Let $X$ be a complete regular curve over $\kk$ of genus zero, $\eta \in \mathsf{Br}(\KK)$ be an exceptional class,  $X_{\circ} \stackrel{\rho}\lar \NN$ a {weight function} and $\EE = \EE(X, \eta, \rho) = (X, \kH)$ be a  hereditary  curve attached to this datum (see Theorem \ref{T:Weighting}). Then there exists an exact equivalence 
\begin{equation}\label{E:TiltingEq}
D^b\bigl(\Coh(\EE)\bigr) \simeq D^b(\Pi\mathrm{-}\mathsf{mod}),
\end{equation} 
where $\Pi$ is the $\kk$-algebra given by (\ref{E:Squid}). In other words, the curve $\EE$ is exceptional. 
\end{theorem}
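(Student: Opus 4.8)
The strategy is to construct an explicit tilting complex in $D^b\bigl(\Coh(\EE)\bigr)$ whose endomorphism algebra is exactly the triangular matrix algebra $\Pi$ of (\ref{E:Squid}), and then invoke the standard derived Morita theory (Rickard / Keller) to get the equivalence (\ref{E:TiltingEq}). Since $\eta$ is assumed exceptional, the homogeneous curve $\XX = (X, \kR)$ underlying $\EE$ already carries a tilting object $\kF \in \VB(\XX)$ with tilted algebra $\Lambda$ of the tame-bimodule form (\ref{E:TameBimodule}), and we have the equivalence $\sT = \mathsf{RHom}_\XX(\kF, -): D^b\bigl(\Coh(\XX)\bigr) \to D^b(\Lambda\mathrm{-}\mathsf{mod})$. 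The task is to "lift" $\kF$ across the weight modification $\EE \to \XX$ and enrich it at the special locus $\mathfrak{E}_\rho = \{x_1, \dots, x_t\}$.

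\textbf{Key steps.} First I would relate $\Coh(\EE)$ and $\Coh(\XX)$ directly. Locally at a point $x_i$ of the special locus, $\widehat{\kH}_{x_i}$ is Morita equivalent to the standard hereditary order $H_{\rho(x_i)}(\widehat{\kR}_{x_i})$ by Theorem \ref{T:Weighting}(a), whose module category was analyzed in the Remark after the structure theorem: it has $\rho(x_i)$ simple objects $\kS_i = \kS_i^{(1)}, \kS_i^{(2)}, \dots, \kS_i^{(\rho(x_i))}$ arranged in a cycle, with $\Ext^1$ only between consecutive ones, and the indecomposable projectives $P_1^{(x_i)}, \dots, P_{\rho(x_i)}^{(x_i)}$ fitting into the short exact sequences (\ref{P:ResolutionsSimples}). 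Away from the special locus, $\Tor_x(\EE) \simeq \Tor_x(\XX)$. Second, I would produce a recollement / perpendicular-category picture: the Serre subcategory generated by the "extra" simples $\kS_i^{(2)}, \dots, \kS_i^{(\rho(x_i))}$ at each $x_i$ is a finite-length category, its right perpendicular category inside $\Coh(\EE)$ is equivalent to $\Coh(\XX)$, and there is a localization functor $\Coh(\EE) \to \Coh(\XX)$ together with a fully faithful section $\Coh(\XX) \hookrightarrow \Coh(\EE)$. Concretely this is the standard "squid"/weighted-line construction: the new tilting object is $\kF' := \kF \oplus \bigoplus_{i=1}^{t} \bigoplus_{k=1}^{m_i} \kS_i^{(k+1)}[\,\text{shift}\,]$, where $m_i = \rho(x_i) - 1$, suitably interpreted in $D^b\bigl(\Coh(\EE)\bigr)$. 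Third, I would compute all the $\Hom$ and $\Ext$ groups among these summands. The block coming from $\kF$ reproduces $\Lambda$; the blocks coming from the extra simples $\kS_i^{(k)}$ at a fixed $x_i$, being a chain under $\Ext^1$, produce upper-triangular matrix algebras over $D_i^\circ$ of size $m_i$; and the "mixed" groups between $\kF$ and the $\kS_i^{(k)}$ reproduce, via the AR-duality (\ref{E:ARDuality}) and the identification $V_i = \Hom_\kk(U_i, \kk)$, exactly the off-diagonal bimodule entries in (\ref{E:Squid}). Finally I would verify the tilting axioms for $\kF'$: $\Hom_{D^b(\EE)}(\kF', \kF'[n]) = 0$ for $n \ne 0$ (self-orthogonality, which reduces to hereditariness of $\Coh(\EE)$ together with the vanishing of the cross-$\Ext^1$ groups once the shifts are chosen correctly), and that $\kF'$ generates $D^b\bigl(\Coh(\EE)\bigr)$ as a thick subcategory (using that $\kF$ generates $D^b\bigl(\Coh(\XX)\bigr)$ plus the fact that adding back the extra simples recovers all of $\Tor(\EE)$, hence all of $\Coh(\EE)$ since $\VB(\EE)$ plus $\Tor(\EE)$ generate). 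Then Rickard's theorem gives $D^b\bigl(\Coh(\EE)\bigr) \simeq D^b(\End(\kF')^\circ\mathrm{-}\mathsf{mod}) = D^b(\Pi\mathrm{-}\mathsf{mod})$.

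\textbf{Main obstacle.} The delicate point is the bookkeeping of homological shifts and the precise form of the mixed $\Hom$/$\Ext$ entries between $\kF$ and the extra torsion simples, so that the endomorphism algebra comes out \emph{exactly} equal to $\Pi$ in (\ref{E:Squid}) rather than merely derived-equivalent to it. One must check that $\Hom_\XX(\kF, \kS_i^{(k)})$ vanishes for $k \ge 2$ while $\Ext^1_\XX(\kF, \kS_i^{(k)})$ is controlled, and translate this under $\sT$ into the statement that the $V_i$-columns attach only to the bottom $\Lambda$-corner and to the top of each $D_i$-chain; this is where the structure of the projective resolutions (\ref{P:ResolutionsSimples}) and the explicit description of $\tau$ on the $\kS_i^{(k)}$ enter. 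A secondary subtlety is making the perpendicular-category / localization statement $\Coh(\EE)^{\perp\text{-ish}} \simeq \Coh(\XX)$ rigorous at the level of ringed spaces rather than just abstract abelian categories — but this can be handled locally, order by order, using the Morita equivalence in Theorem \ref{T:Weighting}(a) and the explicit list of projectives and simples of $H_{\rho(x)}(\kR_x)$ recorded above. Everything else (hereditariness, $\Ext$-finiteness, Serre duality, noetherianity) is already supplied by Theorem \ref{T:Characteristic}.
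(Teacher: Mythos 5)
Your overall architecture (split $D^b\bigl(\Coh(\EE)\bigr)$ into a copy of $D^b\bigl(\Coh(\XX)\bigr)$ and a finite-length piece at the special locus, lift the tame-bimodule tilting bundle $\kF$, add torsion summands with a shift, compute the endomorphism algebra via the AR-duality (\ref{E:ARDuality}), then apply Rickard/Keller) is the same as the paper's, which realizes the splitting via the theory of minors: one chooses $\kP \in \Pic(\EE)$ with $\widehat{\kP}_{x_i} \cong P_i^{(0)}$, sets $\YY = (X, \kA)$ with $\kA = \bigl(\mathit{End}(\kP)\bigr)^\circ$, and gets an exact adjoint pair $(\sF, \sG)$ and a semi-orthogonal decomposition with left piece $D^b(L\mathsf{-mod})$, $L$ a product of triangular matrix algebras over the $D_i$. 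However, there is a genuine gap in your choice of torsion summands. You take the \emph{extra simples} $\kS_i^{(k)}$ (suitably shifted) as the finite-length part of the tilting object, and you claim that "a chain under $\Ext^1$" of simples produces the upper-triangular $m_i \times m_i$ block over $D_i$ in (\ref{E:Squid}). This fails: if all the simples at $x_i$ sit in one homological degree, the object is not self-orthogonal, because $\Ext^1_{\EE}\bigl(\kS_i^{(j)}, \kS_i^{(j+1)}\bigr) \cong D_i^\circ \neq 0$; if instead you stagger the shifts so that these $\Ext^1$'s land in degree $0$, then all compositions of the resulting degree-zero morphisms vanish, since they factor through $\Ext^2$ and $\Coh(\EE)$ is hereditary. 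So the diagonal block you obtain is the radical-square-zero Nakayama algebra on a linear $A_{m_i}$-quiver over $D_i^\circ$, not the hereditary triangular matrix algebra appearing in (\ref{E:Squid}). The off-diagonal entries also come out wrong: with $\widetilde\kF = \sF(\kF)$ chosen as in the paper, Serre duality gives $\Ext^1_{\EE}\bigl(\kS_i^{(j)}, \widetilde\kF\bigr) \cong \Hom_{\EE}\bigl(\widetilde\kF, \kS_i^{(j+1)}\bigr)^\ast$, which is nonzero for only \emph{one} of the $m_i$ extra simples, so the connecting bimodule is a single copy of $V_i$ per special point rather than $V_i^{\oplus m_i}$ as in (\ref{E:Squid}).

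The fix — and the paper's actual choice — is to take, at each $x_i$, not the simples but the uniserial torsion modules $Z_i^{(1)}, \dots, Z_i^{(m_i)}$ of lengths $m_i, m_i-1, \dots, 1$ with a common top, defined by the resolutions built from (\ref{P:ResolutionsSimples}); equivalently, an injective cogenerator $Z_i$ of $L_i\mathsf{-mod}$. Then $\kZ[-1] \oplus \widetilde\kF[0]$ is tilting: the single shift $[-1]$ suffices because $Z$ is a tilting module over $L$, and each $Z_i^{(j)}$ contributes one copy of $V_i$ through $\Ext^1_{\EE}(\kZ, \widetilde\kF) \cong \Hom_{\YY}\bigl(\kF, \sG(\tau^{-1}\kZ)\bigr)^\ast$, while $\bigl(\End_{L_i}(Z_i)\bigr)^\circ$ is exactly the upper-triangular block. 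Note also that your proposal, even if its object were tilting, would only produce some algebra $\Pi'$ and would still owe an argument that $D^b(\Pi'\mathsf{-mod}) \simeq D^b(\Pi\mathsf{-mod})$; and the "secondary subtlety" you defer — constructing the exact fully faithful section of the localization at the level of ringed spaces — is precisely where the existence of the line bundle $\kP$ with prescribed completions (via \cite[Theorem 6.2]{BurbanDrozdMorita}) is needed, since the correct Hom/Ext bookkeeping against the torsion part depends on that choice.
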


\begin{proof} Consider a homogeneous  curve  $\XX = (X, \kR)$  determined by $\eta \in \mathsf{Br}(\KK)$. Without loss of generality one may  assume that $\FF := \Gamma(X, \kK \otimes_\kO \kR)$ is a skew field. Then  there exists $m \in \NN$ such that $\Gamma(X, \kK \otimes_\kO \kH) \cong M_m(\FF).
$

\smallskip
\noindent
For any $1 \le i \le t$ we have an isomorphism of $\kk$-algebras 
$
H_i := \widehat{\kH}_{x_i} \cong H(A_i, \vec{p}_i),
$
where $A_i = \widehat{\kR}_{x_i}$ and $\vec{p}_i \in \NN^{\rho(x_i)}$ is some vector. In particular, 
there are 
precisely $\rho(x_i) = m_i+1$ pairwise non-isomorphic simple left $H_i$--modules $S_i^{(0)}, S_i^{(1)}, \dots, S_i^{(m_i)}$ with a cyclic ordering  such that 
\begin{equation}\label{E:ActionTau}
\tau(S_i^{(j)}) \cong S_i^{(j+1)} \; \mbox{\rm for all} \; 1 \le i \le t \; \mbox{\rm and} \; 0 \le j \le m_i. 
\end{equation}
Let $P_i^{(j)}$ be an indecomposable projective left $H_i$--module defined by (\ref{E:IndecompProjectives}) such that $$\Hom_{H_i}\bigl(P_i^{(j)}, S_i^{(j)}\bigr) \ne 0.$$
According to \cite[Theorem 6.2]{BurbanDrozdMorita} there exists $\kP \in \Pic(\EE)$ such that 
$\widehat{\kP}_{x_i} \cong P_i^{(0)}$ for all $1 \le i \le t$. Let $\kA := \bigl(\mathit{End}_{\XX}(\kP)\bigr)^\circ$. It is clear that 
$\widehat{\kA}_x \cong \widehat{\kR}_x$ for all $x \in X$ and 
$\Gamma(X, \kK \otimes_\kO \kA) \cong \FF$. It follows that $\YY := (X, \kA)$ is a complete homogeneous  curve over $\kk$ and by Theorem \ref{T:NCRegular} we have: $\Coh(\YY) \simeq \Coh(\XX)$. In particular, the curve $\YY$ is exceptional. 

Following the terminology of \cite[Definition 4.1]{Minors}, the homogeneous  curve $\YY$ is a \emph{minor} of the hereditary curve $\EE$. 
We have the following functors:
\begin{itemize}
\item $\sG := \sHom_\kH(\kP, \,-\,)$ from  $\Coh(\EE)$ to $\Coh(\YY)$.
\item $\sF := \kP \otimes_\kA \,-\,$  from $\Coh(\YY)$ to $\Coh(\EE)$.
\end{itemize}
Note that $(\sF, \sG)$ is an  adjoint pair and both functors $\sF$ and $\sG$ are exact. The general theory of minors developed in 
\cite[Section 4]{Minors} leads to the following results. 

First note that $\sF$ is fully faithful. Next,  denote by 
$\sD\sG$ and  $\sD\sF$  the corresponding derived functors between the bounded derived categories of coherent sheaves $D^b\bigl(\Coh(\EE)\bigr)$ and $D^b\bigl(\Coh(\YY)\bigr)$. Then  $(\sD\sF, \sD\sG)$ is again an adjoint pair and $\sD\sF$ is fully faithful.
 
Consider the  sheaf $\kI = \kI_{\kP}$ of two-sided ideals in $\kH$ defined as follows: 
$$
\kI := \mathit{Im}\bigl(\kP \otimes_{\kA} \kP^\vee 
\stackrel{\mathit{ev}}\lar\kH\bigr),
$$
where $\mathit{ev}$ is the evaluation morphism. 
It is clear that  $\kI_x = \kH_x$ for all $x \in X_\circ \setminus \mathfrak{E}_{\rho}$ and 
$
\overline\kH := \kH/\kI 
$ is supported at $\mathfrak{E}_{\rho}$. 
 One can check that  for any $1 \le i \le t$ we have: 
\begin{equation*}
\widehat{\kI}_{x_i} = 
\left[
\begin{array}{cccc}
A_i & J_i & \dots & J_i \\
A_i & J_i & \dots & J_i\\
\vdots & \vdots & \ddots & \vdots \\A_i  & J_i & \dots & J_i\\
\end{array}
\right]^{\underline{(p_0^{(i)}, \dots, p_{m_i}^{(i)})}}
\end{equation*}
where $\bigl(p_0^{(i)}, \dots, p_{m_i}^{(i)}\bigr) = \vec{p}_i$.  Let $L := \Gamma\bigl(X, \overline\kH\bigr)$. Then we have:
$
L \cong L_1 \times \dots \times L_t,
$
where 
$$
L_i \cong \overline{\kH}_{x_i} \cong 
\left[
\begin{array}{cccc}
D_i & 0 & \dots & 0 \\
D_i & D_i & \dots & 0\\
\vdots & \vdots & \ddots & \vdots \\
D_i  & D_i & \dots & D_i\\
\end{array}
\right]^{\underline{(p_1^{(i)}, \dots, p_{m_i}^{(i)})}} 
$$
for all $1 \le i \le t$. It is clear, that $L_i$ is Morita equivalent to the algebra
$$
\left[
\begin{array}{cccc}
D_i & 0 & \dots & 0 \\
D_i & D_i & \dots & 0\\
\vdots & \vdots & \ddots & \vdots \\
D_i  & D_i & \dots & D_i\\
\end{array}
\right] \subset M_{m_i}(D_i).
$$
For any $\kE^\bu \in D^b\bigl(\Coh(\EE)\bigr)$ we have a distinguished triangle
$$
(\sD\sF \circ\sD\sG)(\kE^\bu) \stackrel{\xi_{\kE^\bu}}\lar \kE^\bu \lar \kC^\bu_{} \lar
 (\sD\sF \circ \sD\sG)(\kE^\bu)[1],
$$
where $\sD\sF \circ\sD\sG \stackrel{\xi}\lar \mathsf{Id}$ is the counit of the adjoint pair $(\sD\sF, \sD\sG)$. Since $\sD\sF$ is fully faithful, 
the morphism $\sD\sG(\xi_{\kE^\bu})$ is an isomorphism and, as a consequence, 
$\sD\sG(\kC^\bu) = 0$. The kernel $\mathsf{Ker}(\sD\sG)$ of the functor $\sD\sG$ consists of those complexes, whose cohomology is annihilated by the sheaf of ideals $\kI$. 
Note that for any $1 \le i \le t$ the ideal $\widehat{\kI}_{x_i}$ is projective (hence, flat), viewed as a right $\widehat{\kH}_{x_i}$-module. It implies that $\mathsf{Ker}(\sD\sG)$
can be identified
with the derived category $D^b(L\mathsf{-mod})$; see  \cite[Theorem 4.6]{Minors}. Let $D^b\bigl(L\mathsf{-mod})\bigr) \stackrel{\sI}\lar 
D^b\bigl(\Coh(\EE)\bigr)$ be the corresponding fully faithful embedding, whose essential image is $\mathsf{Ker}(\sD\sG)$.
Then we  get a semi-orthogonal decomposition 
\begin{equation}\label{E:SemiOrth}
D^b\bigl(\Coh(\EE)\bigr) = \bigl\langle \mathsf{Im}(\sI), \, \mathsf{Im}(\sD\sF)\bigr\rangle = \left\langle
D^b\bigl(L\mathsf{-mod}), \,
D^b\bigl(\Coh(\YY)\bigr)
\right\rangle,
\end{equation}
see \cite[Theorem 4.5]{Minors}.
For any $1 \le i \le t$ and $1 \le j \le m_i$ consider the following $L_i$-modules
$Z_i^{(j)}$ given in terms of their projective resolutions
$$
\left\{
\begin{array}{l}
0 \lar P_i^{(0)} \lar P_i^{(1)} \lar Z_i^{(1)} \lar 0 \\
0 \lar P_i^{(m_i)} \lar P_i^{(1)} \lar Z_i^{(2)} \lar 0 \\
\vdots \\
0 \lar P_i^{(2)} \lar P_i^{(1)} \lar Z_i^{(m_i)} \lar 0. \\
\end{array}
\right.
$$
Note that $Z_i := \bigoplus\limits_{j = 1}^{m_i} Z_i^{(j)}$ is an injective cogenerator of the category $L_i\mathsf{-mod}$. Let
$
Z := \bigoplus\limits_{i = 1}^{t} Z_i
$
and $\kZ[0] := \sI(Z)$, then we have:  $\kZ \in \Tor(\XX)$. Next, we set $\widetilde\kF := \sF(\kF) \in \VB(\EE)$, where $\kF \in \VB(\YY)$ is a tilting object from Theorem \ref{T:Lenzing}. We claim that
\begin{equation}
\kX^\bu:= \kZ[-1] \oplus \widetilde{\kF}[0]
\end{equation}
is a tilting object in  the derived category $D^b\bigl(\Coh(\EE)\bigr)$.

The statement  that $\kX^\bu$ generates $D^b\bigl(\Coh(\EE)\bigr)$ follows from existence of  a semi-orthogonal decomposition 
(\ref{E:SemiOrth})
and the facts that $Z$ generates $D^b\bigl(L\mathsf{-mod}\bigr)$ and
$\kF$ generates $D^b\bigl(\Coh(\YY)\bigr)$. Since both functors $\sI$ and $\sD\sF$ are fully faithful and $Z$ and $\kF$ are tilting objects in the  corresponding derived categories, we have:
$$
\Ext^i_{\EE}(\kZ, \kZ) = 0 = \Ext^i_{\EE}(\widetilde{\kF}, \widetilde{\kF})
$$
for all $i \ge 1$. Since the functor $\sD\sF$ is left adjoint to $\sD\sG$ and $\sD\sG(\kZ) =  0$, we have:
$$
\Ext^i_{\EE}(\widetilde{\kF}, \kZ) \cong  \Hom_{D^b(\EE)}\bigl(\sD\sF(\kF), \kZ[i]\bigr) \cong \Hom_{D^b(\YY)}\bigl(\kF, \sD\sG(\kZ)[i]\bigr) = 0 \quad \mbox{\rm for all} \; i \in \ZZ.
$$
This vanishing is also a consequence of the semi-orthogonal decomposition (\ref{E:SemiOrth}).
Finally, for any $i \in \ZZ$ we have:
$
\Ext^i_{\EE}(\kZ, \widetilde{\kF}) \cong \Gamma\bigl(X, \sExt^i_{\kH}(\kZ, \widetilde{\kF})\bigr).
$
Since $\kZ$ is torsion and $\widetilde{\kF}$ is locally projective, we have: $\sHom_{\kH}(\kZ, \widetilde{\kF}) = 0$. As  $\EE$ is hereditary, we also have:  $\sExt^{i}_{\kH}(\kZ, \widetilde{\kF}) = 0$ for all $i \ge 2$. Therefore, $\Hom_{D^b(\EE)}\bigl(\kX^{\bu}, \kX^{\bu}[i]\bigr) = 0$
for $i \ne 0$. We have shown that $\kX^{\bu}$ is a tilting object in 
$D^b\bigl(\Coh(\EE)\bigr)$.  Put  
\begin{equation}\label{E:Tilted2}
\Pi:=
 \bigl(\End_{D^b(\EE)}(\kX^{\bu})\bigr)^\circ
\cong
\left( 
 \begin{array}{cc}
 \bigl(\End_{\EE}(\kZ)\bigr)^\circ & \Ext^1_{\EE}(\kZ, \widetilde\kF) \\
 0 & \bigl(\End_{\EE}(\widetilde\kF)\bigr)^\circ
 \end{array}
\right).
\end{equation} Then  the triangulated categories $D^b\bigl(\Coh(\EE)\bigr)$ and $D^b(\Pi\mathsf{-mod})$ are equivalent; see \cite{Keller}.
 
\smallskip
\noindent
Note that $
\bigl(\End_{\EE}(\widetilde{\kF})\bigr)^\circ \cong \bigl(\End_{\YY}(\kF)\bigr)^\circ = \Lambda
$ and 
$
\End_{\EE}(\kZ) \cong \End_{L}(Z) \cong \prod\limits_{i = 1}^t \End_{L_i}(Z_i).$
An easy computation shows that
$$
\bigl(\End_{L_i}(Z_i)\bigr)^\circ \cong 
\left[
\begin{array}{cccc}
D_i & D_i & \dots & D_i \\
0 & D_i & \dots & D_i\\
\vdots & \vdots & \ddots & \vdots \\
0  & 0 & \dots & D_i\\
\end{array}
\right] \subset M_{m_i}(D_i).
$$
Finally, using the Auslander--Reiten duality formula (\ref{E:ARDuality}) and the fact that $(\sF, \sG)$ is an adjoint pair, we get binatural isomorphisms
$$
\Ext^1_{\EE}\bigl(\kZ, \widetilde\kF\bigr) \cong \Hom_{\EE}\bigl(\sF(\kF), \tau^{-1}(\kZ)\bigr)^\ast \cong
 \Hom_{\YY}\bigl(\kF, 
\sG\bigl(\tau^{-1}(\kZ)\bigr)\bigr)^\ast.
$$ 
 Next, we have:
 $
 \sG\bigl(\tau^{-1}(\kZ)\bigr) \cong \bigoplus\limits_{i = 1}^t \kS_{i}^{\oplus m_i}
 $
 where $\kS_i$ is the unique (up to isomorphism) simple object of the category $\Tor_{x_i}(\YY)$. Hence, we get  isomorphisms 
 $$\Hom_{\YY}\bigl(\kF, 
\sG\bigl(\tau^{-1}(\kZ)\bigr)\bigr) \cong 
\bigoplus\limits_{i = 1}^t \Hom_{\YY}\bigl(\kF, 
\kS_i\bigr)^{\oplus m_i} \cong 
\bigoplus\limits_{i=1}^t U_i^{\oplus m_i}.
$$
Taking the duals over $\kk$, we  get a bimodule  isomorphism
$
\Ext^1_{\EE}\bigl(\kZ, \widetilde\kF\bigr) \cong \bigoplus\limits_{i=1}^t V_i^{\oplus m_i}.
$
This implies that  the $\kk$-algebras  given by (\ref{E:Squid}) and (\ref{E:Tilted2}) are isomorphic.
\end{proof}

\begin{remark} Let $\kk$ be an algebraically closed field and  $X = \PP^1_{\kk}$. We chose homogeneous coordinates $(u: v)$ on $X$. Then
$\kF := \kO(-1) \oplus \kO \in \VB(X)$ is a tilting bundle  $u$ and $v$ can be viewed as elements of a distinguished basis of $\Hom_{X}\bigl(\kO(-1), \kO\bigr)$. Hence, $\Lambda := \bigl(\End_{X}(\kF)\bigr)^\circ$ can be identified with the path algebra of the Kronecker quiver $
\xymatrix{
\bu  \ar@/^/[r]^{u} \ar@/_/[r]_{v}  & \bu
}
$
and we have an exact equivalence 
$
\sT := \mathsf{RHom}(\kF, \,-\,): \;D^b\bigl(\Coh(X)\bigr) \lar D^b(\Lambda\mathrm{-}\mathsf{mod}).
$

Let $X_\circ \stackrel{\rho}\lar \NN$ be any weight function and 
$\mathfrak{E}_\rho = \left\{x_1, \dots, x_t\right\}$ be the corresponding special locus. We write $x_i = (\alpha_i : \beta_i)$ for all $1 \le i \le t$. Let $\kS_i \in \Tor_{x_i}(X)$ be the simple object and $U_i \in \Lambda\mathsf{-mod}$ be its image under the equivalence $\sT$ (i.e.~$\sT\bigl(\kS_i[0]\bigr) \cong U_i[0])$. Then $U_i = 
\xymatrix{
\kk  \ar@/^/[r]^{\alpha_i} \ar@/_/[r]_{\beta_i}  & \kk
}
$
and $\End_{\Lambda}(U_i) \cong \kk$ for all $1 \le i \le t$. Let $\rho(x_i) = m_i +1$. Then the algebra $\Pi$ defined by (\ref{E:Squid}) is isomorphic to the path algebra of the following quiver
\begin{equation}\label{E:TrueSquid}
\begin{array}{c}
\xymatrix{
                                    &     & \bu  \ar[r] & \bu \dots \bu \ar[r]^-{c_1^{(m_1)}} & \bu  \\
\bu  \ar@/^/[r]^{u} \ar@/_/[r]_{v}  & \bu \ar[r]^-{c_{i}^{(1)}} 
\ar[ru]^-{c_{1}^{(1)}} \ar[rd]_-{c_{t}^{(1)}} & \bu  \ar[r] & \bu \dots   \bu \ar[r]^-{c_i^{(m_i)}} & \bu \\
                                    &     & \bu \ar[r] & \bu  \dots  \bu \ar[r]^-{c_t^{(m_t)}} & \bu }
                                    \end{array}
\end{equation}
subject to the relations $c_i^{(1)}(\beta_i u - \alpha_i v) = 0$ for all $1 \le i \le t$. This is a so-called \emph{squid algebra} (see \cite[Section 
IV.6]{BrennerButler} and \cite[Section 4]{Ringel}). The \emph{canonical algebra} $\Sigma$ 
attached to  the same datum  $\bigl((x_1, m_1+1), \dots, (x_t, m_{t}+1)\bigr)$ is the path algebra of the quiver
\def\bvd{\mbox{\huge$\boldsymbol{\vdots}$}}
\begin{equation}\label{E:Canonical}
\begin{array}{c}
 \xymatrix@R=1em{ 
 && \bu \ar[r]^{d_{1}^{(2)}} & \bu \ar[r] &\dots \ar[r] & \bu \ar[rrdd]^{d_{1}^{(m_1)}} 
 \\
 				  &&  \bu \ar[r]^{d_{2}^{(2)}} & \bu \ar[r] &\dots \ar[r] & \bu \ar[rrd]^(.2){d_{2}^{(m_2)}} \\
 				  \bu  \ar[uurr]^{d_{1}^{(1)}} \ar[urr]^(.8){d_{2}^{(1)}} 
 				  \ar[ddrr]_{d_{t}^{(1)}} \ar@/^/[rrrrrrr]^u \ar@/_/[rrrrrrr]_v &&&&&&& \bu \\
 					&& \bvd & \bvd & & \bvd \\
 				  && \bu  \ar[r]^{d_{t}^{(2)}} & \bu\ar[r] &\dots \ar[r] & \bu  \ar[rruu]_{d_{t}^{(m_t)}}
 				  }
 				  \end{array}
\end{equation}
modulo the relations 
\begin{equation}\label{E:CanonicalRel}
d_{i}^{(m_i)} \dots d_{i}^{(1)}=\beta_{i} u - \alpha_{i} v \quad  \mbox{\rm for}\quad 
 1 \le i \le t,
\end{equation} 
see \cite{RingelTameAlgebras}.  Then there exists an exact equivalence of triangulated categories
\begin{equation}\label{E:EquivalenceCanonicalSquid}
D^b\bigl(\Pi\mathsf{-mod}\bigr) \simeq D^b\bigl(\Sigma\mathsf{-mod}\bigr),
\end{equation}
see \cite{RingelTameAlgebras, Ringel}. For $t \ge 3$ one may without loss of generality assume that  $x_1 = (1:0)$, $x_2 = (0: 1)$ and $x_3 = (1:1)$. Suppose  that $t=3$. If $l_i = \rho(x_i)$ then we use the notation $\Pi_{(l_1, l_2, l_3)}$ and $\Sigma_{(l_1, l_2, l_3)}$ for the corresponding squid and canonical algebras, respectively. 

In the case of an arbitrary field $\kk$,  the algebra $\Pi$ given by (\ref{E:Squid}) is a variation of a squid algebra  introduced by Ringel in  \cite[Section 4]{Ringel}.
\end{remark}

\begin{remark} Let us mention that Theorem \ref{T:TiltingMain} is not entirely  original; see e.g.~\cite[Theorem 2.8 and Theorem 3.4]{HappelReiten} as well as  \cite{LenzingdelaPena, KussinMemoirs}. However, that works are based on the  ``axiomatic  approach'' to non-commutative hereditary curves and analogues of the derived equivalence (\ref{E:TiltingEq}) serve rather as a definition of $\EE$ than as its property.
\end{remark}

\section{Generalities on skew group products}
Let $A$ be a ring, $G$ be a finite group and $G \stackrel{\phi}\lar \Aut(A)$ be a group homomorphism. For any $g \in G$, let $A \stackrel{\phi_g}\lar A$ be the corresponding ring automorphism of $A$. The associated  skew group ring  $A\bigl[G, \phi\bigr]$ is a free left $A$--module of rank $|G|$ 
\begin{equation}
A\bigl[G, \phi\bigr] = \Bigl\{\sum\limits_{g \in G} a_g[g]  \, \big| a_g \in A  \Bigr\}
\end{equation}
equipped with the product given by the rule
$$
a[f] \cdot b[g] := a \phi_f(b)  [fg] \; \mbox{\rm for any} \; a, b \in  
A \; \mbox{\rm and} \;  f, g \in G.
$$
Then  $A\bigl[G, \phi\bigr]$ is a unital ring, whose multiplicative unit element is $1[e]$, where $1$ is the unit in $A$ and $e$ is the neutral element of $G$.  Let
$$
A^G := \left\{a \in A \, \big| \, \phi_g(a) = a \; \mbox{\rm for all} \, g \in G  \right\}
$$
be the ring of invariants. If $A$ is commutative then $A^G$ is the center of 
$A\bigl[G, \phi\bigr]$.  In what follows, we put $n = |G|$. 

\begin{lemma}\label{L:SkewGroupOfFields} Let $L$ be a field, $G \stackrel{\phi}\lar \Aut(A)$ be injective and $K = L^G$. Then we have an isomorphism of $K$-algebras 
\begin{equation}
L\bigl[G, \phi\bigr] \cong M_n(K).
\end{equation}
\end{lemma}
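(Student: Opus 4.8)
The plan is to recognize $L[G,\phi]$ as a classical crossed product (with trivial cocycle) of the Galois extension $L/K$, and invoke the standard fact that such crossed products are matrix algebras. Concretely, $G \stackrel{\phi}\lar \Aut(L)$ injective means $L/K$ is a finite Galois extension with Galois group $G$, where $K = L^G$; this is Artin's theorem on fixed fields, which also gives $[L:K] = |G| = n$. The skew group ring $L[G,\phi]$ is then precisely the crossed product algebra $(L/K, G, 1)$ with the trivial factor set, whose class in the Brauer group $\mathsf{Br}(K)$ is the identity. Since a central simple $K$-algebra of dimension $n^2$ that is split (trivial Brauer class) is isomorphic to $M_n(K)$, we are done.

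**First I would** check that $L[G,\phi]$ is a central simple $K$-algebra of the right dimension. That $K = L^G = Z(L[G,\phi])$ when $L$ is a field and $\phi$ is injective is essentially asserted already in the paragraph preceding the lemma (for commutative $A$, $A^G$ is the center); one verifies $\sum a_g[g]$ is central iff it commutes with all of $L$ (forcing $a_g = 0$ for $g \ne e$ by injectivity/linear independence of the $\phi_g$) and with all $[h]$ (forcing $a_e \in L^G$). The $K$-dimension is $\dim_K L[G,\phi] = \dim_K L \cdot n = n \cdot n = n^2$. For semisimplicity/simplicity one can argue directly: a two-sided ideal $I \ne 0$, intersected with the $L$-span structure, must contain a nonzero element of minimal support; a commutator trick with elements of $L$ (using that distinct $\phi_g$ take different values somewhere) shortens the support, forcing $1[e] \in I$. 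Alternatively, simplicity is immediate from the crossed-product description. Then by the Artin--Wedderburn theorem $L[G,\phi] \cong M_m(\Delta)$ for a division $K$-algebra $\Delta$.

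**The cleanest route** to finish — the one I would actually write — is to exhibit a faithful $L[G,\phi]$-module of $K$-dimension $n$, namely $L$ itself, with $[g]$ acting as $\phi_g$ and $a \in L$ acting by multiplication. This is a ring homomorphism $L[G,\phi] \lar \End_K(L) \cong M_n(K)$ by the defining multiplication rule, and it is injective because $L[G,\phi]$ is simple and the map is nonzero. Counting dimensions, $\dim_K L[G,\phi] = n^2 = \dim_K M_n(K)$, so the injection is an isomorphism. The only genuine input here is Dedekind's lemma on linear independence of characters (equivalently, of the automorphisms $\phi_g$), which gives both that the representation on $L$ is faithful directly and that $K = L^G$ has index exactly $n$.

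**The main obstacle** is not deep but needs care: establishing that the action map $L[G,\phi] \to \End_K(L)$ is well-defined as a $K$-algebra map and is injective. Well-definedness is a direct check against $a[f]\cdot b[g] = a\phi_f(b)[fg]$. Injectivity could be done bare-handed via linear independence of automorphisms (if $\sum_g a_g \phi_g = 0$ as a function $L \to L$ with not all $a_g$ zero, Dedekind's lemma is contradicted), which avoids even having to prove simplicity separately — so I would take that path, making the proof self-contained modulo Dedekind's lemma and a dimension count. Everything else (centrality over $K$, the value of $[L:K]$) is either already granted in the text or a one-line consequence.
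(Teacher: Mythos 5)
Your proposal is correct, but the finish you say you would actually write is genuinely different from the paper's. The paper proves the lemma by citation of crossed-product theory: by Artin's theorem $L/K$ is a finite Galois extension with group $G$, the skew group ring is the crossed product $L[G,(\phi,\omega)]$ with trivial cocycle $\omega$, and the isomorphism $H^2(G,L^\ast)\cong\mathsf{Br}(L/K)$ shows its Brauer class is trivial, whence $L[G,\phi]\cong M_m(K)$ for some $m$, with $m=n$ by comparing dimensions. Your first paragraph reproduces exactly this, but your preferred route is the elementary one: represent $L[G,\phi]$ on $L$ itself via $\sum_g a_g[g]\mapsto\sum_g a_g\phi_g$, check against the multiplication rule $a[f]\cdot b[g]=a\phi_f(b)[fg]$ that this is a homomorphism of $K$-algebras into $\End_K(L)$, obtain injectivity from Dedekind's linear independence of the distinct automorphisms $\phi_g$ (this is precisely where injectivity of $\phi$ enters), and conclude by the dimension count $\dim_K L[G,\phi]=n\cdot[L:K]=n^2=\dim_K\End_K(L)$, using Artin's theorem for $[L:K]=n$. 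This is a valid, self-contained argument that avoids both the cohomological description of the relative Brauer group and Artin--Wedderburn, and it produces a completely explicit isomorphism; what the paper's citation-based proof buys is brevity and consistency with the crossed-product and Brauer-group language used elsewhere in the paper (where nontrivial cocycles and nontrivial Brauer classes do occur). One small attribution slip: Dedekind's lemma by itself only yields $[L:K]\ge n$; the equality $[L:K]=n$ is Artin's theorem, which you do invoke in your first paragraph, so nothing essential is missing.
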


\begin{proof} By Artin's Theorem (see e.g.~\cite[Theorem VI.1.8]{Lang}) $L/K$ is a finite Galois extension and $G \cong \mathsf{Gal}(L/K)$.  Next, we have a group isomorphism 
\begin{equation}\label{E:RelativeBrauer}
H^2(G, L^\ast) \stackrel{\cong}\lar \mathsf{Br}(L/K), [\omega] \mapsto L[G, (\phi, \omega)]
\end{equation}
see e.g.~\cite[Theorem 5.6.6]{DrozdKirichenko}. Here, $L[G, (\phi, \omega)]$
is the crossed product of $L$ and $G$ with respect to the two-cocycle 
$G \times G \stackrel{\omega}\lar L^\ast$; see \cite{ReitenRiedtmann}. If $\omega$ is the trivial cocycle then $L[G, (\phi, \omega)] = L[G, \phi]$. Hence, we have an isomorphism of $K$-algebras $L[G, \phi] \cong M_m(K)$ for some $m \in \NN$. From the dimension reasons it follows that $m = n$.
\end{proof}

\begin{lemma}\label{L:ReductionSkewGroupAction} Let $A = A_1 \times \dots \times A_t$, where $A_i$ is a connected ring for all $1 \le i \le t$. Let $e_i := (0, \dots, 0, 1, 0, \dots, 0)$ be the $i$-th central idempotent of $A$. Assume that $G$ acts transitively on the set $\bigl\{e_1, \dots, e_t\bigr\}$. Let $A_{\diamond} = A_1$, $G_\diamond$ be the stabilizer of $e_1$ and $G_\diamond \stackrel{\phi_\diamond}\lar \Aut(A_\diamond)$ be the restricted  action. 
Then the skew group rings  $A\bigl[G, \phi\bigr]$ and  $A_\diamond\bigl[G_\diamond, \phi_\diamond\bigr]$ are Morita equivalent. 
\end{lemma}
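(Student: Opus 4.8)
The plan is to exhibit $A_\diamond\bigl[G_\diamond,\phi_\diamond\bigr]$ as the corner ring $\varepsilon B\varepsilon$ of $B := A\bigl[G,\phi\bigr]$ attached to a \emph{full} idempotent $\varepsilon$, and then to invoke the standard Morita theory of full idempotents. The only input about the $A_i$ that is needed is that each $\phi_g$ permutes the set $\bigl\{e_1,\dots,e_t\bigr\}$: a ring automorphism of $A$ carries central idempotents to central idempotents and preserves the (unique) decomposition of $A$ into connected factors, so $\phi_g(e_i) \in \bigl\{e_1,\dots,e_t\bigr\}$ for all $i$, and by hypothesis $G$ acts on $\bigl\{e_1,\dots,e_t\bigr\}$ transitively.

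First I would set $\varepsilon := e_1[e] \in B$. This is an idempotent, since $\varepsilon^2 = e_1\,\phi_e(e_1)[e] = e_1[e] = \varepsilon$. For an arbitrary $b = \sum_{g\in G} a_g[g] \in B$ a direct computation gives
\[
\varepsilon b\varepsilon \;=\; \sum_{g\in G} e_1\,a_g\,\phi_g(e_1)\,[g].
\]
Because the $e_i$ are orthogonal central idempotents, $e_1 A\,\phi_g(e_1) = e_1 A e_1 = A_\diamond$ when $\phi_g(e_1) = e_1$, i.e.\ $g \in G_\diamond$, and $e_1 A\,\phi_g(e_1) = 0$ otherwise. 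Hence, as a set, $\varepsilon B\varepsilon = \bigl\{\sum_{g\in G_\diamond} b_g[g] \,\big|\, b_g\in A_\diamond\bigr\}$. For $f\in G_\diamond$ the automorphism $\phi_f$ fixes $e_1$, hence maps $A_\diamond = e_1A$ to itself and restricts there to $\phi_{\diamond,f}$; so the multiplication $b[f]\cdot c[g] = b\,\phi_f(c)[fg]$ on $\varepsilon B\varepsilon$ is exactly that of $A_\diamond\bigl[G_\diamond,\phi_\diamond\bigr]$. This yields a ring isomorphism $\varepsilon B\varepsilon \cong A_\diamond\bigl[G_\diamond,\phi_\diamond\bigr]$.

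Next I would verify that $\varepsilon$ is full, i.e.\ $B\varepsilon B = B$. Using transitivity, pick for each $1\le i\le t$ an element $g_i\in G$ with $\phi_{g_i}(e_1) = e_i$ (with $g_1 = e$). Writing $[g] := 1[g]$, one has $[g]^{-1} = [g^{-1}]$ and $[g_i]\,\varepsilon\,[g_i^{-1}] = \phi_{g_i}(e_1)\,\phi_{g_i}(1)[e] = e_i[e]$. Thus every $e_i[e]$ lies in $B\varepsilon B$, whence $1[e] = \bigl(\sum_i e_i\bigr)[e] = \sum_i e_i[e] \in B\varepsilon B$, so $B\varepsilon B = B$.

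Finally, a full idempotent produces a Morita equivalence in the usual way: $B\varepsilon$ is a finitely generated projective generator of $B\text{-}\mathsf{mod}$ with endomorphism ring $(\varepsilon B\varepsilon)^\circ$, and the functors $\varepsilon\,(-) \cong \Hom_B(B\varepsilon,-)$ and $B\varepsilon\otimes_{\varepsilon B\varepsilon}(-)$ are mutually inverse equivalences between $B\text{-}\mathsf{mod}$ and $(\varepsilon B\varepsilon)\text{-}\mathsf{mod}$ (standard Morita theory; see e.g.\ \cite{DrozdKirichenko}). Combining this with the isomorphism $\varepsilon B\varepsilon\cong A_\diamond\bigl[G_\diamond,\phi_\diamond\bigr]$ of the second step finishes the proof. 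I do not anticipate a genuine obstacle: the one point that must be handled with care is the bookkeeping in the corner-ring computation — in particular checking that for $f\in G_\diamond$ the restriction of $\phi_f$ to $A_\diamond$ is precisely $\phi_{\diamond,f}$ and that $G_\diamond$ is closed under multiplication — and the rest is formal.
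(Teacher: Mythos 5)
Your proof is correct and follows essentially the same route as the paper: both pass to the corner ring at the idempotent $e_1[e]$, use the conjugates $[g_i]\,e_1[e]\,[g_i^{-1}] = e_i[e]$ (equivalently, that the orthogonal, pairwise conjugate idempotents sum to $1$) to get the Morita equivalence of $A[G,\phi]$ with that corner, and identify the corner with $A_\diamond[G_\diamond,\phi_\diamond]$ by a direct computation. Your explicit verification that $e_1[e]$ is a full idempotent is just a spelled-out version of the step the paper states more briefly.
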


\begin{proof} By the transitivity assumption, for any $1 \le i, j \le t$ there exists $g \in G$ such that $\phi_g(e_i) = e_j$. Then we have: 
$e_j = [g] e_i [g]^{-1}$. Since $1_{A[G, \phi]} = e_1 + \dots + e_t$ and the 
idempotents $\bigl\{e_1, \dots, e_t\bigr\}$  are orthogonal and pairwise conjugate, the rings
$A[G, \phi]$ and $e_1 A[G, \phi] e_1$ are Morita equivalent. Now we prove that $e_1 A[G, \phi] e_1 \cong  A_\diamond\bigl[G_\diamond, \phi_\diamond\bigr]$.
Let $\bigl\{g_1, \dots, g_s\bigr\} \subset G$ be such that $g_1 = e$ and $G = g_1 G_\diamond \sqcup \dots  \sqcup g_s G_\diamond$. Consider an arbitrary element $A \ni a = (a_1, \dots, a_t) = a_1 + \dots + a_t$, where $a_i \in A_i$ for $1 \le i \le t$ as well as an arbitrary element $g \in G$. First note that $e_1 a = a_1$. Next, there exist unique $1 \le j \le s$ and $h \in G_\diamond$ such that $g = g_j h$. Then we have: $\phi_h(e_1) = e_1$ and 
$$
e_1 \cdot a[g] e_1 = a_1 [g_j h] e_1 = a_1 \phi_{g_j}(e_1) [g_j h] = 
\left\{
\begin{array}{cc}
a_1 [h] & \mbox{if}\;  j = 1 \\
0       & \mbox{otherwise}.
\end{array}
\right.
$$
Hence, $e_1 A[G, \phi] e_1 \cong  A_\diamond\bigl[G_\diamond, \phi_\diamond\bigr]$, as asserted. 
\end{proof}

\smallskip
\noindent
From now on, let $\kk$ be a field such that $\mathsf{gcd}\bigl(n, \mathsf{char}(\kk)\bigr) = 1$, $A$ be a $\kk$-algebra and $G \stackrel{\phi}\lar \Aut_{\kk}(A)$ be a group homomorphism. Then the skew product 
$A[G, \phi]$ is a $\kk$-algebra. 

\begin{theorem}\label{T:SkewGroupOrders} Let $A$ be a commutative connected Dedekind $\kk$-algebra, $O = A^G$ and $H = A[G, \phi]$. Then the following statements are true.
\begin{enumerate}
\item[(i)] $O$ is again a Dedekind $\kk$-algebra and $O \subseteq A$ is a finite extension. 
\item[(ii)] $H$ is a hereditary order, whose center is $O$ and whose rational hull is $M_n(K)$, where $K$ is the quotient field of $O$. 
\end{enumerate}
\end{theorem}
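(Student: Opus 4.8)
The plan is to reduce both parts to two facts that hold precisely because $n$ is invertible in $\kk$: that the invariant subring $O = A^G$ is well behaved (part (i)), and that $A \hookrightarrow H$ is a separable ring extension (the global–dimension half of part (ii)). I will assume, as I may — and as holds in all applications — that $\phi$ is injective; this is what makes the rational hull a matrix algebra. As a preliminary step I record: $\kk \subseteq O$, so $O$ is a $\kk$-subalgebra of $A$; each $a \in A$ is a root of the monic polynomial $\prod_{g\in G}\bigl(X - \phi_g(a)\bigr) \in O[X]$, so $A$ is integral over $O$ and $O$ is a domain; and, writing $L := \mathsf{Quot}(A)$, the $G$-action extends to $L$ with $L^G = \mathsf{Quot}(O) =: K$ (a $G$-invariant fraction $a/s$ with $s \in O$ has $a \in A^G$) and $K \otimes_O A \cong L$, since every nonzero $b \in A$ divides $\prod_g \phi_g(b) \in O$. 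By Artin's theorem $L/K$ is then Galois with group $G$ and $[L:K] = n$.

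For part (i) the workhorse is the Reynolds operator $\mathsf{tr}\colon A \to O$, $a \mapsto \frac1n\sum_g \phi_g(a)$, an $O$-linear retraction of $O \hookrightarrow A$. I will use it three times. First, $O$ is Noetherian: an ascending chain of ideals $\mathfrak a_i \subseteq O$ yields a chain $\mathfrak a_i A$ that stabilizes in the Noetherian ring $A$, and $\mathfrak a_i A \cap O = \mathfrak a_i$ (apply $\mathsf{tr}$ to a member of the intersection, using $O$-linearity), so the original chain stabilizes. Second, $O$ is integrally closed in $K$: an element of $K$ integral over $O$ is integral over $A$, hence lies in $A$ (integrally closed in $L$), hence in $A\cap K = O$. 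Third, by integrality $\dim O = \dim A = 1$. Thus $O$ is a Dedekind $\kk$-algebra. Module–finiteness of $A$ over $O$ I get from the trace form: a $K$-basis $a_1,\dots,a_n$ of $L$ contained in $A$ has a trace–dual basis $a_i^\ast$, and for $a\in A$ the coefficients $\mathsf{Tr}_{L/K}(a a_i) = \sum_g \phi_g(a a_i)$ lie in $A\cap K = O$, so $A \subseteq \sum_i O a_i^\ast$, a finitely generated module over the Noetherian ring $O$.

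For part (ii), $H = \bigoplus_{g\in G} A[g]$ is free of rank $n$ as a left $A$-module, hence finitely generated and torsion free over $O$ by (i), and $O[e]$ is central in $H$ since $b[g]\cdot o[e] = b\phi_g(o)[g] = o\,b[g]$ for $o \in O$. Base change gives $H_K := K\otimes_O H \cong L[G,\phi] \cong M_n(K)$ by Lemma \ref{L:SkewGroupOfFields} (using that $G$ acts faithfully on $L$), a simple $K$-algebra of finite $K$-dimension; so $H$ is an $O$-order with rational hull $M_n(K)$, and $Z(H) \subseteq H \cap Z(H_K) = H \cap K = O[e]$ forces $Z(H) = O$. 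For hereditariness I will verify that $A \hookrightarrow H$ is a separable ring extension, with separability element $\frac1n\sum_{g\in G} [g]\otimes_A[g^{-1}] \in H\otimes_A H$, a bimodule–central section of the multiplication map; this gives $\mathsf{pd}_H(M) \le \mathsf{pd}_A(\mathsf{res}\,M)$ for every $H$-module $M$ (via the induced summand $M \hookrightarrow H\otimes_A\mathsf{res}\,M$ and flatness of $H$ over $A$), so $\mathsf{gl.dim}(H) \le \mathsf{gl.dim}(A) = 1$, and equality holds because a module–finite algebra over the one-dimensional ring $O$ cannot be semisimple.

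I expect the main obstacle to be part (i): a priori an invariant subring need not even be Noetherian, and here one needs Noetherianity, integral closedness, dimension one, and module–finiteness of $A$ over $O$ simultaneously — each of these resting on the Reynolds operator, i.e.\ on $\mathsf{gcd}\bigl(n,\mathsf{char}(\kk)\bigr) = 1$. A more ramification–theoretic proof of (ii) is also available — localize and complete $O$ at its finitely many ramified primes, apply Lemma \ref{L:ReductionSkewGroupAction} to reduce to a connected factor of $\widehat{A}$ acted on by its decomposition group, and then invoke Lemma \ref{L:HeredOrders}(b) — but this requires noticeably more bookkeeping, so I would avoid it.
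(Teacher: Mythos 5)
Your argument is correct, but it takes a more self-contained route than the paper, which disposes of the two hard points by citation: part (i) is quoted from \cite[Theorem 4.1]{BurbanDrozdSurface}, and hereditariness of $H$ from \cite[Theorem 1.3]{ReitenRiedtmann} (see also \cite[Corollary 2.7]{BurbanDrozdQuotients}); the only part you share with the paper verbatim is the middle step, namely computing the rational hull via Lemma \ref{L:SkewGroupOfFields} and identifying $Z(H)$ with $O$. What you do differently is to in-line proofs of the two cited results: for (i) you run the Reynolds-operator arguments (ideal contraction for Noetherianity, integral closedness, dimension one) together with the trace-dual-basis argument for module-finiteness of $A$ over $O$, and for (ii) you exhibit the separability idempotent $\frac1n\sum_{g}[g]\otimes_A[g^{-1}]$, which gives $\mathsf{gl.dim}(H)\le \mathsf{gl.dim}(A)=1$ -- this is in substance the mechanism behind the Reiten--Riedtmann theorem, so conceptually the proofs agree even though you do not invoke the reference. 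The paper's version buys brevity and defers the bookkeeping to the literature; yours buys transparency, in particular it makes visible that the hypothesis $\mathsf{gcd}\bigl(n,\mathsf{char}(\kk)\bigr)=1$ enters exactly twice (the averaging operator and the separability element). Your explicit remark that $\phi$ must be assumed injective is well taken and is not a deviation: the paper's own appeal to Lemma \ref{L:SkewGroupOfFields} tacitly makes the same assumption, and without it the rational hull would have $K$-dimension $|G/\ker\phi|\cdot n<n^2$, so the statement as formulated requires it. The only cosmetic caveat is that if one insists that the center of an order be excellent (as in Definition \ref{D:Order}), your proof of (i) yields Noetherian, integrally closed and one-dimensional but not literally excellence; in the paper's applications $A$ is of finite type over $\kk$ or complete local, and then the module-finiteness you proved (plus Artin--Tate) settles this, so nothing is lost.
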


\begin{proof} For the first statement, see for instance \cite[Theorem 4.1]{BurbanDrozdSurface}. We conclude  that $O \subseteq H$ is finite and $H$ is a torsion free module over $O$. It follows from 
Lemma \ref{L:SkewGroupOfFields} that the rational hull of $H$ is $M_n(K)$. Hence $H$ is an order, whose center is $O$. Finally, it follows from \cite[Theorem 1.3]{ReitenRiedtmann} that $H$ is hereditary; see also \cite[Corollary 2.7]{BurbanDrozdQuotients}.
\end{proof}

\begin{lemma}\label{L:ActionDVR} Let $\kk$ be algebraically closed, $A = \kk\llbracket z\rrbracket$ and $G \stackrel{\phi}\lar \Aut_{\kk}(A)$ be an \emph{injective} group homomorphism.  Then the following statement are true:
\begin{enumerate}
\item[(i)] The group $G$ is cyclic, i.e.~$G \cong \ZZ_n$.
\item[(ii)] We have: $A[G, \phi] \cong H_n(O)$, where $O = \kk\llbracket z^n\rrbracket$. 
\end{enumerate}
\end{lemma}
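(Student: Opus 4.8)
\emph{Part (i): linearizing the action.} First I would observe that every $\kk$-algebra automorphism of $A = \kk\llbracket z\rrbracket$ fixes the subfield $\kk$ and preserves the maximal ideal $\mathfrak m = (z)$, hence acts by a scalar on $\mathfrak m/\mathfrak m^2 \cong \kk z$; this yields a character $\chi\colon G \to \kk^\ast$ with $\phi_g(z) \equiv \chi(g)\,z \pmod{\mathfrak m^2}$. Because $\mathsf{gcd}(n,\mathsf{char}(\kk)) = 1$, the averaged element $z' := \tfrac1n \sum_{g\in G}\chi(g)^{-1}\phi_g(z)$ is again a uniformizer (it is $\equiv z$ modulo $\mathfrak m^2$), so $\kk\llbracket z'\rrbracket = A$, and by construction $\phi_h(z') = \chi(h)\,z'$ for all $h\in G$. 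If $\chi(h) = 1$ then $\phi_h$ fixes the topological generator $z'$, hence $\phi_h = \id$, so $h = e$ by injectivity of $\phi$; thus $\chi$ is injective, $G \cong \chi(G)$ is a finite subgroup of $\kk^\ast$, and — as $\kk$ is algebraically closed with $n$ prime to $\mathsf{char}(\kk)$ — this subgroup equals $\mu_n = \langle\zeta\rangle$ for a primitive $n$-th root of unity $\zeta$. This proves (i) with $G \cong \ZZ_n$.

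\emph{Part (ii): identifying $A[G,\phi]$.} After replacing $z$ by $z'$ I may assume the generator $g$ of $G$ acts by $\phi_g(z) = \zeta z$. Then a power series is $G$-invariant exactly when only exponents divisible by $n$ occur, so $O = A^G = \kk\llbracket z^n\rrbracket$, a complete discrete valuation ring whose uniformizer $z^n$ lies in the center of $H := A[G,\phi]$. Next I would use that, since $n$ is invertible in $\kk$, the subring $\kk[G]\subset H$ contains the complete orthogonal system of idempotents $e_j := \tfrac1n\sum_{k=0}^{n-1}\zeta^{-jk}[g^k]$, $j\in\ZZ/n$. With the identities $[g^k]e_j = \zeta^{jk}e_j$ and $z\,e_j = e_{j+1}z$ (indices read modulo $n$), a direct computation of the Peirce components gives $e_i H e_j = O\cdot z^{\overline{i-j}}\,e_j$, where $\overline{i-j}\in\{0,\dots,n-1\}$ denotes the least residue. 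Thus $H = \bigoplus_{i,j} e_i H e_j$ is identified with the order $\{(x_{ij})\in M_n(K) : x_{ij}\in z^{\overline{i-j}}\,O\}$ in $M_n(K)$, where $K = \mathsf{Quot}(O)$; and conjugating by $\mathsf{diag}(1, z, \dots, z^{n-1})$ (which replaces the $(i,j)$-entry $x_{ij}$ by $z^{j-i}x_{ij}$) carries that $(i,j)$-entry into $O$ for $i\ge j$ and into $z^nO = \rad(O)$ for $i<j$, which is exactly $H_n(O)$.

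\emph{A variant, and where the work is.} Alternatively one may avoid the explicit matrix picture: by Theorem~\ref{T:SkewGroupOrders}, $H$ is a hereditary order with center $O$ and rational hull $M_n(K)$, so the structure theorem for hereditary orders over a complete discrete valuation ring gives $H \cong H(A',\vec p)$ with $A'$ a maximal order; the split rational hull forces $A' = O$ (the unique maximal order in $K$) and $|\vec p| = n$, whence $H/\rad(H)\cong\prod_i M_{p_i}(\kk)$. On the other hand $\rad(H) = \mathfrak m H$: the inclusion $\subseteq$ holds because $H/\mathfrak m H\cong\kk[G]$ is semisimple, and $\supseteq$ holds because $(\mathfrak m H)^n = z^nH = \rad(O)\,H\subseteq\rad(H)$ and an ideal that is nilpotent modulo the radical lies in the radical. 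Hence $H/\rad(H)\cong\kk[G]\cong\kk^n$ is commutative, which forces every $p_i = 1$, that is, $\vec p = \vec e$ and $H\cong H_n(O)$. In either approach the only step with genuine content is the linearization in (i); everything afterwards is routine, and the one place needing care is keeping the cyclic index conventions straight in the idempotent computation — respectively, in verifying $\rad(H) = \mathfrak m H$.
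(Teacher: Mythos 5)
Your proposal is correct, and its primary argument is essentially the paper's own: you linearize the action by averaging a uniformizer (your passage from $z$ to $z'$ is exactly the paper's construction of the linear action $\psi$ and the intertwining automorphism $\tau$), and you then decompose the skew group algebra via the idempotents $e_j=\tfrac1n\sum_k\zeta^{-jk}[g^k]$, which are the paper's $\varepsilon_k$. The only difference in the main route is presentational: the paper records the outcome as the explicit isomorphism (\ref{E:IsomorphismCyclicQuiver}) onto the completed cyclic-quiver algebra $\widehat{\kk\bigl[\vec{C}_n\bigr]}$ and then quotes $\widehat{\kk\bigl[\vec{C}_n\bigr]}\cong H_n(O)$, whereas you compute the Peirce components $e_iHe_j=z^{\overline{i-j}}O\,e_j$ and conjugate by $\mathsf{diag}(1,z,\dots,z^{n-1})$; this is the same computation in matrix language (one cosmetic slip: before the conjugation the entries $z^{\overline{i-j}}O$ live in $\kk(\!(z)\!)$, not in $K=\kk(\!(z^n)\!)$, so your intermediate ``order in $M_n(K)$'' should be read inside $M_n\bigl(\kk(\!(z)\!)\bigr)$; the rescaling then lands it in $M_n(K)$ as the standard order $H_n(O)$). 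Your variant for part (ii) is a genuinely different route: it replaces the explicit computation by the Harada--Reiner structure theorem quoted earlier in the paper, the splitness of the rational hull from Lemma \ref{L:SkewGroupOfFields}, and the identity $\rad(H)=\mathfrak{m}H$, so that commutativity of $H/\rad(H)\cong\kk[G]\cong\kk^{n}$ forces $\vec{p}=(1,\dots,1)$. That argument is shorter and free of index bookkeeping, but it invokes the classification of hereditary orders and produces the isomorphism only abstractly, whereas the paper's explicit idempotent description is reused later (in Proposition \ref{P:SkewProducts} the complex conjugation is tracked through the isomorphism (\ref{E:IsomorphismCyclicQuiver})), so the explicit form is not merely a stylistic choice there.
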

\begin{proof} Let $\idm =(z)$ be the maximal ideal in $A$. For any $g \in G$, let $\idm/\idm^2 \stackrel{\bar{\phi}_g}\lar  \idm/\idm^2$ be the induced automorphism.
We identify $\kk$ with $\idm/\idm^2$ sending $1$ to  $[z]$. Then 
$\bar{\phi}_g([z]) = \xi_g [z]$ for some $\xi_g \in \kk^\ast$.  
Clearly, $\xi_e = 1$ and $\xi_{g_1 g_2} = \xi_{g_1} \xi_{g_2}$ for all $g_1, g_2 \in G$. 
Next, for any $g \in G$ consider the   automorphism of $\kk$-algebras 
$$
A \stackrel{\psi_g}\lar A, \; f(z) \mapsto f(\xi_g z).
$$
We define $\tau \in \Aut_{\kk}(A)$ by the rule
$
\tau(z) = \frac{1}{n} \sum\limits_{g \in G} \psi_g^{-1} \phi_g(z). 
$
It is easy to see that $\psi_e = \mathsf{id}$, $\psi_{g_1 g_2} = \psi_{g_1} \psi_{g_2}$ and $\psi_g \tau = \tau \phi_g$ for all $g_1, g_2, g \in G$. Hence, $\tau$ can be extended to an isomorphism of  $\kk$-algebras  
$A[G, \phi] \stackrel{\tau}\lar A[G, \psi]$.

Since $\phi$ is injective, $G \stackrel{\psi}\lar  \Aut_{\kk}(A)$ is injective, too. It follows that $G \lar \kk^\ast, g \mapsto \xi_g$ is an injective group homomorphism. Moreover,  $\xi_g^n = 1$ for all $g \in G$. This implies that $G$ is a cyclic group of order $n$. 

Let $h$ be a generator of $G$. Then $\xi = \xi_h$ is a primitive $n$-th root of $1$ in $\kk$.  For $ 1 \le k \le n$, let   $\zeta_k := \xi^k$ 
and 
\begin{equation}\label{E:PrimitiveIdempotents}
\varepsilon_k := \frac{1}{n} \sum\limits_{j=0}^{n-1} \zeta_k^j [h]^j \in A[G, \psi].
\end{equation}
Then we have:
$$
\left\{
\begin{array}{ccc}
1 & = & \varepsilon_1 + \dots + \varepsilon_n \\
\varepsilon_k \cdot \varepsilon_l & = & \delta_{kl} \varepsilon_k, \; 1\le k, l\le n. 
\end{array}
\right.
$$
In other words, $\bigl\{\varepsilon_1,\dots,\varepsilon_n\bigr\}$ is a complete set
of primitive idempotents of  $A[G, \psi]$.
An isomorphism $A[G, \psi] \stackrel{\mu}\lar \widehat{\kk\bigl[\vec{C}_n\bigr]}$ is given by the  rule:
\begin{equation}\label{E:IsomorphismCyclicQuiver}
\left\{
\begin{array}{ccc}
\varepsilon_k & \stackrel{\mu}\mapsto & e_k \\
\varepsilon_{k+1} z \varepsilon_{k}  & \stackrel{\mu}\mapsto & a_k,
\end{array}
\right.
\end{equation}
where $\widehat{\kk\bigl[\vec{C}_n\bigr]}$ is the complete path algebra of a cyclic quiver $\vec{C}_n$ (see (\ref{E:Cyclicquiver})) and $e_k \in \widehat{\kk\bigl[\vec{C}_n\bigr]}$ is the idempotent   corresponding to the vertex $1 \le k \le r$. This gives us the desired isomorphisms
$
A[G, \phi] \cong A[G, \psi] \cong  \widehat{\kk\bigl[\vec{C}_n\bigr]} \cong  H_n(O).
$
\end{proof}

\section{Equivariant coherent sheaves on regular curves and hereditary non-commutative curves}

As in the previous section, let $G$ be a finite group of order $n$ and $\kk$ be a field such that $\mathsf{gcd}\bigl(n, \mathsf{char}(\kk)\bigr) = 1$. Let $Y$ be a  quasi-projective variety  over $\kk$ and $G \stackrel{\gamma}\lar \Aut_{\kk}(Y)$ be a group homomorphism, which we assume to be injective. Then we have a quasi-projective  variety  $X := Y/G$ and a canonical projection $Y \stackrel{\pi}\lar X$. 
Let us now recall the corresponding constructions, following \cite{SGA1} (see also \cite[Appendix 1]{Mustata}). 

We can always find an open affine $G$-invariant covering $Y = Y_1 \cup \dots \cup Y_m$. For any $1 \le i \le m$ let $A_i = \kO_Y(Y_i)$. Then for any $g \in G$ we have a $\kk$-algebra automorphism $A_i \stackrel{\gamma^\sharp_{i, g}}\lar A_i$. Moreover, $\gamma^\sharp_{i, e} = \id$ and  
$\gamma^\sharp_{i, g_1 g_2} = \gamma^\sharp_{i, g_2} \gamma^\sharp_{i, g_1}$ for all $g_1, g_2 \in G$. For any $g \in G$ we put $\widehat\gamma^{(i)}_{g} := \gamma^\sharp_{i, g^{-1}}$. In this way, for any $1 \le i \le m$ we get a group homomorphism 
$G \stackrel{\widehat\gamma^{(i)}}\lar \Aut_{\kk}(A_i)$.  Let  $O_i := A_i^G$ and $X_i = \Spec(O_i)$. By the construction  of $X = Y/G$, we have an open affine covering $X = X_1 \cup \dots \cup X_m$ with $Y_i = \pi^{-1}(X_i)$. Moreover, the morphism $Y_i \stackrel{\pi_i}\lar X_i$ is dual to the inclusion $O_i \subseteq A_i$. Next, we put $H_i := A_i[G, \widehat{\gamma}^{(i)}]$. In this way we  construct a coherent sheaf of $\kO_X$-algebras  $\kH$ on  $X$ such that $\kH(X_i) = H_i$ for all $1 \le i \le m$.  

\begin{proposition} The following results are true.
\begin{enumerate}
\item[(a)] Assume $Y$ is integral. Then for any $1\le i \le m$, the homomorphism $G \stackrel{\widehat\gamma^{(i)}}\lar \Aut_{\kk}(A_i)$ is injective.
\item[(b)] Let $\kK$ be the sheaf of rational functions on $X$, $\KK = \Gamma(X, \kK)$ be the field of rational functions on $X$ and $\FF = \Gamma(X, \kK \otimes_\kO \kH)$. Then we have an isomorphism of $\KK$-algebras $\FF \cong M_n(\KK)$.

\item[(c)] Suppose furthermore that $Y$ is a regular curve. Then $X$ is regular as well and 
$\XX = Y \hspace{-1mm}\sslash \hspace{-1mm}G = (X, \kH)$ is a non-commutative hereditary curve. 
\item[(d)] Let $Y$ be as above,  $y \in Y_\circ$, $G_\diamond \subseteq G$ be its stabilizer group, $r = |G_\diamond|$, $x := \pi(y) \in X$, $O = \widehat{\kO}_x$ and $H = \widehat{\kH}_x$. If $\kk$ is algebraically closed then $H$ is Morita equivalent to the standard hereditary order $H_r(O)$. 
\end{enumerate}
\end{proposition}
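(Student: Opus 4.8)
The plan is to prove the four assertions in order, with parts (a)--(c) being short reductions and part (d) the real work. For (a): since the covering $Y = Y_1 \cup \dots \cup Y_m$ is $G$-invariant, each $Y_i$ is a dense open $G$-stable subscheme of the integral scheme $Y$, so if $\widehat\gamma^{(i)}_g = \id_{A_i}$ then the automorphism $\gamma_{g^{-1}}$ of $Y$ restricts to the identity on $Y_i$; as $Y_i$ is dense and $Y$ reduced, $\gamma_{g^{-1}} = \id_Y$, whence $g = e$ by injectivity of $\gamma$. For (b) I would localise at the generic point: writing $\LL := \KK(Y) = \mathsf{Quot}(A_i)$, finiteness of $G$ makes $A_i$ integral over $O_i = A_i^G$, so $\KK \otimes_{O_i} A_i$ is a domain integral over the field $\KK$ and hence equals $\LL$, and the usual ``ratio of invariants'' argument gives $\LL^G = \KK$. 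Since forming a skew group ring commutes with the base change $O_i \hookrightarrow \KK$, one gets $\KK \otimes_{O_i} H_i \cong \LL[G, \widehat\gamma^{(i)}]$ with the induced $G$-action on $\LL$ injective by (a), and Lemma \ref{L:SkewGroupOfFields} identifies this with $M_n(\KK)$; as $\kK \otimes_\kO \kH$ is the constant sheaf with this stalk on the integral scheme $X$, taking global sections yields $\FF \cong M_n(\KK)$. For (c): $Y$ regular implies $Y$ normal, hence $O_i = A_i^G$ is normal (invariants of a normal domain under a finite group) and therefore regular in dimension one, so $X$ is a regular curve; and since $A_i$ is a connected Dedekind $\kk$-algebra, Theorem \ref{T:SkewGroupOrders} (using $\mathsf{gcd}(n, \mathsf{char}(\kk)) = 1$) shows $H_i = A_i[G, \widehat\gamma^{(i)}]$ is a hereditary order with centre $O_i$, which is free of rank $n$ over $A_i$ and hence coherent over $\kO_X$; passing to stalks (localisations of hereditary orders are hereditary) shows $\XX = (X, \kH)$ is a central non-commutative hereditary curve.

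Part (d) is the heart of the matter, and the plan is to reduce it to Lemmas \ref{L:ReductionSkewGroupAction} and \ref{L:ActionDVR} by analysing the completion $H = \widehat{\kH}_x$. Choose $i$ with $x \in X_i$ and let $\mathfrak{p} \subset O_i$ be the maximal ideal at $x$, so $O = \widehat{\kO}_x = \widehat{(O_i)}_{\mathfrak{p}}$ and, since the skew group ring commutes with completion, $H = \widehat{(H_i)}_{\mathfrak{p}} \cong (O \otimes_{O_i} A_i)[G, \widehat\gamma^{(i)}]$. As $A_i$ is finite over $O_i$, the ring $\widetilde A := O \otimes_{O_i} A_i$ is the $\mathfrak{p}$-adic completion of $A_i$, which splits as $\widetilde A \cong \prod_{j=1}^{t} \widehat{\kO}_{Y, y_j}$ over the fibre $\pi^{-1}(x) = \{y_1, \dots, y_t\}$, that is, the $G$-orbit of $y$ with $t = [G : G_\diamond]$; since $\kk = \bar\kk$ and $Y$ is a regular curve, each $\widehat{\kO}_{Y, y_j} \cong \kk\llbracket z_j\rrbracket$, and the decomposition is $G$-equivariant with $G$ permuting the idempotents $e_1, \dots, e_t$ of $\widetilde A$ transitively (by Galois theory for $O_i \subseteq A_i$). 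Lemma \ref{L:ReductionSkewGroupAction} then produces a Morita equivalence of $H$ with $A_\diamond[G_\diamond, \phi_\diamond]$, where $A_\diamond = \widehat{\kO}_{Y, y} \cong \kk\llbracket z\rrbracket$, $G_\diamond$ is the stabiliser and $\phi_\diamond$ the restricted action; this action is injective, since an element of $G_\diamond$ fixing $\kk\llbracket z\rrbracket$ fixes the subring $\kO_{Y, y}$ and hence $\KK(Y) = \mathsf{Quot}(\kO_{Y, y})$, forcing it to be trivial. Lemma \ref{L:ActionDVR} therefore gives $A_\diamond[G_\diamond, \phi_\diamond] \cong H_r(\kk\llbracket z^r\rrbracket)$ with $r = |G_\diamond|$; and since $\kk\llbracket z^r\rrbracket$ is the centre of $H_r(\kk\llbracket z^r\rrbracket)$ and the centre is Morita-invariant, it is isomorphic to the centre $O$ of $H$, so $H$ is Morita equivalent to $H_r(O)$, as asserted.

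The step I expect to require the most care is the identification $\widetilde A \cong \prod_{j} \widehat{\kO}_{Y, y_j}$ together with its compatible $G$-action: one must check that completing $A_i$ along $\mathfrak{p}$ sees exactly the $G$-orbit of $y$ and that the resulting product decomposition is $G$-equivariant, so that the transitivity hypothesis of Lemma \ref{L:ReductionSkewGroupAction} is met. Once this is in place, the two lemmas of the preceding section do all the remaining work, and parts (a)--(c) are then essentially bookkeeping around Lemma \ref{L:SkewGroupOfFields} and Theorem \ref{T:SkewGroupOrders}.
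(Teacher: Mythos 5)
Your proposal is correct and follows essentially the same route as the paper: (a) via faithfulness of the action on the function field, (b) by passing to the generic point and invoking Lemma \ref{L:SkewGroupOfFields}, (c) via Theorem \ref{T:SkewGroupOrders}, and (d) by identifying $\widehat{\kH}_x$ with the skew group ring of the product $\prod_j \widehat{\kO}_{Y,y_j}$ over the fibre and then applying Lemma \ref{L:ReductionSkewGroupAction} followed by Lemma \ref{L:ActionDVR}. You merely supply more detail than the paper (completion commuting with the skew product, transitivity on the fibre idempotents, and the identification $O \cong \kk\llbracket z^r\rrbracket$ via Morita invariance of centres), all of which is sound.
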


\begin{proof} (a) Let $\LL$ be the field of rational functions on $Y$. Then for any $1 \le i \le m$ we have a commutative diagram 
$$
\xymatrix{
G \ar[r]^-{\widehat{\gamma}^{(i)}} \ar@{_{(}->}[d]_-{\gamma} & \Aut_{\kk}(A_i) 
\ar@{^{(}->}[d] \\
\Aut_\kk(Y) \ar@{^{(}->}[r] & \Aut_{\kk}(\LL)
}
$$
where three out four group homomorphisms are known to be injective. Hence, 
$\widehat\gamma^{(i)}$ is injective, too. 

\smallskip
\noindent
(b) Since $\KK = \LL^G$, this result   is a consequence of Lemma \ref{L:SkewGroupOfFields}.

\smallskip
\noindent
(c)  This statement follows from Theorem \ref{T:SkewGroupOrders}.

\smallskip
\noindent
(d)  Let $\pi^{-1}(x) = \bigl\{y_1, \dots ,y_t\bigr\}$ with $y = y_1$. For $1\le i \le t$ we put $B_i = \widehat{\kO}_{y_i}$ and $B:= B_1  \times \dots \times B_t$. Then we have an injective group homomorphism $G \stackrel{\widehat{\gamma}}\lar \Aut_{\kk}(B)$. Moreover, we have an isomorphism of $\kk$-algebras $H \cong B[G, \widehat{\gamma}]$. By Lemma \ref{L:ReductionSkewGroupAction}, the $\kk$-algebras $H$ and $B_\diamond[G_\diamond, \widehat{\gamma}_\diamond]$ are Morita equivalent, where $B_\diamond = B_1$ and  $G_\diamond \stackrel{\widehat{\gamma}_\diamond}\lar \Aut_{\kk}(B_\diamond)$ is the restricted action. Since $\gamma$ is injective,  $\widehat{\gamma}_\diamond$ is injective, too. If $\kk$ is algebraically closed, then by Lemma \ref{L:ActionDVR} we have:  $G_\diamond \cong \ZZ_r$ and $B_\diamond[G_\diamond, \widehat{\gamma}_\diamond] \cong H_r(O)$.
\end{proof}

For any $g \in G$ the  automorphism $Y \stackrel{\gamma_g}\lar Y$ induces a pair of $\kk$-linear auto-equivalences $\gamma_g^\ast$ and $ \gamma_{g \ast}: \Coh(Y) \lar \Coh(Y)$, which assign to a coherent sheaf on $Y$  its inverse (respectively, direct)  image sheaf. We have: $\gamma_{g_1 g_2 \ast} = \gamma_{g_1 \ast} \gamma_{g_2 \ast}$ and $\gamma_{g \ast} = \gamma_{g^{-1}}^\ast$ for all $g, g_1, g_2 \in G$. Hence, in what follows we shall assume that the canonical isomorphisms of functors 
$\gamma_{g_1 g_2}^\ast \stackrel{\cong}\lar \gamma_{g_2}^\ast \gamma_{g_1}^\ast$ are trivial for all $g_1, g_2 \in G$. 

\begin{definition} The category $\Coh^G(Y)$ of $G$-equivariant coherent sheaves on $Y$ is defined as follows. 
\begin{enumerate}
\item[(a)] Its objects  are tuples
$\bigl(\kF, (\alpha_g)_{g \in G}\bigr)$,  where 
$\kF \in \Coh(Y)$ and $\kF \stackrel{\alpha_g}\lar \gamma_g^\ast(\kF)$ is an isomorphism in $\Coh(Y)$ for any $g \in G$ such that $\alpha_e = \id$ and 
\begin{equation}
\alpha_{g_2 g_1} = \gamma_{g_1}^\ast(\alpha_{g_2})\alpha_{g_1} \in \Hom_{Y}\bigl(\kF, \gamma_{g_2 g_1}^\ast(\kF)\bigr)
\end{equation}
for any  $g_1, g_2 \in G$.
\item[(b)] A morphism $\bigl(\kF, (\alpha_g)_{g \in G}\bigr) \lar 
\bigl(\kF', (\alpha'_g)_{g \in G}\bigr)$ of $G$-equivariant coherent sheaves 
is given by a morphism $f \in \Hom_Y(\kF, \kF')$ such that the diagram 
\begin{equation}\label{E:Equiv1}
\begin{array}{c}
\xymatrix{
\kF \ar[r]^-{\alpha_g} \ar[d]_-{f} & \gamma_g^\ast(\kF) \ar[d]^-{\gamma_g^\ast(f)} \\
\kF' \ar[r]^-{\alpha'_g}  & \gamma_g^\ast(\kF')
}
\end{array}
\end{equation}
is commutative for all $g \in G$. 
\end{enumerate}
\end{definition}

\smallskip
\noindent
The following result is well-known to the experts. For the reader's convenience, we give below its proof. 

\begin{proposition}\label{P:EquivNonComm} The categories $\Coh^G(Y)$ and 
$\Coh(\XX)$ are equivalent. 
\end{proposition}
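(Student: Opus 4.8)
The plan is to exhibit an explicit equivalence $\Phi \colon \Coh^G(Y) \lar \Coh(\XX)$ built out of the local skew-group-ring descriptions $\kH(X_i) = H_i = A_i[G, \widehat\gamma^{(i)}]$. First I would work locally on an affine piece. Over $X_i = \Spec(O_i)$ the category $\Coh(\XX)|_{X_i}$ is just the category of finitely generated $H_i$-modules, and I would check that a $G$-equivariant quasi-coherent sheaf on $Y_i = \Spec(A_i)$ — that is, an $A_i$-module $M_i$ together with compatible isomorphisms $M_i \stackrel{\alpha_g}\lar (\widehat\gamma^{(i)}_g)^\ast M_i$ — is exactly the same datum as an $H_i = A_i[G,\widehat\gamma^{(i)}]$-module: given the $\alpha_g$, one lets $[g]$ act on $m \in M_i$ by $[g]\cdot m := \alpha_g(m)$ (viewing $(\widehat\gamma^{(i)}_g)^\ast M_i$ as $M_i$ with the $A_i$-action twisted by $\widehat\gamma^{(i)}_g$), and the cocycle condition $\alpha_{g_2 g_1} = (\widehat\gamma^{(i)}_{g_1})^\ast(\alpha_{g_2})\,\alpha_{g_1}$ translates verbatim into associativity of the skew-group multiplication, while $\alpha_e = \id$ gives unitality and the twisted $A_i$-linearity of each $\alpha_g$ gives the relation $a[g] = [g]\widehat\gamma^{(i)}_g{}^{-1}(a)$. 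Conversely an $H_i$-module restricts to an $A_i$-module and the operators $[g]$ furnish the $\alpha_g$. Morphisms correspond under the same dictionary: the commutativity of square (\ref{E:Equiv1}) is precisely $[g]$-linearity. This gives an equivalence $\Coh^G(Y_i) \simeq \Coh(\XX)|_{X_i}$ — here I would note that $G$-equivariance forces a sheaf supported on $Y_i$ to be a genuine sheaf, not just a module, because $Y_i$ is $G$-invariant, and that finite generation over $A_i$ and over $O_i$ agree since $O_i \subseteq A_i$ is finite by Theorem \ref{T:SkewGroupOrders}(i).

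Second, I would globalise. Since the covering $Y = Y_1 \cup \dots \cup Y_m$ is $G$-invariant, restriction gives a functor $\Coh^G(Y) \lar \Coh^G(Y_i)$ for each $i$, and likewise $\Coh(\XX) \lar \Coh(\XX)|_{X_i}$; both categories are glued from their restrictions to the $X_i$ along the overlaps $X_{ij} = X_i \cap X_j$ via descent, and the local equivalences constructed above are compatible with restriction to overlaps (the skew-group-ring structure on $H_i|_{X_{ij}}$ is canonically identified with that on $H_j|_{X_{ij}}$ because both come from the single $G$-action on $Y$ over $Y_{ij} = \pi^{-1}(X_{ij})$). Hence the local equivalences glue to a global equivalence $\Phi\colon \Coh^G(Y) \lar \Coh(\XX)$. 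I would phrase this cleanly by observing that $\pi_\ast$ of the structure sheaf together with its $G$-action assembles into the sheaf of $\kO_X$-algebras $\kH$ — more precisely $\kH \cong (\pi_\ast \kO_Y)[G]$ as sheaves of $\kO_X$-algebras, the skew group algebra formed relative to the $G$-action on the coherent $\kO_X$-algebra $\pi_\ast\kO_Y$ — so that $\Phi$ is nothing but $\kF \mapsto \pi_\ast \kF$ with its induced $\kH$-module structure, where the equivariance structure $(\alpha_g)$ supplies the action of the $[g]$.

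Finally I would check that $\Phi = \pi_\ast(-)$ is an equivalence of abelian categories: faithfulness and fullness follow from the local statement and the sheaf/descent argument, exactness from exactness of $\pi_\ast$ on the affine pieces (as $\pi_i$ is affine), and essential surjectivity from the local surjectivity plus descent. One should also remark that $\Phi$ restricts compatibly to the subcategories of finite-length objects and of locally projective objects, though that is not needed for the bare statement.

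\medskip
The step I expect to be the main obstacle is the globalisation via descent: one must be careful that the isomorphisms gluing $H_i|_{X_{ij}}$ to $H_j|_{X_{ij}}$ really are compatible — as algebra isomorphisms respecting the skew-group decomposition — with the gluing of the $A_i$ over $Y_{ij}$, so that a $G$-equivariant sheaf on $Y$, whose gluing data over $Y_{ij}$ is $G$-equivariant, produces a well-defined $\kH$-module on $X$ and vice versa. Keeping track of the various canonical isomorphisms $\gamma_{g_1 g_2}^\ast \cong \gamma_{g_2}^\ast\gamma_{g_1}^\ast$ (which the excerpt has normalised to be the identity) against the cocycle condition is where the genuine bookkeeping lives; the purely local algebra dictionary, by contrast, is an elementary unwinding of definitions.
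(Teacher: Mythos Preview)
Your proposal is correct and follows essentially the same route as the paper: both establish the local dictionary between $A_i[G,\widehat\gamma^{(i)}]$-modules and $A_i$-modules equipped with compatible $\kk$-linear automorphisms $\alpha_g$, and then globalise along the $G$-invariant affine covering to produce the equivalence $\Coh^G(Y)\simeq\Coh(\XX)$. Your reformulation via $\Phi=\pi_\ast(-)$ and $\kH\cong(\pi_\ast\kO_Y)[G]$ is a clean packaging of exactly what the paper does by hand on sections, and the bookkeeping concern you flag about gluing is precisely the step the paper absorbs into its normalisation of the canonical isomorphisms $\gamma_{g_1g_2}^\ast\cong\gamma_{g_2}^\ast\gamma_{g_1}^\ast$.
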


\begin{proof} We first prove the local statement. Let $A$ be a (commutative) $\kk$-algebra and $G \stackrel{\phi}\lar \Aut_{\kk}(A)$ be a group homomorphism.  Consider a left $A[G, \phi]$-module $M$. Then $M$ is also a left $A$-module and for any $g \in G$ we have a $\kk$-linear automorphism
$$
M \stackrel{\alpha_g}\lar M, x \mapsto [g]x. 
$$
We have: $\alpha_e = \id$ and $\alpha_{g_1} \alpha_{g_2} = \alpha_{g_1 g_2}$ for all $g_1, g_2 \in G$. Moreover,
$$
\alpha_g(ax) = [g]ax = \phi_g(a) [g] x = \phi_g(a) \alpha_g(x)
$$
for all $a \in A$ and $x \in M$. Conversely, let $M$ be  a left $A$-module and 
$\left(M \stackrel{\alpha_g}\lar M\right)_{g \in G}$ be a family of $\kk$-linear automorphisms such that $\alpha_g(ax) = \phi_g(a) \alpha_g(x)$ for any $a \in A$ and $x \in M$ and such that $\alpha_e = \id$ and $\alpha_{g_1} \alpha_{g_2} = \alpha_{g_1 g_2}$ for all $g_1, g_2 \in G$. Then  $M$ can be equipped with a unique structure of a left $A[G, \phi]$-module such that $[g] x = \alpha_g(x)$. 
In these terms, a morphism $\bigl(M, (\alpha_g)_{g \in G}\bigr) \stackrel{f}\lar 
\bigl(M', (\alpha'_g)_{g \in G}\bigr)$ of $A[G, \phi]$-modules is a morphism 
of $A$-modules $M \stackrel{f}\lar M'$ such that 
\begin{equation}\label{E:Equiv2}
\begin{array}{c}
\xymatrix{
M \ar[r]^-{\alpha_g} \ar[d]_-{f} & M \ar[d]^-{f} \\
M' \ar[r]^-{\alpha'_g}  & M'
}
\end{array}
\end{equation}
is commutative for all $g \in G$. 

Let $A'$ be another commutative $\kk$-algebra and $A \stackrel{\vartheta}\lar A'$ be a homomorphism of $\kk$-algebras. Let  $X' = \Spec(A') \stackrel{\nu}\lar X= \Spec(A)$ be the morphism of schemes induced by $\vartheta$. The functors of global sections give equivalences of categories 
$\Qcoh(Y) \simeq A\mathsf{-Mod}$ and $\Qcoh(Y') \simeq A'\mathsf{-Mod}$.
In this identification, for $M \in A\mathsf{-Mod}$ we have:
$\nu^\ast(M) = A' \otimes_A M$. For any $a \in A$ and $x \in M$ we have:
$\vartheta(a) \otimes x = 1 \otimes ax$. Now, consider a special case when $A' = A$. Then we have mutually inverse isomorphisms of $A$-modules
$M \rightarrow \nu^\ast(M), x \mapsto 1 \otimes x$ and $\nu^\ast(M) \rightarrow  M, a \otimes x \mapsto \vartheta^{-1}(a)x$.

Now, let $\kF \in \Coh(Y)$ and  $Y = Y_1 \cup \dots \cup Y_m$ be a $G$-invariant open affine covering. For any $1 \le i \le m$ let $A_i = \kO_Y(Y_i)$, $M_i = \kF(Y_i)$  and $H_i = A_i[G, \widehat{\gamma}^{(i)}]$.  Let $\bigl(\kF \stackrel{\alpha_g}\lar \gamma_g^\ast(\kF)\bigr)_{g \in G}$ be a family of isomorphisms in $\Coh(Y)$ making $\kF$ to an $G$-equivariant sheaf. For each $1 \le i \le m$
$\alpha_g^{(i)} = \alpha_g\big|_{Y_i}: M_i \lar M_i$
is a $\kk$-linear map satisfying the property 
$\alpha_g^{(i)}(a x) = \widehat{\gamma}^{(i)}(a) \alpha_g^{(i)}(x)$ for all
$a \in A_i$ and $x \in M_i$. The above discussion allows one to equip $M_i$ with a structure of a left $H_i$-module. Globalizing this correspondence, we equip $\kF$ with a structure of a left $\kH$-module. Comparing (\ref{E:Equiv1}) with  (\ref{E:Equiv2}) we conclude that we  get a functor $\Coh^G(Y) \stackrel{\sE}\lar \Coh(\XX)$. Moreover, the above discussion shows that $\sE$ is fully faithful and dense, hence an equivalence of categories. 
\end{proof}

\smallskip
\noindent
\textbf{Summary}. Let $Y$ be a complete regular curve over a field $\kk$ and  $G$ be a finite group of order  $n$ such that $\mathsf{gcd}\bigl(n, \mathsf{char}(\kk)\bigr) = 1$. Let $G \stackrel{\gamma}\lar \Aut_{\kk}(Y)$ be an injective group homomorphism, $X = Y/G$ and $\XX = Y \hspace{-0.5mm}\sslash  \hspace{-0.5mm}G = (X, \kH)$ be the corresponding non-commutative hereditary curve. Then $X$ is also complete  and the following statements are true. 
\begin{enumerate}
\item[(i)] Let $\KK$ be the field of rational functions of $\XX$. Then the class $[\FF_{\XX}]$ of $\XX$ in the Brauer group $\mathsf{Br}(\KK)$ is trivial, where $\FF_{\XX} = \Gamma(X, \kK \otimes_\kO \kH)$. 
\item[(ii)] Let $y \in Y_\circ$, $x = \pi(y) \in X$ and $G_y$ be the stabilizer of $y$. Then $\widehat{H}_x$ is Morita equivalent to 
$\widehat{\kO}_y[G_y, \widehat{\gamma}_y]$. 
\item[(iii)] If $\kk$ is algebraically closed then 
$\widehat{\kO}_y[G_y, \widehat{\gamma}_y] \cong H_r(\widehat{\kO}_x)$, where $r = \big|G_y\big|$. In particular, the special locus $\mathfrak{E}_{\XX}$ of the hereditary curve $\XX$ admits the following description. Let $y \in Y_\circ$ be such that $x = \pi(y)$. 
Then $x \in \mathfrak{E}_{\XX}$ if and only if $G_y \ne \{e\}$. Moreover, $\rho(x) = \big|G_y|$. 
\end{enumerate}

\begin{remark} In the case the field $\kk$ is algebraically closed of characteristic zero, the theory of non-commutative hereditary curves  was considered in  \cite{ChanIngalls} from the perspective  of algebraic stacks.
\end{remark}

\begin{theorem}\label{T:Equivariant} Let $\kk$ be a field of $\mathsf{char}(\kk) \ne 2$, $Y$ be a complete regular and geometrically integral  curve over $\kk$ and $G \subset \Aut_{\kk}(Y)$ be a finite group of order $n$ acting faithfully on $Y$. Assume that $\mathsf{gcd}\bigl(n, \mathsf{char}(\kk)\bigr) = 1$ and  $X = Y/G$ is a curve of genus zero. Then there exists a finite dimensional $\kk$-algebra $\Pi_{(Y, G)}$ such that we have an exact equivalence 
\begin{equation}\label{E:EquivalencesEquiv}
D^b\bigl(\Coh^G(Y)\bigr) \simeq D^b\bigl(\Pi_{(Y, G)}\mathsf{-mod}\bigr).
\end{equation}
\end{theorem}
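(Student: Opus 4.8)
The plan is to reduce the assertion to Theorem \ref{T:TiltingMain}. By Proposition \ref{P:EquivNonComm} there is an equivalence $\Coh^G(Y) \simeq \Coh(\XX)$, where $\XX = Y \sslash G = (X, \kH)$ is the hereditary curve attached to the $G$-action; since $Y$ is complete, regular and geometrically integral and $\mathsf{gcd}(|G|, \mathsf{char}(\kk)) = 1$, the quotient $X = Y/G$ is again a complete regular curve, and by hypothesis $g(X) = 0$. Moreover, by the Summary preceding the theorem (which rests on Lemma \ref{L:SkewGroupOfFields}) the central simple algebra $\FF_{\XX} = \Gamma(X, \kK \otimes_\kO \kH)$ is isomorphic to $M_n(\KK)$, so its Brauer class is $\eta_{\XX} := [\FF_{\XX}] = 0$. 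Finally, $\eta = 0$ is an exceptional class on any genus-zero curve: by Theorem \ref{T:NCRegular} the homogeneous curve with trivial Brauer class over $X$ has category of coherent sheaves equivalent to $\Coh(X)$, and when $g(X) = 0$ the curve $X$ is either $\PP^1_{\kk}$ (with tilting bundle $\kO \oplus \kO(1)$) or a Severi--Brauer conic, and in both cases $D^b(\Coh(X))$ admits a tilting object; see \cite{KussinMemoirs}. Thus it remains to realize $\XX$ as (a curve whose category of coherent sheaves is equivalent to) one of the weighted curves $\EE(X, 0, \rho)$ of Theorem \ref{T:Weighting}, and then to invoke Theorem \ref{T:TiltingMain}.

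Next I would set $\rho \colon X_\circ \to \NN$ by letting $\rho(x)$ be the number $r$ of diagonal blocks in a standard form $\widehat{\kH}_x \cong H(A_x, \vec{p}_x)$, $\vec{p}_x \in \NN^r$, of the complete hereditary order $\widehat{\kH}_x$; this is well defined by the structure theory of standard hereditary orders recalled in Section 2. Choosing a maximal overorder $\widetilde{\kH} \supseteq \kH$, the quotient $\widetilde{\kH}/\kH$ is a coherent torsion sheaf on $X$, hence supported on a finite set; off that set $\widehat{\kH}_x$ is maximal and $\rho(x) = 1$, so $\rho$ is a weight function. Because $\eta_{\XX} = 0$, for each closed point $x$ the local rational envelope $\widehat{\kH}_x \otimes_{\widehat{\kO}_x} \mathsf{Quot}(\widehat{\kO}_x)$ is split; hence the maximal order $A_x$ occurring in $H(A_x, \vec{p}_x)$ is Morita equivalent to $\widehat{\kO}_x$, and, combined with the Morita equivalence $H(A_x, \vec{p}_x) \sim H_{\rho(x)}(A_x)$ from the same structure theory, this shows that $\widehat{\kH}_x$ is Morita equivalent to the standard order $H_{\rho(x)}(\widehat{\kO}_x)$. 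This is exactly the local datum prescribed in Theorem \ref{T:Weighting}(a) for the curve $\EE(X, 0, \rho)$, the auxiliary homogeneous curve being taken split. By the Morita theory of hereditary curves --- the equivalence class of $\Coh$ of a hereditary curve depends only on $X$, its Brauer class and the local Morita types of the $\widehat{\kH}_x$, which is the content underlying Theorem \ref{T:Weighting}; see \cite{BurbanDrozdMorita, Spiess} --- one concludes $\Coh(\XX) \simeq \Coh(\EE(X, 0, \rho))$.

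Finally, Theorem \ref{T:TiltingMain} then applies to $\EE(X, 0, \rho)$ and yields an exact equivalence $D^b(\Coh(\EE(X, 0, \rho))) \simeq D^b(\Pi\mathsf{-mod})$ with $\Pi$ the finite-dimensional $\kk$-algebra of (\ref{E:Squid}); concatenating the equivalences above and setting $\Pi_{(Y,G)} := \Pi$ gives (\ref{E:EquivalencesEquiv}). I expect the second step --- the identification of $\XX$ with the weighted curve $\EE(X, 0, \rho)$ --- to be the main obstacle. When $\kk = \bar{\kk}$ it is immediate from item (iii) of the Summary, where one even has $\widehat{\kH}_x \cong H_{|G_y|}(\widehat{\kO}_x)$ on the nose; over an arbitrary field one must know both that a hereditary order with split local rational envelope is Morita equivalent to a standard one and that $\Coh$ of a hereditary curve is reconstructed from its local Morita data together with the global Brauer class. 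An alternative, more self-contained route bypasses this reconstruction by running the minor/tilting construction from the proof of Theorem \ref{T:TiltingMain} directly on $\XX$: one chooses a line bundle $\kP \in \Pic(\XX)$ with prescribed local projective stalks, passes to its homogeneous minor $\YY$, and obtains the semi-orthogonal decomposition and the tilting object $\kZ[-1] \oplus \widetilde{\kF}[0]$ verbatim, producing the same algebra $\Pi$ without ever naming $\rho$.
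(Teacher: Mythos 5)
Your proposal is correct and follows essentially the same route as the paper: pass from $\Coh^G(Y)$ to $\Coh(\XX)$ for $\XX = Y \hspace{-0.5mm}\sslash \hspace{-0.5mm}G$ via Proposition \ref{P:EquivNonComm}, note that $[\FF_{\XX}]=0$ and that the trivial class is exceptional on the genus-zero quotient (the paper gets this from Kussin's tilting bundle on the conic $X_{(a,b)}$ with quaternion endomorphism algebra \cite{KussinTilting}, where you cite \cite{KussinMemoirs}), and then apply Theorem \ref{T:TiltingMain}. Your middle paragraph simply makes explicit the identification of $\XX$ with a weighted curve $\EE(X,0,\rho)$ of Theorem \ref{T:Weighting} via the local structure theory of hereditary orders with split rational hull --- a step the paper's proof leaves implicit --- so it is a welcome elaboration rather than a different argument.
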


\begin{proof} Any geometrically integral  regular projective curve $X$ over $\kk$ of genus zero is isomorphic to a plane conic 
\begin{equation}\label{E:Conics}
X_{(a, b)} := \mathsf{Proj}\bigl(\kk[x,y,z]/(a x^2 + b y^2 - z^2)\bigr) 
\end{equation}
 for some $a, b \in \kk^\ast$. Let 
$$
\Lambda_{(a, b)} = \left\langle i, j \, \big| \, i^2 = a, j^2 = b, ij = - ji\right\rangle_{\kk}
$$
be the corresponding generalized quaternion algebra. It was shown in \cite{KussinTilting} that there exists a tilting bundle $\kF \in \VB(X_{(a, b)})$ such that $\bigl(\End_X(\kF)\bigr)^\circ \cong \Lambda_{(a, b)}$. The statement is therefore a consequence of Theorem \ref{T:TiltingMain} and Proposition \ref{P:EquivNonComm}.
\end{proof}

\begin{example} Let $G \subset \SL_2(\CC)$ be a finite subgroup. Then $G$ acts on the complex projective line $Y = \PP^1$ by the fractional-linear transformations. Then $X = Y/G \cong \PP^1$. Let $\XX = Y \hspace{-0.5mm}\sslash \hspace{-0.5mm}G$ be the corresponding non-commutative hereditary curve.  Then there exists a finite-dimensional algebra $\Pi_{(\PP^1, G)}$ of the form (\ref{E:TrueSquid})  such that 
$$
D^b\bigl(\Coh^G(Y)\bigr) \simeq D^b\bigl(\Coh(\XX)\bigr) \simeq D^b\bigl(\Pi_{(\PP^1, G)}\mathsf{-mod}\bigr).
$$
Up to a conjugation, a classification of finite subgroups of $\SL_2(\CC)$ is well-known; see for instance \cite{Kostrikin}.  In all the cases, the cardinality of the exceptional set $\mathfrak{E}_{\XX}$ is either two or three.
The group $\Aut_{\CC}(\PP^1)$ acts transitively on the set of triples on distinct points of $\PP^1$.
In the case of two special  points, we may assume that $\mathfrak{E}_{\XX} = \bigl\{(0:1), (1:0)\bigr\}$. In the case of three special  points, we may assume that $\mathfrak{E}_{\XX} = \bigl\{(0:1), (1:0), (1:1)\bigr\}$.  Therefore, to define $\XX$, it is sufficient to specify the sequence $(a, b, c)$ of orders on non-trivial stabilizers of the $G$-action on $\PP^1$ (with $a \le b \le c$ and allowing $a = 1$ in the case there are only two special  points). The corresponding hereditary curve $\XX$ will be therefore denoted by $\PP^1_{(a, b, c)}$. The following cases can occur. 
\begin{enumerate}
\item[(a)] $G \cong \ZZ_n$ with $n \ge 2$. The corresponding weight sequence is $(n, n)$.
\item[(b)] $G \cong \mathbb{D}_n$ is a binary dihedral group with $n \ge 2$. The corresponding weight sequence is $(2, 2, n)$. 
\item[(c)] $G$ is a binary tetrahedral, octahedral or icosahedral group. The corresponding weight sequences are $(2, 3, 3)$, $(2, 3, 4)$ and $(2, 3, 5)$, respectively. 
\end{enumerate}
On the other hand, the simply-laced Dynkin diagrams are parametrized  by the  triples 
$(a, b, c) \in \NN^3$ such that $$
a \le b \le c \quad \; \mbox{and} \quad  \dfrac{1}{a} + \dfrac{1}{b}+ \dfrac{1}{c} > 1.
$$  Hence, we may write $\Pi_{(\PP^1, G)} = \Pi_{(a, b, c)}$. On the other hand, let $\Gamma_{(a, b, c)}$ be the path algebra of the corresponding Euclidean quiver. Then there exists an exact equivalence of triangulated categories 
$
D^b\bigl(\Pi_{(a, b, c)}\mathsf{-mod}\bigr) \simeq D^b\bigl(\Gamma_{(a, b, c)}\mathsf{-mod}\bigr),
$
see \cite[Section 4.3]{RingelTameAlgebras} and  \cite[Section XII.1]{SimSko}.
Hence, there exists an exact equivalence 
$$
D^b\bigl(\Coh(\PP^1_{(a, b, c)})\bigr) \simeq D^b\bigl(\Gamma_{(a, b, c)}\mathsf{-mod}\bigr).
$$
This striking observation was made for the first time by Lenzing in \cite{LenzingFirst}. Later it led to a development of the theory  of weighted projective lines of Geigle and Lenzing in \cite{GeigleLenzing}. An elaboration of the equivalence $
D^b\bigl(\Coh^G(\PP^1)\bigr)  \simeq D^b\bigl(\Pi_{(\PP^1, G)}\mathsf{-mod}\bigr)
$
in the framework  of genuine equivariant coherent sheaves on $\PP^1$ can be found in \cite{Kirillov, Orbifolds}.
\end{example}

\begin{example}\label{E:EllipticQuotients1}
Let $\kk$ be a field of  $\mathsf{char}(\kk) \ne 2$, $\lambda \in \kk^\ast\setminus \{1\}$ and 
$$
Y_\lambda = \mathsf{Proj}\bigl(\kk[x,y,z]/(zy^2 - x(x-z)(x-\lambda z))\bigr) 
$$
be an elliptic curve over $\kk$. Then $G = \left\langle \imath \,\big| \, \imath^2 = e\right\rangle \cong \ZZ_2$ acts on $Y_\lambda$ by the rule
$(x: y: z) \stackrel{\imath}\mapsto (x: -y: z)$. There are precisely four points of $Y_\lambda$  with non-trivial stabilizers: $(0:0:1)$, $(0:1:0)$, $(1:0:1)$ and $(\lambda: 0:1)$. Next, we have: $X = Y_\lambda/G \cong \PP^1_{\kk}$. Let $Y_\lambda \stackrel{\pi}\lar X$ be the canonical projection. One can choose homogeneous coordinates on $X$ so that the image of the set of four ramification points of $\pi$   is
$\mathfrak{E} = \bigl\{(0:1),  (1:0), (1:1), (\lambda:1)\bigr\}$. For any $x \in \mathfrak{E}$ we have $\rho(x) = 2$. Let
 $\Sigma_\lambda$ be the tubular canonical algebra of type $((2,2,2,2);\lambda)$ \cite{Ringel}, i.e.~the path algebra of the following quiver

\begin{equation}\label{E:tubular}
\begin{array}{c}
\xymatrix
{
        &           & \circ \ar[lld]_{a_1}
\ar[ld]^{a_2}  \ar[rd]_{a_3}  \ar[rrd]^{a_4}
      &         &       \\
\circ \ar[rrd]_{b_1}  & \circ \ar[rd]^{b_2}
&        & \circ \ar[ld]_{b_3}
& \circ \ar[lld]^{b_4}\\
        &           &\circ &         &       \\
}
\end{array}
\end{equation}
modulo the  relations $b_1 a_1 - b_2 a_2 = b_3 a_3$ and  $b_1 a_1 - \lambda b_2 a_2 = b_4 a_4$.
An  exact equivalence
 of triangulated  categories
 \begin{equation}\label{E:standardtilting}
 D^b\bigl(\Coh^G(Y_\lambda)\bigr) \lar D^b(\Sigma_\lambda\mathsf{-mod})
 \end{equation}
 was for the first time discovered by Geigle and Lenzing; see  \cite[Example 5.8]{GeigleLenzing}.
 The algebra $\Sigma_\lambda$ is derived-equivalent to the squid algebra (\ref{E:TrueSquid}) of the same type $((2,2,2,2);\lambda)$ (see \cite{RingelTameAlgebras, Ringel}), which is of course consistent with Theorem \ref{T:Equivariant}.
\end{example}

\begin{example}\label{E:EllipticQuotients2} Let $\kk = \CC$. Consider  the following finite group actions on the following complex elliptic curves. 

\noindent
(I) Let  $Y = \mathsf{Proj}\bigl(\kk[x,y,z]/(zy^2 - x^3 - z^3)\bigr)$ and $G = \left\langle \varrho \,\big|\, \varrho^6 = e\right\rangle \cong \ZZ_6$. Then $G$ acts on $Y$ by the rule $\varrho(x:y:z) = (\xi x: -y: z)$, where $\xi = \exp\left(\dfrac{2\pi i}{3}\right)$  and $Y/G \cong \PP^1$. Moreover, 
\begin{enumerate}
\item[(a)] The stabilizer of $(-1:0:1)$ is $\ZZ_2$.
\item[(b)] The stabilizer of $(0:1:1)$ is $\ZZ_3$.
\item[(c)] The stabilizer of $(0:1:0)$ is $\ZZ_6$.
\end{enumerate}
Combining the exact equivalences of triangulated categories (\ref{E:EquivalencesEquiv}) and (\ref{E:EquivalenceCanonicalSquid}) we get  
$$
D^b\bigl(\Coh^G(Y)\bigr) \simeq D^b\bigl(\Pi_{(2, 3,6)}\mathsf{-mod}\bigr)  \simeq D^b\bigl(\Sigma_{(2, 3,6)}\mathsf{-mod}\bigr),
$$
where $\Pi_{(2, 3,6)}$ and $\Sigma_{(2, 3,6)}$ are the squid and canonical algebras of type $(2, 3, 6)$, respectively.

\smallskip
\noindent
(II) Next, let $\widetilde\varrho = \varrho^4$. Consider the subgroup $\ZZ_3 \cong N = \langle \widetilde\varrho\rangle \subset G$. Then $N$ acts on $Y$ by the rule $\widetilde{\varrho}(x:y:z) = (\xi x: y: z)$ Again, we have $Y/N \cong \PP^1$. However, this time the stabilizer of the point $(-1:0:1)$ is trivial, whereas 
$(0:1:1)$ and $(0:-1:1)$ belong to different orbits. 
The  stabilizer of each point  $(0:1:1)$, $(0:-1:1)$  and $(0:1:0)$ is the group $N$ itself. Therefore, we have exact equivalences of triangulated categories 
$$
D^b\bigl(\Coh^N(Y)\bigr) \simeq D^b\bigl(\Pi_{(3, 3, 3)}\mathsf{-mod}\bigr)  \simeq D^b\bigl(\Sigma_{(3, 3, 3)}\mathsf{-mod}\bigr).
$$

\smallskip
\noindent 
(III) Now, let   $Y = \mathsf{Proj}\bigl(\kk[x,y,z]/(zy^2 - x^3 + x z^2)\bigr)$ and $G = \left\langle \varrho \,\big|\, \varrho^4 = e\right\rangle \cong \ZZ_4$. Then $G$ acts on $Y$ by the rule $\varrho(x:y:z) = (- x: i y: z)$ and $Y/G \cong \PP^1$. The stabilizer of the point $(1:0:1)$ is $\ZZ_2$, whereas the stabilizer of $(0:0:1)$ and $(0:1:0)$ is the group $G$ itself. Therefore, we have exact equivalences of triangulated categories 
$$
D^b\bigl(\Coh^G(Y)\bigr) \simeq D^b\bigl(\Pi_{(2, 4, 4)}\mathsf{-mod}\bigr)  \simeq D^b\bigl(\Sigma_{(2, 4, 4)}\mathsf{-mod}\bigr).
$$
\end{example}

\section{Tilting on real curve orbifolds}
In this section, we shall discuss some interesting 
and natural actions \emph{over $\RR$} on \emph{complex} projective curves. Do this, we begin with the local case. 

\begin{proposition}\label{P:ActionTypes} Let $G$ be a finite group,   $A = \CC\llbracket z\rrbracket$, $\idm = (z)$ and 
$G \stackrel{\phi}\lar \Aut_{\RR}(A)$ be an injective group homomorphism. Then the following two cases can occur. 
\begin{enumerate}
\item[(a)] For any $g \in G$ the homomorphism $A \stackrel{\phi_g}\lar A$ is $\CC$-linear. Then $G = \langle \varrho \,\big|\, \varrho^n = e\rangle$ is a cyclic group and there exists another choice of a local parameter $w \in \idm$ such that 
$\phi_\varrho(w) = \xi w$, where $\xi = \exp\left(\dfrac{2\pi i}{n}\right)$.
\item[(b)] Otherwise, 
\begin{equation}\label{E:DihedralPresentation}
G \cong D_n = \bigl\langle \sigma, \varrho \,\big|\, \sigma^2 = e = \varrho^n, \sigma \varrho \sigma^{-1} = \varrho^{-1}\bigr\rangle
\end{equation}
is a dihedral group for some $n\in \NN$. Moreover, there exists a choice of a local parameter $w \in \idm$ such that
\begin{equation}\label{E:action}
\left\{
\begin{array}{c}
\phi_{\sigma}(\alpha) = \bar{\alpha} \; \mbox{\rm for} \; \alpha \in \CC \; \mbox{\rm and} \; 
\phi_{\sigma}(w) = w \\
\; \, \phi_{\varrho}(\alpha) \, = \alpha \; \, \mbox{\rm for}\;  \alpha \in \CC \; \mbox{\rm and} \; 
\phi_{\varrho}(w) = \xi w,
\end{array}
\right.
\end{equation}
where $\xi = \exp\left(\dfrac{2\pi i}{n}\right)$.
\end{enumerate}
\end{proposition}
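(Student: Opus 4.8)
The strategy is to first understand the individual $\RR$-algebra automorphisms of $A=\CC\llbracket z\rrbracket$ and only then reconstruct the group. Any $\phi\in\Aut_\RR(A)$ fixes $\RR$, preserves the maximal ideal $\idm=(z)$, and maps $i$ to a square root of $-1$ in $A$; writing $\phi(i)=\sum_{k\ge 0}c_k z^k$ and comparing coefficients in $\phi(i)^2=-1$ forces $c_0=\pm i$ and then, inductively, $c_k=0$ for $k\ge 1$, so $\phi(i)=\pm i\in\CC$. Hence $\phi(\CC)=\CC$ and $\phi|_\CC$ is either the identity (we call $\phi$ \emph{$\CC$-linear}) or complex conjugation (we call $\phi$ \emph{conjugate-linear}). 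Applying this to each $\phi_g$, the assignment $g\mapsto\phi_g|_\CC$ is a group homomorphism $G\to\mathsf{Gal}(\CC/\RR)\cong\ZZ_2$; its kernel $N:=\{g\in G\mid\phi_g\text{ is }\CC\text{-linear}\}$ has index $1$ or $2$ in $G$. Case (a) is precisely $N=G$, case (b) is precisely $[G:N]=2$.

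For the linearization that will be used throughout: if $\phi\in\Aut_\CC(A)$ has finite order $d$ and $\phi(z)\equiv\zeta z\pmod{\idm^2}$ with $\zeta\in\CC^\ast$, then $\zeta^d=1$, the element $w:=\tfrac1d\sum_{k=0}^{d-1}\zeta^{-k}\phi^k(z)$ is a uniformizer with $\phi(w)=\zeta w$, and since a $\CC$-algebra automorphism of $\CC\llbracket w\rrbracket$ fixing $w$ is the identity, $\zeta$ has order exactly $d$. In case (a) this shows the character map $\chi\colon G\to\CC^\ast$, $g\mapsto(\phi_g\bmod\idm^2)$, is injective, so $G$ is a finite subgroup of $\CC^\ast$, hence cyclic of order $n:=|G|$ with image $\mu_n$. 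Choosing the generator $\varrho$ with $\chi(\varrho)=\xi:=\exp(2\pi i/n)$ and linearizing $\phi_\varrho$ produces a uniformizer $w$ with $\phi_\varrho(w)=\xi w$; this is assertion (a). (It merely records the coordinate implicit in the proof of Lemma \ref{L:ActionDVR}.)

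The heart of case (b) is the claim that \emph{every conjugate-linear $\phi\in\Aut_\RR(A)$ of finite order has order $2$}. Indeed, let $m$ be the order of the $\CC$-linear automorphism $\phi^2$; if $\phi$ had order $>2$ then $m\ge 2$. Linearize $\phi^2$ to get a uniformizer $v$ with $\phi^2(v)=\zeta v$ for a primitive $m$-th root of unity $\zeta$. Since $\phi$ commutes with $\phi^2$ and $\phi^2(\phi(v))=\bar\zeta\,\phi(v)=\zeta^{-1}\phi(v)$, the element $\phi(v)$ lies in the $\zeta^{-1}$-eigenspace of $\phi^2$ on $A$, which is $v^{m-1}\CC\llbracket v^m\rrbracket\subseteq\idm^{m-1}$; for $m\ge 3$ this contradicts that $\phi(v)$ generates $\idm$, so $m=2$. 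Then $\phi(v)=v\,s(v^2)$ for a unit $s$, and expanding $\phi^2(v)=-v$ — using that $\phi$ conjugates coefficients — gives the identity $s(v^2)\,\bar s\!\bigl(v^2 s(v^2)^2\bigr)=-1$, whose constant term reads $|s(0)|^2=-1$, an absurdity. Hence $\phi$ has order $2$. Applied to any $\sigma\in G\setminus N$ this yields $\sigma^2=e$. Writing $N=\langle\varrho\rangle\cong\ZZ_n$ as above, a direct computation on $\idm/\idm^2$ — where $\phi_\sigma$ acts conjugate-semilinearly and $\phi_\varrho$ by multiplication with $\xi$ — shows that $\phi_\sigma\phi_\varrho\phi_\sigma^{-1}$ acts by $\xi^{-1}$, hence equals $\phi_\varrho^{-1}$ by injectivity of $\chi|_N$. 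Therefore $\sigma\varrho\sigma^{-1}=\varrho^{-1}$ and $G\cong D_n$ with the presentation (\ref{E:DihedralPresentation}).

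Finally I would produce the adapted coordinate in case (b). Keep $\varrho,\sigma$ and the uniformizer $w_0$ with $\phi_\varrho(w_0)=\xi w_0$. From $\varrho\sigma=\sigma\varrho^{-1}$ and conjugate-linearity of $\phi_\sigma$ one gets $\phi_\varrho(\phi_\sigma(w_0))=\xi\,\phi_\sigma(w_0)$, so $w_0$ and $\phi_\sigma(w_0)$ are both $\xi$-eigenvectors of $\phi_\varrho$. Put $w:=\alpha w_0+\bar\alpha\,\phi_\sigma(w_0)$ for $\alpha\in\CC^\ast$. Since $\phi_\sigma^2=\id$ one checks $\phi_\sigma(w)=w$ directly, while $\phi_\varrho(w)=\xi w$ because $\phi_\varrho$ is $\CC$-linear; and if $\phi_\sigma(w_0)\equiv\mu w_0\pmod{\idm^2}$ (necessarily $|\mu|=1$) then $w\equiv(\alpha+\bar\alpha\mu)w_0$, so $w$ is a uniformizer for all $\alpha$ outside at most two real lines through $0$. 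Fixing such an $\alpha$, the ring $A=\CC\llbracket w\rrbracket$ carries $\phi_\varrho$ as the $\CC$-linear map $w\mapsto\xi w$ and $\phi_\sigma$ as the conjugate-linear map $w\mapsto w$, which are exactly the formulas (\ref{E:action}). The main obstacle is the previous paragraph: abstract group theory does not force $G$ to be dihedral — a dicyclic extension of $\ZZ_n$ by $\ZZ_2$ is a priori possible — and it is precisely the impossibility $|s(0)|^2=-1$, i.e.\ the absence of an order-$4$ conjugate-linear automorphism of $\CC\llbracket z\rrbracket$, that rules this out.
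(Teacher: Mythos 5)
Your argument is correct, and while it follows the same overall skeleton as the paper --- split off the subgroup $N$ of $\CC$-linear elements via $\phi_g(i)=\pm i$, linearize the cyclic part as in Lemma \ref{L:ActionDVR}, then establish the dihedral relations and the normal form (\ref{E:action}) --- it treats the antilinear part by a genuinely different route. The paper extends the averaging construction of Lemma \ref{L:ActionDVR} to the whole group, producing a single uniformizer $w$ with $\phi_g(w)=\xi_g w$ for \emph{every} $g\in G$ (including the antilinear ones); after rescaling $w$ by a square root of the eigenvalue of $\phi_\sigma$ it reads $\sigma^2=e$ and $\sigma\varrho\sigma^{-1}=\varrho^{-1}$ directly off the resulting formulas. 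You instead prove intrinsically, before normalizing $\sigma$ at all, that any conjugate-linear automorphism of finite order is an involution (the eigenspace estimate for $m\ge 3$ plus the constant-term contradiction $|s(0)|^2=-1$ for $m=2$; in effect you are showing that the tangent multiplier of $\phi_\sigma^2$ is the positive real number $|s(0)|^2$, hence equal to $1$), then obtain $\sigma\varrho\sigma^{-1}=\varrho^{-1}$ from the induced character on $\idm/\idm^2$ together with injectivity of $\chi|_N$, and only afterwards construct the invariant uniformizer by symmetrizing over $\langle\sigma\rangle$, namely $w=\alpha w_0+\bar\alpha\,\phi_\sigma(w_0)$ for generic $\alpha$. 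Your route makes the exclusion of a dicyclic extension explicit and self-contained, and it avoids the (slightly glossed) extension of the simultaneous-eigenvector construction to semilinear group elements; the paper's route buys brevity, since once the common eigenvector exists all relations become one-line computations in that coordinate. Your auxiliary verifications --- that $\phi_\sigma(w_0)$ also lies in the $\xi$-eigenspace of $\phi_\varrho$, and that the symmetrized $w$ remains a uniformizer because the excluded set of $\alpha$ is a real line through $0$ --- are exactly what is needed to close the construction, so the proposal is complete.
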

\begin{proof} First note that 
$
\bigl\{a \in A \,\big|\, a^2 +1 = 0\bigr\} = \{i, -i\}.
$
Since for any $g \in G$ the map $A \stackrel{\phi_g}\lar A$ is an automorphism of $\RR$-algebras, we conclude that $\phi_g(i) = \pm i$. Hence, any $\phi_g$ is either $\CC$-linear or $\CC$-antilinear. We put
$$
N := \left\{g \in G \, \big| \, \phi_g \; \mbox{is} \; \CC\mbox{\rm -linear}\right\}.
$$
By Lemma \ref{L:ActionDVR} we have: $N = \langle \varrho \, \big|\, 
\varrho^n = e\rangle \cong \ZZ_n$ for some $\varrho \in N$ and $n = |N|$. Moreover, there exists 
a local parameter $w \in \idm$ such that $\phi_\varrho(w) = \xi w$, where $\xi = \exp\left(\dfrac{2\pi i}{n}\right)$. The same proof allows one to construct $w \in \idm$ such that $\phi_g(w) = \xi_g w$ for any $g \in G$, where $\xi_g \in \CC^\ast$. 

If $N = G$ then we are done and have the case (a). Now assume that there exists $\sigma \in G \setminus N$. Then $\sigma^2 \in N$ and $\phi_\sigma(\alpha) = \bar{\alpha}$ for any $\alpha \in \CC$. Moreover, for any $g \in G \setminus N$ we have: $g \sigma \in N$. Hence, the elements $\varrho$ and $\sigma$ generate the group $G$. 

 We know that $\phi_\sigma(w) = \alpha w$ for some $\alpha \in \CC$ such that $|\alpha|^2 = 1$. Let $\zeta \in \CC^\ast$ be such that $\zeta^2 = \alpha$. Then $\phi_\sigma(\zeta w) = \bar\zeta \alpha w = \zeta w$. Replacing $w$ by $\zeta w$ we obtain: 
\begin{equation*}
\left\{
\begin{array}{c}
\; \; \, \phi_{\varrho}(\alpha) \, = \alpha \; \, \mbox{\rm for}\;  \alpha \in \CC \; \mbox{\rm and} \; 
\phi_{\varrho}(w) = \xi w, \\
\phi_{\sigma}(\alpha) = \bar{\alpha} \; \mbox{\rm for} \; \alpha \in \CC \; \mbox{\rm and} \; \phi_{\sigma}(w) = w. \\
\end{array}
\right.
\end{equation*}
The last formula implies that $\phi_{\sigma^2} = \id$. Since $\phi$ is injective, we conclude that $\sigma^2 = e$.  Analogously, we have $\phi_{\sigma \varrho} = 
\phi_{\varrho^{-1} \sigma}$, hence $\sigma \varrho = \varrho^{-1} \sigma$ and $G$ is a dihedral group. 
\end{proof}

\begin{lemma}\label{L:DihedralReduction} Let $G = D_n$ be the dihedral group given by the presentation (\ref{E:DihedralPresentation}), $N = \langle \varrho \rangle \cong \ZZ_n$ and $C = \langle \sigma\rangle \cong \ZZ_2$. Let $A$ be a ring and $G \stackrel{\phi}\lar \Aut(A)$ be a group homomorphism. Then the following results are true. 
\begin{enumerate}
\item[(a)] We have a group homomorphism $C \stackrel{\psi}\lar \Aut\bigl(A[N, \phi]\bigr)$, where
\begin{equation}\label{E:Action}
\psi_\sigma\bigl(a[h]\bigr) = \phi_\sigma(a)[h^{-1}] \; \mbox{\rm for any} \; a\in A, h\in N.
\end{equation}
\item[(b)]  There is a ring isomorphism
\begin{equation}\label{E:DihedralAction}
\bigl(A[N, \phi]\bigr)[C, \psi] \cong A[G, \phi], \; (a[h])\{\sigma^m\} \mapsto a[h\sigma^m] \; \mbox{\rm for any} \; a\in A, h\in N \;\mbox{\rm and} \; m \in \NN.
\end{equation} 
\end{enumerate}
Moreover, if $\kk$ is a field and $G$ acts on $A$ by $\kk$-algebra automorphisms then the action $\psi$ is also $\kk$-linear and (\ref{E:DihedralAction}) is an isomorphism of $\kk$-algebras. 
\end{lemma}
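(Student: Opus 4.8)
The plan is to prove both parts by direct computation, exploiting the semidirect product decomposition $D_n = N\rtimes C$: every element of $G$ is uniquely of the form $h\sigma^m$ with $h\in N$ and $m\in\{0,1\}$, and the only structural relation ever needed is $\sigma h=h^{-1}\sigma$ for $h\in N$, equivalently $\phi_{\sigma h}=\phi_{h^{-1}\sigma}$ since $\phi$ is a genuine (covariant) group homomorphism. Note that injectivity of $\phi$ plays no role here.

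For part (a), I would first note that $a\mapsto\phi_\sigma(a)$ is additive, so $\psi_\sigma$, prescribed on the $A$-basis $\{[h]\mid h\in N\}$ of $A[N,\phi]$, extends uniquely to an additive endomorphism fixing $1[e]$. Multiplicativity need only be checked on basis elements: for $a[h],b[k]$ with $h,k\in N$ one computes
\[
\psi_\sigma\bigl(a[h]\cdot b[k]\bigr)=\phi_\sigma(a)\,\phi_\sigma\phi_h(b)\,[(hk)^{-1}],
\qquad
\psi_\sigma(a[h])\cdot\psi_\sigma(b[k])=\phi_\sigma(a)\,\phi_{h^{-1}}\phi_\sigma(b)\,[h^{-1}k^{-1}],
\]
and these agree because $N$ is abelian, so $(hk)^{-1}=h^{-1}k^{-1}$, and because $\phi_\sigma\phi_h=\phi_{\sigma h}=\phi_{h^{-1}\sigma}=\phi_{h^{-1}}\phi_\sigma$. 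Thus $\psi_\sigma$ is a ring endomorphism; since $\psi_\sigma^2(a[h])=\phi_{\sigma^2}(a)[h]=a[h]$ it is an involution, hence an automorphism, and therefore $e\mapsto\id$, $\sigma\mapsto\psi_\sigma$ defines a group homomorphism $C\to\Aut\bigl(A[N,\phi]\bigr)$.

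For part (b), I would define $\Theta\colon\bigl(A[N,\phi]\bigr)[C,\psi]\to A[G,\phi]$ on basis elements by $(a[h])\{\sigma^m\}\mapsto a[h\sigma^m]$ and extend $A$-linearly; since $\{h\sigma^m\mid h\in N,\,0\le m\le1\}$ enumerates $G$ (of cardinality $2n=|G|$), $\Theta$ is automatically a unit-preserving $A$-module isomorphism, and only multiplicativity remains. Using the skew product $(u\{\sigma^m\})\cdot(v\{\sigma^l\})=u\,\psi_{\sigma^m}(v)\,\{\sigma^{m+l}\}$ at the outer level, the case $m=0$ is immediate, and for $m=1$ one gets
\[
\bigl((u[h])\{\sigma\}\bigr)\cdot\bigl((v[k])\{\sigma^l\}\bigr)=\bigl(u\,\phi_h\phi_\sigma(v)\,[hk^{-1}]\bigr)\{\sigma^{l+1}\},
\]
whose image under $\Theta$ is $u\,\phi_h\phi_\sigma(v)\,[hk^{-1}\sigma^{l+1}]$, while
\[
\Theta\bigl((u[h])\{\sigma\}\bigr)\cdot\Theta\bigl((v[k])\{\sigma^l\}\bigr)=(u[h\sigma])\cdot(v[k\sigma^l])=u\,\phi_{h\sigma}(v)\,[h\sigma k\sigma^l]
\]
in $A[G,\phi]$; these coincide because $\phi_{h\sigma}=\phi_h\phi_\sigma$ and $h\sigma k\sigma^l=hk^{-1}\sigma^{l+1}$ by $\sigma k=k^{-1}\sigma$. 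Hence $\Theta$ is a ring isomorphism. The $\kk$-linear addendum is then free: if $G$ acts by $\kk$-algebra automorphisms then $\phi_\sigma$, hence $\psi_\sigma$, hence $\Theta$, is $\kk$-linear, so $\Theta$ is an isomorphism of $\kk$-algebras.

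The computation involves no real difficulty; the one thing to watch is the bookkeeping — invoking commutativity of $N$ and the relation $\sigma h=h^{-1}\sigma$ at precisely the right places, and applying the skew-multiplication rule $a[f]\cdot b[g]=a\phi_f(b)[fg]$ consistently at both the inner ($A[N,\phi]$) and outer ($C$) levels. I expect the main, and mild, obstacle to be nothing more than this.
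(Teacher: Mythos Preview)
Your proposal is correct and is precisely the straightforward verification the paper has in mind: the paper's own ``proof'' consists of the sentence ``Both results can be verified by a straightforward computation and are therefore left to an interested reader as an exercise.'' You have supplied that computation, and the bookkeeping (abelianness of $N$, the relation $\sigma h=h^{-1}\sigma$, and the inner/outer skew multiplication rules) is handled correctly.
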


\smallskip
\noindent
\emph{Comment to the proof}. Both results can be verified by a straightforward computation and are therefore left to an interested reader as an exercise. \qed 

\begin{proposition}\label{P:SkewProducts} For any $n\in \NN$, let $G = D_n$ be the corresponding dihedral group acting on $A = \CC\llbracket z\rrbracket$ by $\RR$-algebra homomorphisms given by the formula (\ref{E:action}). Then we have an isomorphism of $\RR$-algebras
\begin{equation}
A[G, \phi] \cong M_2\bigl(H_n(O)\bigr),
\end{equation}
where $O= \RR\llbracket t^n\rrbracket$. 
\end{proposition}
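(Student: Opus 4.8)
\noindent
The plan is to recognise $H := A[G,\phi]$ as a standard hereditary order over its centre $O$ and then to determine its combinatorial invariants by inspecting $H/\mathsf{rad}(H)$. From the normal form $(\ref{E:action})$, $\varrho$ fixes $\CC$ and scales the local parameter while $\sigma$ acts on $\CC$ by complex conjugation and fixes the parameter; hence $A^N = \CC\llbracket z^n\rrbracket$ and $O = A^G = (\CC\llbracket z^n\rrbracket)^{\langle\sigma\rangle} = \RR\llbracket z^n\rrbracket$, a complete discrete valuation ring, which we identify with $\RR\llbracket t^n\rrbracket$. By Theorem \ref{T:SkewGroupOrders}, applied to the commutative connected Dedekind $\RR$-algebra $A = \CC\llbracket z\rrbracket$ carrying the faithful $G$-action, $H$ is a hereditary order with centre $O$ and rational hull $M_{2n}(K)$, where $K = \mathsf{Quot}(O)$. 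Since $O$ is a complete DVR, the structure theorem for hereditary orders recalled above yields $H \cong H(A', \vec{p})$ for a maximal order $A'$ with centre $O$ and a vector $\vec{p} = (p_1,\dots,p_r)$. Tensoring the matrix description of $H(A',\vec{p})$ with $K$ identifies its rational hull with $M_{p_1+\dots+p_r}(A'_K)$, where $A'_K$ is the (central division) rational hull of $A'$; comparison with $M_{2n}(K)$ together with the uniqueness part of the Wedderburn theorem forces $A'_K \cong K$ — hence $A' \cong O$, the unique maximal order in $K$ with centre $O$ — and $p_1 + \dots + p_r = 2n$. In particular the residue division ring of $A'$ is $D := O/\mathsf{rad}(O) \cong \RR$, so by part (iv) of the structure theorem $H/\mathsf{rad}(H) \cong \prod_{i=1}^{r} M_{p_i}(\RR)$.

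\noindent
Next I would compute $H/\mathsf{rad}(H)$ directly. As $\mathsf{char}(\CC)=0$, the order $A$ is complete for its $\mathfrak{m}$-adic topology ($\mathfrak{m}=(z)$), hence $H$ is $\mathfrak{m}$-adically complete and $H/\mathfrak{m}H \cong (A/\mathfrak{m})[G,\bar{\phi}] = \CC[G,\bar{\phi}]$, a finite-dimensional and hence Artinian $\CC$-algebra which is semisimple by a Maschke-type argument ($|G|$ being invertible in $\CC$); therefore $\mathsf{rad}(H) = \mathfrak{m}H$ and $H/\mathsf{rad}(H) \cong \CC[G,\bar{\phi}]$. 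Here $\bar{\phi}$ is the induced $G$-action on $\CC = A/\mathfrak{m}$: the subgroup $N$ acts trivially (as $\varrho$ fixes $\CC$) and $\sigma$ acts by complex conjugation. Writing $\CC[G,\bar{\phi}] \cong (\CC[N])[\langle\sigma\rangle,\psi]$ as in Lemma \ref{L:DihedralReduction}, with $\psi_\sigma(a[h]) = \bar{a}[h^{-1}]$, and identifying $\CC[N] \cong \CC^{n}$ via evaluation at the $n$-th roots of unity, one checks that $\psi_\sigma$ becomes \emph{coordinatewise} complex conjugation on $\CC^{n}$: the inversion $h \mapsto h^{-1}$ on $N$ permutes the roots of unity $\zeta^j \leftrightarrow \zeta^{-j}$, and this permutation is exactly undone by the conjugation of coefficients. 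Consequently $\CC[G,\bar{\phi}] \cong \prod_{j=1}^{n}\CC[\ZZ_2,\mathrm{conj}] \cong M_2(\RR)^n$ by Lemma \ref{L:SkewGroupOfFields}.

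\noindent
Comparing $\prod_{i=1}^{r} M_{p_i}(\RR) \cong M_2(\RR)^n$ and using uniqueness of the Wedderburn decomposition gives $r = n$ and $p_1 = \dots = p_n = 2$, so $H \cong H\bigl(O,(2,\dots,2)\bigr)$ with $n$ entries equal to $2$. Finally, a straightforward reindexing of matrix entries (grouping the $2n$ rows and columns into $n$ consecutive pairs, so that the $(i,j)$-block becomes the $2\times 2$ matrices over the $(i,j)$-entry of the $n\times n$ pattern) exhibits $H\bigl(O,(2,\dots,2)\bigr)$ as $M_2\bigl(H_n(O)\bigr)$, which is the assertion.

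\noindent
The step I expect to be the main obstacle is the explicit identification of the induced $\ZZ_2$-action on $H/\mathsf{rad}(H) \cong \CC^{n}$ as coordinatewise conjugation: this is precisely where the particular shape of $(\ref{E:action})$ — $\sigma$ fixing the local parameter and inverting $\varrho$ — is used, and it is what produces matrix factors over $\RR$ rather than spurious $\CC$-factors and hence forces every $p_i$ to equal $2$. For this complete-local situation one could alternatively bypass Theorem \ref{T:SkewGroupOrders} by combining Lemmas \ref{L:ActionDVR} and \ref{L:DihedralReduction} with the standard fact that the skew group ring of a hereditary order by a finite group of invertible order is again hereditary.
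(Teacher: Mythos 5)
Your argument is correct, but it follows a genuinely different route from the paper. The paper's proof is constructive: it splits off the rotation subgroup via Lemma \ref{L:DihedralReduction}, uses the normal form $A[N,\phi]\cong\widehat{\CC\bigl[\vec{C}_n\bigr]}$ from Lemma \ref{L:ActionDVR}, checks that the induced involution $\psi_\sigma$ fixes the idempotents $\varepsilon_k$ (and the arrows), hence acts by coefficientwise complex conjugation on the completed cyclic-quiver algebra, and then concludes directly with $\CC[C,\psi]\cong M_2(\RR)$ from Lemma \ref{L:SkewGroupOfFields} --- this is exactly the alternative you sketch in your closing sentence. You instead argue by invariants: Theorem \ref{T:SkewGroupOrders} makes $H=A[G,\phi]$ a hereditary order over the complete DVR $O$ with rational hull $M_{2n}(K)$, the classification of hereditary orders over a complete DVR gives $H\cong H(A',\vec{p})$, the rational hull forces $A'\cong O$, and the semisimple quotient $H/\mathsf{rad}(H)\cong\CC[G,\bar\phi]\cong M_2(\RR)^n$ forces $\vec{p}=(2,\dots,2)$, after which $H(O,(2,\dots,2))\cong M_2\bigl(H_n(O)\bigr)$ by regrouping indices. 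The computational kernel is the same in both proofs --- the observation that $\psi_\sigma$ fixes the idempotents, i.e.\ that inversion on $N$ is undone by conjugation of coefficients --- but you only need it modulo the radical, at the price of invoking the Harada structure theorem (note that for the uniqueness of the invariants $(A',\vec p)$ and for part (iv) you must use it in its standard form, with $A'$ a maximal order in a \emph{division} algebra, which is what your parenthetical ``central division'' silently assumes); the paper's route avoids the classification and produces the isomorphism explicitly. Two cosmetic points: $\CC[G,\bar\phi]$ is a finite-dimensional $\RR$-algebra, not a $\CC$-algebra (the $\CC$-coefficients are not central because $\sigma$ acts antilinearly), and the completeness of $A$ has nothing to do with $\mathsf{char}(\CC)=0$ --- the characteristic is only needed for the Maschke-type semisimplicity of $H/\mathfrak{m}H$; neither slip affects the argument, since your identification $\mathsf{rad}(H)=\mathfrak{m}H$ via completeness and semisimplicity of the quotient is sound.
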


\begin{proof} By Lemma \ref{L:DihedralReduction} we have: $A[G, \phi] \cong \bigl(A[N, \phi]\bigr)[C, \psi]$. Recall that 
we have an isomorphism of $\CC$-algebras $A[N, \phi] \stackrel{\mu}\lar \widehat{\CC\bigl[\vec{C}_n\bigr]}$ given by the formula (\ref{E:IsomorphismCyclicQuiver}). For any $\zeta \in \CC^\ast$ with $|\zeta| = 1$ we have: $\psi_{\sigma}(\zeta) = \bar{\zeta} = \zeta^{-1}$. For all $h \in N$ we have:  $\psi_\sigma([h]) = \bigl[h^{-1}\bigr]$. Hence, 
$$ 
\psi_{\sigma}\bigl(\varepsilon_k\bigr) = \psi_{\sigma}\left(\frac{1}{n} \sum\limits_{j=0}^{n-1} \zeta_k^j [\varrho^j]\right) = 
\sum\limits_{j=0}^{n-1} \zeta_k^{-j} [\varrho^{-j}] = \varepsilon_k
$$
for any $1 \le k \le n$. It follows that the induced action $\widehat{\CC\bigl[\vec{C}_n\bigr]} \stackrel{\psi_\sigma}\lar 
\widehat{\CC\bigl[\vec{C}_n\bigr]}
$ is given by the complex conjugation. 

\smallskip
\noindent
According to Lemma \ref{L:SkewGroupOfFields} we have: $\CC[C, \psi] \cong M_{2}(\RR)$, where $\CC \stackrel{\psi_\sigma}\lar \CC, \alpha \mapsto \bar{\alpha}$ is the complex conjugation. As a consequence, we get isomorphisms of $\RR$-algebras 
$$
A[G, \phi] \cong \widehat{\CC\bigl[\vec{C}_n\bigr]}\bigl[C, \psi\bigr] \cong M_2\bigl(H_n(O)\bigr),
$$
what proves the statement. 
\end{proof}

\begin{definition}\label{D:Orbipoints} Let $Y$ be a complete regular  curve over $\CC$, which we view as a scheme  over $\RR$.  Let 
$G \subseteq \Aut_{\RR}(Y)$ be a finite subgroup and $Y \stackrel{\pi} \lar Y/G =: X$ be the canonical projection. For $y \in Y_\circ$ let $N\subseteq G$ be the corresponding stabilizer group, $A = \widehat{\kO}_y$  and $x = \pi(y)$. We suppose that $|N| \ge 2$. 
\begin{enumerate}
\item[(a)] Assume that $N$ acts on $A$ by $\CC$-linear automorphisms. Then we say that $x \in X$ has type $n$ for $n = |N|$ (note that according to Proposition \ref{P:ActionTypes} we have $G \cong \ZZ_n$). 
\item[(b)] Assume that $N$ contains an element which acts as the complex conjugation on $A$. Then $N \cong D_n$   for some $n \in N$ (see again Proposition \ref{P:ActionTypes}) and we say that $x$ has type $\bar{n}$ provided $n \ge 2$.
\end{enumerate}
\end{definition}

\begin{remark} In the notation of Definition \ref{D:Orbipoints}, let $\XX = 
Y \hspace{-0.5mm}\sslash \hspace{-0.5mm}G = (X, \kH)$ be the corresponding non-commutative hereditary curve. Then points of $X$  of types $n$ and $\bar{n}$ for $n \in \NN_{\ge 2}$ are precisely those ones for which the order $\widehat{\kH}_x$ is not maximal; see Proposition \ref{P:SkewProducts}.
\end{remark}

\begin{remark} There are precisely three pairwise non-isomorphic real projective curves of genus zero:
\begin{enumerate}
\item[(a)] The real projective line $X_{\mathsf{re}} = \PP^1_{\RR}$. The corresponding tame bimodule $\Lambda_{\mathsf{re}}$ (see (\ref{E:TameBimodule})) is the path algebra of the Kronecker quiver:
$$
\Lambda_{\mathsf{re}} =
\RR\Bigl[\xymatrix{
\bu  \ar@/^/[r] \ar@/_/[r]  & \bu
}
\Bigr] \cong \left(
\begin{array}{cc}
\RR & \RR \oplus \RR \\
0   & \RR\\
\end{array}
\right).
$$
\item[(b)] The complex  projective line $X_{\mathsf{co}} = \PP^1_{\CC}$. The corresponding tame bimodule $\Lambda_{\mathsf{co}}$  is the path algebra of the Kronecker quiver over $\CC$:
$$
\Lambda_{\mathsf{co}} =
\CC\Bigl[\xymatrix{
\bu  \ar@/^/[r] \ar@/_/[r]  & \bu
}
\Bigr] \cong \left(
\begin{array}{cc}
\CC & \CC \oplus \CC \\
0   & \CC\\
\end{array}
\right).
$$
\item[(c)] The real conic $X_{\mathsf{qt}} = \mathsf{Proj}\bigl(\RR[x,y,z]/(x^2 + y^2 +z^2)\bigr)$. The corresponding tame bimodule is 
$$
\Lambda_{\mathsf{qt}} =
 \left(
\begin{array}{cc}
\RR & \HH \\
0   & \HH\\
\end{array}
\right),
$$
see \cite[Proposition 7.5]{Lenzing}.
\end{enumerate}
\end{remark}

\smallskip
\noindent
Let $Y'$ be a complete geometrically integral  regular   curve over $\RR$ and 
$$Y = \Spec{(\CC)} \times_{\Spec(\RR)} Y'.$$ Then the Galois group
$\mathsf{Gal}(\CC/\RR) = \langle \sigma \, \big| \, \sigma^2 = e\bigr\rangle$ canonically acts on $Y$ viewed as a scheme over $\RR$. In all examples below $\sigma$ acts as the complex conjugation.

Analogously to  Example \ref{E:EllipticQuotients1} and Example \ref{E:EllipticQuotients2}, we can consider finite group actions  on \emph{complex} elliptic curves viewed as schemes over $\RR$.

\begin{example}
Let $Y_\lambda = \mathsf{Proj}\bigl(\CC[x,y,z]/(y^2 z - (x-\lambda z)(x^2 + z^2))\bigr)$ for some $\lambda \in \RR$. Then the dihedral group $G = D_2 = \langle \sigma, \varrho\rangle \cong \ZZ_2 \times \ZZ_2$ acts on $Y_\lambda$ by the rule
$(x: y: z) \stackrel{\varrho}\mapsto (x: -y: z)$. The fixed points of this action are $(\lambda:0:1)$, $(0:i:1)$ and $(0:1:0)$ (note that $\sigma$ permutes $(0:i:1)$ and $(0:-i:1)$). The stabilizer of $(\lambda:0:1)$ and $(0:1:0)$ is the group $G$ itself, whereas the stabilizer of $(0:i:1)$ is 
$\langle \varrho\rangle \cong \ZZ_2$. 

We have: $Y_\lambda/G \cong X_{\mathsf{re}}$. Moreover, one can naturally choose homogeneous coordinates $(u: v)$ on $X_{\mathsf{re}}  = \mathsf{Proj}\bigl(\RR[u,v]\bigr)$ such that for the canonical projection $Y_{\lambda} \stackrel{\pi}\lar X_{\mathsf{re}}$ we have: 
$\pi(\lambda:0:1) = (\lambda:1)$ and $\pi(0:1:0) = (1:0)$. The point $ o = \pi(i:0:1) \in X_{\mathsf{re}}$ corresponds to the homogeneous ideal $u^2 + v^2 \in \RR[u,v]$. 

The above discussion shows that the corresponding non-commutative hereditary curve $\XX$ is of type $\bigl(X_{\mathsf{re}}, (2, \bar{2},  \bar{2})\bigr)$. More precisely, $\XX$ has 
\begin{enumerate}
\item[(a)] One special complex point $o$ of weight $2$,
\item[(b)] Two special real  points $(\lambda: 1)$ and $(1: 0)$ of weight $2$.
\end{enumerate}
We have an exact equivalence of triangulated categories
 \begin{equation*}
 D^b\bigl(\Coh^G(Y_\lambda)\bigr) \lar D^b(\Pi_{Y_\lambda, G}\mathsf{-mod})
 \end{equation*}
 for an appropriate squid algebra $\Pi_{Y_\lambda, G}$ of the form (\ref{E:Squid}).
\end{example}

\begin{example} Let $Y = \mathsf{Proj}\left(\CC[x, y,z]/(zy^2 - x^3 - z^3)\right)$ and $G = \left\langle \sigma, \varrho \right\rangle \cong D_6$. Then $G$ acts on $Y$ by the rule $\varrho(x:y:z) = (\xi x: -y: z)$, where $\xi = \exp\left(\dfrac{2\pi i}{3}\right)$. The special orbits of the $G$-action are those of 
\begin{enumerate}
\item[(a)]  the point $(-1:0:1)$, whose stabilizer  is $D_2$;
\item[(b)]  the point $(0:1:1)$, whose stabilizer  is $D_3$; 
\item[(c)]  the point $(0:1:0)$, whose stabilizer  is $D_6$.
\end{enumerate}
The corresponding hereditary curve $\XX$ has type $\bigl(X_{\mathsf{re}}, (\bar{2}, \bar{3},  \bar{6})\bigr)$. Since the group $\Aut_{\RR}(X_{\mathsf{re}})$ acts transitively on triples of distinct closed real points of $X_{\mathsf{re}}$, we may assume that the special points of $\XX$ are $(0:1)$, $(1:0)$ and $(1: 1)$, respectively. 

Now, let $\widetilde\varrho = \varrho^4$. Consider the subgroup $D_3 \cong N = \langle \sigma, \widetilde\varrho\rangle \subset G$. Then $N$ acts on $Y$ by the rule $\widetilde{\varrho}(x:y:z) = (\xi x: y: z)$. Again, we have $Y/N \cong X_{\mathsf{re}}$. The points $(0:1:1)$, $(0:-1:1)$ and $(0:1:0)$ are stabilized by $N$. Hence, the corresponding hereditary curve $\XX$ hat type 
$\bigl(X_{\mathsf{re}}, (\bar{3}, \bar{3},  \bar{3})\bigr)$.
\end{example}

\begin{example} Consider now  
$Y = \mathsf{Proj}\left(\CC[x, y,z]/(zy^2 - x^3 + z^3)\right)$ and $D_3 \cong N = \langle \sigma, \widetilde\varrho\rangle$, where  $\widetilde{\varrho}(x:y:z) = (\xi x: y: z)$ for $\xi = \exp\left(\dfrac{2\pi i}{3}\right)$. Again, we have $Y/N \cong X_{\mathsf{re}}$.  However, this time $\sigma(0:i:1) = (0:-i:1)$. As a consequence, we now have only two special orbits of the $N$-action on $Y$:
\begin{enumerate}
\item[(a)] Those of $(0:i:1)$ whose stabilizer is $\ZZ_3$
\item[(b)] Those of $(0:1:0)$ whose stabilizer is $D_3$
\end{enumerate}
As a consequence, the corresponding hereditary curve $\XX$ has type 
$\bigl(X_{\mathsf{re}}, (3,  \bar{3})\bigr)$.
\end{example}

\begin{example} Let $A = \CC[x, y]/(y^2 + (x^2 + \lambda)^2 +1)$ for some $\lambda \in \RR$ and $Y = Y_\lambda$ be the smooth regular projective curve over $\CC$ with is the completion of $\breve{Y} = \Spec(A) \subset \mathbbm{A}^2_\CC$. The dihedral group $D_2 = \langle \sigma, \varrho\rangle$ operates on $A$ by the rule
$x \stackrel{\varrho}\mapsto  -x$, $y \stackrel{\varrho}\mapsto y$.  It is clear that this action on $\Spec(A)$ can be extended to an action on $Y$. Since $A^G = \RR[w, y]/(z^2 + y^2 +1)$ for $w = x^2 + \lambda$, we may conclude that $Y/G \cong X_{\mathsf{qt}}$. 

The action of $G$ on $Y$ has two special orbits. 
 The first one is the orbit of the point $(0, i \sqrt{1+\lambda^2}) \in \breve{Y}$. The corresponding stabilizer is $\langle\varrho \rangle \cong \ZZ_2$.

\smallskip
\noindent
To describe the second orbit, consider the closure $\bar{Y}$ of $Y$ in $\PP^2_\CC$. We have: $\bar{Y} = 
\mathsf{Proj}\bigl(\CC[x, y, z]/(y^2 z^2 + (x^2 + \lambda z^2)^2 + z^4)\bigr)$. Note that the point $o = (0:1:0) \in \bar{Y}$  is singular. The curve $Y$ is the normalization of $\bar{Y}$. Let $Y \stackrel{\nu}\lar \bar{Y}$ be the normalization map. Then $\nu^{-1}(o) = \left\{o_+, o_-\right\}$  and $\sigma(o_\pm) = o_{\mp}$. A straightforward local computation shows that 
the stabilizer of $o_+$ is $\langle\varrho\rangle \cong \ZZ_2$. 
It follows  that the corresponding hereditary curve $\XX$ has type 
$\bigl(X_{\mathsf{qt}}, (2, 2)\bigr)$.
\end{example}

A systematic way to construct finite group actions on complex elliptic curves viewed as real algebraic schemes comes from wallpaper groups. To explain this construction, recall that a \emph{Klein surface} $\dX$ is a \emph{dianalytic manifold} (possibly, with non-empty boundary)  of complex dimension one; see \cite{AllingGreenleafFirst, AllingGreenleaf, BEGG} for the details. Klein surfaces naturally form a category. An important result due to Alling and Greenleaf
 asserts that the category of compact Klein surfaces is equivalent 
to the category of regular complete curves  over $\RR$; see 
\cite[Theorem 3]{AllingGreenleafFirst}, 
\cite[Section II.3]{AllingGreenleaf} as well as \cite[Appendix A]{BEGG} for further elaborations. The key point is the following: the set $\mathbbm{M}(\dX)$ of all meromorphic functions on a connected Klein surface $\dX$  is an algebraic function field of one variable over $\RR$ (i.e.~a finitely generated field extension of $\RR$ of transcendence degree one); see \cite[Theorem 1]{AllingGreenleafFirst} as well as 
\cite{AllingGreenleaf}. The field $\mathbbm{M}(\dX)$ defines a uniquely determined (up to isomorphisms) regular projective curve $X$ over $\RR$. The main point is to prove that the correspondence $\dX \mapsto \mathbbm{M}(\dX)$ defines a contravariant equivalence between the category of connected Klein surfaces and the category of real algebraic function fields in one variable. 

\smallskip
\noindent
 In particular, in genus zero we have: 
\begin{enumerate}
\item[(a)] the closed disc $\mathfrak{D} = \left\{z \in \CC \, \big|\, |z| \le 1\right\}$ has the function field $\RR(z)$ and corresponds to the curve $X_{\mathsf{re}}$;
\item[(b)] the Riemann sphere $\mathfrak{S}$  has the function field $\CC(z)$ and corresponds to  $X_{\mathsf{co}}$;
\item[(c)] the real projective plane  $\mathfrak{P}$
 has the function field $\RR(y)[x]/(x^2 + y^2 +1)$ and corresponds to the 
curve $X_{\mathsf{qt}}$.  
\end{enumerate}

Recall that the Euclidean group $\mathsf{E}_2 = \mathsf{O}_2(\RR) \ltimes \RR^2$ is the group of isometries of the Euclidean plane $\RR^2 = \CC$. For any $(A, \vec{v}) \in \mathsf{E}_2$ we have the corresponding automorphism 
$$
\RR^2 \lar \RR^2, \vec{x} \mapsto A \vec{x} + \vec{v},
$$
which is either analytic (if $\det(A) = 1$) or anti-analytic (if $\det(A) = -1$) with respect to the standard complex structure on $\RR^2 = \CC$. 

A \emph{wallpaper group} $W$ (also called plane crystallographic group) is a discrete cocompact subgroup of  $\mathsf{E}_2$;
see  for example \cite{Klemm, Montesinos}. Let $T$ be the subgroup of $W$ consisting of all translations. Bieberbach's Theorem asserts that  $T \lhd W$ is a normal subgroup, $T \cong \ZZ^2$                                                         
and $G:= W/T \subset \mathsf{O}_2(\RR)$ is a finite group (called \emph{point group} of $W$). Obviously, 
$\dY  = \CC/T$ is a complex torus and the point group $G$ acts on $\mathfrak{Y}$ by dianalytic automorphisms. The quotient $\dX_W = \RR^2/W =  \dY/G$ is a compact flat surface orbifold; see \cite[Appendix A.3]{Montesinos}.

Let  $\mathfrak{Z}$ be a surface orbifold and $p \in \mathfrak{Z}$ be its singular point. Then $p$  belongs to a one of the following three classes:
\begin{enumerate}
\item[(a)] Mirror point, if it admits a neighbourhood isomorphic to $\RR^2/\ZZ_2$, where the generator of $\ZZ_2$ acts by a reflection (say, with respect to  the x-axis)
\item[(b)] Elliptic point of order $n\in \NN_{\ge 2}$ (denoted by $n$), if it admits a neighbourhood isomorphic to $\RR^2/\ZZ_n$, where $\ZZ_n$ acts on $\RR^2$ by rotations.
\item[(c)] Corner reflector point of order $n\in \NN_{\ge 2}$ (denoted by $\bar{n}$), if it admits a neighbourhood isomorphic to 
$\RR^2/D_n$  with respect to the natural action of the dihedral group on $\RR^2$. 
\end{enumerate}
If $p \in \dX_W$ is a mirror point then it is just an ordinary point of the boundary of $\dX_W$. An essential information about $\dX_W$ (viewed as an surface orbifold) is governed by its diffeomorphism type and  by the number/position  of its elliptic and corner reflector points. 

Let $\mathbbm{M}$ be the field of meromorphic functions on $\dY$. Then we have a natural  group embedding 
$G \subset  \Aut_{\RR}(\mathbbm{M})$ induced by the action of $G$ on $\dY$ (viewed as a Klein surface). Let $Y$ be the complex elliptic curve corresponding to $\dY$. Then we have a group embedding $G \subset  \Aut_{\RR}(Y)$. Let $X = Y/G$ and 
$\XX = \XX_W = Y \hspace{-1mm}\sslash \hspace{-1mm} G$ be the corresponding hereditary curve. The key Proposition \ref{P:ActionTypes} as well as the aforementioned Alling--Greenleaf equivalence of categories allows one to relate the datum $(X, \rho)$ defining $\XX$ with the orbifold notation  of the underlying wallpaper group $W$. 

\begin{theorem}\label{T:Wallpapers}
Let $W$ be a wallpaper group for which  $g(X) = 0$. Then there exists a real squid  algebra $\Pi_W$ of \emph{tubular type} and an exact equivalence of triangulated categories 
\begin{equation}\label{E:Wallpaper}
D^b\bigl(\Coh(\XX_W)\bigr) \simeq D^b\bigl(\Pi_W\mathsf{-mod}\bigr).
\end{equation}
\end{theorem}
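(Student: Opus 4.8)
The plan is to recognise $\XX_W$ as one of the hereditary curves $\EE(X,0,\rho)$ covered by Theorem~\ref{T:TiltingMain}, where $X$ is a genus-zero curve over $\RR$ and $\rho$ is the weight function whose non-trivial values are the orders of the elliptic points and corner reflectors of the orbifold $\dX_W$; an exact equivalence as in~(\ref{E:Wallpaper}) then follows with $\Pi_W$ the squid algebra~(\ref{E:Squid}) of that datum, and a final numerical computation identifies its type as tubular.

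First I would determine the underlying commutative curve and the Brauer class. By construction $\XX_W = (X,\kH)$ is a hereditary curve, so $X = Y/G$ is regular (Lemma~\ref{L:HeredOrders}); being complete of genus zero over $\RR$, it is one of $X_{\mathsf{re}} = \PP^1_{\RR}$, $X_{\mathsf{co}} = \PP^1_{\CC}$ or $X_{\mathsf{qt}} = \mathsf{Proj}\bigl(\RR[x,y,z]/(x^2+y^2+z^2)\bigr)$. If $\LL$ is the function field of $Y$, then $\KK = \LL^G$ with $G$ acting faithfully, so by Lemma~\ref{L:SkewGroupOfFields} the generic fibre of $\kK\otimes_\kO\kH$ is $\LL[G,\widehat{\gamma}]\cong M_n(\KK)$, whence $\eta_{\XX_W} = 0$. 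For each of the three curves $X$ the class $\eta = 0$ is exceptional: the associated homogeneous curve is $(X,\kO_X)$, which carries a tilting bundle — the Kronecker bundle $\kO(-1)\oplus\kO$ for $X_{\mathsf{re}}$ and $X_{\mathsf{co}}$, and Kussin's tilting bundle with endomorphism ring $\HH$ for $X_{\mathsf{qt}}$ (see the proof of Theorem~\ref{T:Equivariant}).

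The core of the argument, and what I expect to be the main obstacle, is the local analysis at each closed point $x\in X$, which simultaneously produces $\rho$. Fix $y\in\pi^{-1}(x)$ with stabilizer $N_y\subseteq G$. Since $\pi^{-1}(x)$ is a single $G$-orbit, Lemma~\ref{L:ReductionSkewGroupAction} gives a Morita equivalence $\widehat{\kH}_x\sim\widehat{\kO}_{Y,y}[N_y,\widehat{\gamma}_y]$ with $\widehat{\kO}_{Y,y}\cong\CC\llbracket z\rrbracket$. Proposition~\ref{P:ActionTypes} then dichotomizes: either $N_y$ acts $\CC$-linearly, so $N_y\cong\ZZ_n$ and Lemma~\ref{L:ActionDVR} gives $\widehat{\kH}_x\sim H_n\bigl(\CC\llbracket z^n\rrbracket\bigr)$ ($x$ being an elliptic point of $\dX_W$ of order $n$); or $N_y$ contains a complex conjugation, so $N_y\cong D_n$ and Proposition~\ref{P:SkewProducts} gives $\widehat{\kH}_x\sim M_2\bigl(H_n(\RR\llbracket t^n\rrbracket)\bigr)\sim H_n\bigl(\RR\llbracket t^n\rrbracket\bigr)$ ($x$ being a corner reflector of order $n$, the value $n = 1$ covering ordinary interior and mirror points, where $\widehat{\kH}_x$ is maximal). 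In every case $\widehat{\kH}_x$ is Morita equivalent to the standard hereditary order $H_{\rho(x)}(\widehat{\kO}_{X,x})$ with $\rho(x) := n$, which matches $H_{\rho(x)}(\widehat{\kR}_x)$ for the commutative (since $\eta = 0$) maximal order of the homogeneous model; setting $\rho(x) = 1$ at the remaining points gives a weight function, only finitely many points of $Y$ being ramified. Hence Theorem~\ref{T:Weighting} yields $\Coh(\XX_W)\simeq\Coh\bigl(\EE(X,0,\rho)\bigr)$. The delicate point here is to carry out this identification uniformly over the non-closed field $\RR$ — in particular at the complex points of $X_{\mathsf{re}}$ and at all points of $X_{\mathsf{qt}}$ — and to match the resulting combinatorics of $\rho$ with the orbifold notation of $W$.

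Once this is in place, Theorem~\ref{T:TiltingMain} furnishes an exact equivalence $D^b\bigl(\Coh(\XX_W)\bigr)\simeq D^b(\Pi_W\mathsf{-mod})$, where $\Pi_W$ is the squid algebra~(\ref{E:Squid}) of $(X,0,\rho)$; its tame bimodule is $\Lambda_{\mathsf{re}}$, $\Lambda_{\mathsf{co}}$ or $\Lambda_{\mathsf{qt}}$ according to $X$, and its arms are indexed by the elliptic points and corner reflectors of $\dX_W$. To see that $\Pi_W$ is of tubular type it remains to check that $\XX_W$ has virtual genus one, i.e.\ that the Euler form of $\Coh(\XX_W)$ is positive semidefinite with two-dimensional radical. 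This is forced by $\dY = \CC/T$ being a torus: the covering $\dY\to\dX_W$ has degree $|G|$, so $\chi^{\mathrm{orb}}(\dX_W) = \chi_{\mathrm{top}}(\dY)/|G| = 0$, and writing $\chi^{\mathrm{orb}}(\dX_W)$ in terms of the Euler characteristic of the underlying surface and the contributions $1-\frac{1}{n}$ of the elliptic points and $\frac{1}{2}(1-\frac{1}{n})$ of the corner reflectors — with the appropriate weighting of complex versus real special points and of the quaternionic bimodule for $X = X_{\mathsf{co}}, X_{\mathsf{qt}}$ — recovers exactly the numerical condition characterizing tubular canonical algebras. The weight type of $\Pi_W$ is therefore one of $(2,2,2,2)$, $(3,3,3)$, $(2,4,4)$, $(2,3,6)$, suitably decorated, and which one occurs can be read off each of the thirteen admissible wallpaper groups directly from its orbifold symbol.
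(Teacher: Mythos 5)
Your derivation of the equivalence itself follows the paper's route: the paper's proof is exactly "apply Theorem \ref{T:TiltingMain}", with the hypotheses (triviality of $\eta_{\XX_W}$ via Lemma \ref{L:SkewGroupOfFields}, exceptionality of $\eta=0$ for the three real genus-zero curves, and the local Morita types coming from Lemma \ref{L:ReductionSkewGroupAction}, Proposition \ref{P:ActionTypes}, Lemma \ref{L:ActionDVR} and Proposition \ref{P:SkewProducts}) left implicit because they were established in Sections 5--6; you simply spell this verification out, which is fine and in fact makes the application of Theorem \ref{T:TiltingMain}/\ref{T:Weighting} to $\XX_W$ more transparent. Where you genuinely diverge is the tubularity claim. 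The paper proves it by brute force: it lists the $17$ wallpaper groups, discards the four whose quotient Klein surface has genus one, and observes that the remaining thirteen weight symbols $\bigl(X,\rho\bigr)$ are all of tubular type. You instead argue uniformly that $\chi^{\mathrm{orb}}(\dX_W)=\chi_{\mathrm{top}}(\dY)/|W/T|=0$ because $\dY$ is a torus, and that vanishing orbifold Euler characteristic translates into the numerical criterion for tubularity of the real squid algebra. This is the conceptually better explanation (it is exactly Lenzing's original observation recorded in Remark \ref{R:LenzingKussin}), but be aware that the translation step is not available inside this paper: one needs the Euler-characteristic/Euler-form formalism for weighted noncommutative regular curves over $\RR$ (complex points weighted by $[\CC:\RR]$, corner-reflector points by $\tfrac12$-type factors, the quaternionic bimodule for $X_{\mathsf{qt}}$), which is developed in \cite{Kussin}, not here; as written, "recovers exactly the numerical condition characterizing tubular canonical algebras" is an assertion, not a proof. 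Since tubularity only has to be checked for finitely many symbols, your own fallback (reading the type off each of the thirteen orbifold symbols) closes this, and that is precisely what the paper does; so the proposal is correct, at the cost of either importing Kussin's Euler-characteristic machinery or reverting to the paper's case-by-case table for the final step. One cosmetic slip: the endomorphism algebra of Kussin's tilting bundle on $X_{\mathsf{qt}}$ is the tame bimodule algebra $\left(\begin{smallmatrix}\RR & \HH\\ 0 & \HH\end{smallmatrix}\right)$, not $\HH$ itself.
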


\begin{proof} Since $g(X) = 0$, Theorem \ref{T:TiltingMain} implies that there exists a squid algebra $\Pi_W$ such that $D^b\bigl(\Coh(\XX_W)\bigr) \simeq D^b\bigl(\Pi_W\mathsf{-mod}\bigr)$. Recall (see \cite{Klemm, Montesinos}) the classification of the isomorphism classes of wallpaper groups and the corresponding flat surface orbifolds:

\begin{center}
\begin{tabular}{|c|c|c|c|}
\hline
\textnumero & Wallpaper group & Orbifold type & hereditary  curve type\\
\hline
1 & hexatrope group & $\mathfrak{S}(2, 3, 6)$ & $X_{\mathsf{co}}(2, 3, 6)$ \\
2 & tetratrope  group & $\mathfrak{S}(2, 4, 4)$ & $X_{\mathsf{co}}(2, 4, 4)$ \\
3 & tritrope group & $\mathfrak{S}(3, 3, 3)$  & $X_{\mathsf{co}}(3, 3, 3)$ \\
4 & ditrope group & $\mathfrak{S}(2, 2, 2, 2)$ & $X_{\mathsf{co}}(2, 2, 2, 2)$ \\
5 & hexascope  group & $\mathfrak{D}(\bar{2}, \bar{3}, \bar{6})$ & $X_{\mathsf{re}}(\bar{2}, \bar{3}, \bar{6})$\\
6 & tetrascope  group & $\mathfrak{D}(\bar{2}, \bar{4}, \bar{4})$ &  $X_{\mathsf{re}}(\bar{2}, \bar{4}, \bar{4})$\\
7 & triscope group & $\mathfrak{D}(\bar{3}, \bar{3}, \bar{3})$ & $X_{\mathsf{re}}(\bar{3}, \bar{3}, \bar{3})$ \\
8 & discope group & $\mathfrak{D}(\bar{2}, \bar{2}, \bar{2}, \bar{2})$ & 
$X_{\mathsf{re}}(\bar{2}, \bar{2}, \bar{2}, \bar{2})$
\\
9 & tetragyro group & $\mathfrak{D}(4, \bar{2})$ & $X_{\mathsf{re}}(4, \bar{2})$\\
10 & trigyro group & $\mathfrak{D}({3}, \bar{3})$ & $X_{\mathsf{re}}(3, \bar{3})$ \\
11 & digyro group & $\mathfrak{D}({2}, 2)$ & $X_{\mathsf{re}}(2, {2})$ \\
12 & dirhomb group & $\mathfrak{D}({2}, \bar{2}, \bar{2})$ & $X_{\mathsf{re}}({2}, \bar{2}, \bar{2})$ \\
13 & diglide group & $\mathfrak{P}({2}, 2)$ & $X_{\mathsf{qt}}({2}, 2)$ \\
\hline
\hline
14 & monotrope group & torus & $\mathsf{Proj}\bigl(\CC[x,y,z]/(zy^2 -x^3 +x z^2)\bigr)$\\
15  & monoglide group & Klein bottle & $\mathsf{Proj}\bigl(\RR[x,y,z]/(z^2 y^2 + (x^2 + z^2)(x^2 + z^2))\bigr)$ \\
16  & monorhomb group & M\"obius band & $\mathsf{Proj}\bigl(\RR[x,y,z]/(zy^2 -x^3 - x z^2)\bigr)$ \\
17 & monoscope group & annulus &  $\mathsf{Proj}\bigl(\RR[x,y,z]/(zy^2 -x^3 +x z^2)\bigr)$ \\
\hline
\end{tabular}
\end{center}
The last four types of the above table correspond to real projective curve of genus one, the stated correspondence is taken from \cite[Example 1]{AllingGreenleaf}. The corresponding derived category $D^b\bigl(\Coh(\XX)\bigr)$ does not have tilting objects. In the first thirteen cases, it  follows from the stated classification, that the  squid algebra $\Pi_W$ has a tubular type.
\end{proof}

\begin{remark}\label{R:LenzingKussin} The correspondence between wallpaper groups and real hereditary curves of tubular type was for the first time observed by Lenzing many years ago \cite{LenzingLastTalk}. Kussin in \cite[Corollary 13.23]{Kussin} gave a classification of all hereditary curves of tubular type. From this classification it  became apparent that the curves
of type $\XX_W$ are precisely those ones, for which $[\eta_{\XX}] = 0$: indeed,  the corresponding numerical patterns are the same.  Kussin informed me about another approach to establish a more concrete correspondence between wallpaper groups and exceptional hereditary curves of tubular type \cite{KussinNext}. However, the works
\cite{Kussin, KussinNext} are heavily based on the ``axiomatic approach''  to non-commutative hereditary curves and the corresponding proofs are technically different from the ones given in this paper.
\end{remark}


\begin{thebibliography}{99}
\bibitem{AllingGreenleafFirst} N.-L.~Alling \& N.~Greenleaf, \emph{
Klein surfaces and real algebraic function fields},  Bull.~Amer.~Math.~Soc. \textbf{75} (1969), 869--872.

\bibitem{AllingGreenleaf} N.-L.~Alling \& N.~Greenleaf, \emph{Foundations of the theory of Klein surfaces},  Lecture Notes in Mathematics \textbf{219},  Springer, Berlin--New York (1971),  ix+117 pp.

\bibitem{ArtindeJong} M.~Artin \& A.~de Jong, 
\emph{Stable orders over surfaces}, 
\url{http://www.math.lsa.umich.edu/courses/711/}\linebreak \texttt{ordersms-num.pdf}, unpublished manuscript.

\bibitem{BrennerButler}
S.~Brenner \& M.~C.~R.~Butler, \emph{Generalizations of the Bernstein--Gelfand--Ponomarev reflection functors},  Representation theory II,  pp. 103--169, Lecture Notes in Math. \textbf{832}, Springer, Berlin, 1980.

\bibitem{BEGG}
E.~Bujalance, J.~Etayo, J.~Gamboa \& G.~Gromadzki, \emph{Automorphism groups of compact bordered Klein surfaces. A combinatorial approach},  Lecture Notes in Math. \textbf{1439},  Springer-Verlag, Berlin (1990) xiv+201 pp.

\bibitem{BurbanDrozdSurface}  
I.~Burban \& Yu.~Drozd, \emph{Maximal Cohen--Macaulay modules over surface singularities}, Trends in representation theory of algebras and related topics, 101--166, EMS Ser. Congr. Rep., Eur. Math. Soc., Z\"urich, 2008.


\bibitem{BurbanDrozdAuslanderC}
 I.~Burban \& Yu.~Drozd,
 \emph{Tilting on non-commutative rational projective curves},
 Math.~Ann.~\textbf{351}, no.~3 (2011), 665--709.
 
 \bibitem{bdnpdalcurves} I.~Burban \& Yu.~Drozd,
 \emph{Non-commutative nodal curves and derived tame algebras}, 
 \texttt{arXiv:1805.05174}.

 

\bibitem{BurbanDrozdMorita}
I.~Burban \& Yu.~Drozd, \emph{Morita theory for non-commutative noetherian schemes}, Adv.~Math.~\textbf{399} (2022), Paper No. 108273, 42 pp.


\bibitem{BurbanDrozdQuotients} I.~Burban \& Yu.~Drozd, \emph{Some aspects of the theory of nodal orders},  \texttt{arXiv:2406.02375}. 




  
 \bibitem{Minors}
I.~Burban, Yu.~Drozd \& V.~Gavran, \emph{Minors and resolutions of non-commutative schemes},  Eur. J. Math. \textbf{3} (2017), no. 2, 311--341. 

 \bibitem{BDG}
 I.~Burban, Yu.~Drozd \& V.~Gavran, \emph{Singular curves and quasi-hereditary algebras},  Int.~Math. Res. Not. ~\textbf{2017}, no. 3 (2017), 895--920.


\bibitem{DlabRingel}
V.~Dlab \& C.-M.~Ringel, \emph{Indecomposable representations of graphs and algebras},  Mem.~Amer. Math. Soc.~\textbf{6} (1976), no.~173, v+57 pp.

\bibitem{DrozdKirichenko}
Yu.~Drozd \& V.~Kirichenko, \emph{Finite-dimensional algebras}, Translated from the 1980 Russian original and with an appendix by V.~Dlab. Springer (1994),  xiv+249 pp.


\bibitem{ChanIngalls}
D.~Chan \& C.~Ingalls, \emph{Non-commutative coordinate rings and stacks},  Proc. London Math. Soc. (3) \textbf{88} (2004), no. 1, 63--88.


\bibitem{GeigleLenzing}
W.~Geigle \& H.~Lenzing,
\emph{A class of weighted projective curves arising in representation theory
of finite-dimensional algebras},
 {Singularities, representation of algebras, and vector bundles}, 265--297,
Lecture Notes in Math. \textbf{1273}, Springer   (1987).

\bibitem{SGA1}
 A.~Grothendieck, \emph{Revêtements \'etales et groupe fondamental}.  Fasc. I: Exposés 1 \`a 5.  Séminaire de Géométrie Algébrique, 1960/61. Institut des Hautes Études Scientifiques, Paris, 1963. iv+143 pp.

\bibitem{Happel} 
  D.~Happel, \emph{A characterization of hereditary categories with tilting object},  Invent. Math. \textbf{144} (2001), no. 2, 381--398.

\bibitem{HappelReiten}
D.~Happel \& I.~Reiten, \emph{Hereditary abelian categories with tilting object over arbitrary base fields},  J. Algebra \textbf{256} (2002), no. 2, 414--432. 


\bibitem{Harada}
 M.~Harada, \emph{Hereditary orders},  Trans. Amer. Math. Soc. \textbf{107} (1963) 273--290.

\bibitem{Harada2}
M.~Harada, \emph{Structure of hereditary orders over local rings},  J. Math. Osaka City Univ. \textbf{14} (1963), 1--22.

 \bibitem{Keller}
  B.~Keller, \emph{Deriving DG categories},  Ann. Sci. \'Ecole Norm. Sup. (4) \textbf{27} (1994), no.~1, 63--102.
  
\bibitem{Kirillov}  A.~Kirillov, \emph{McKay correspondence and equivariant sheaves on $\mathbbm{P}^1$}, Mosc.~Math.~J. \textbf{6} (2006), no. 3, 505--529, 587--588.

\bibitem{Klemm}  
M.~Klemm, \emph{Symmetrien von Ornamenten und Kristallen}, 
 Hochschultext,  Springer-Verlag, Berlin--New York (1982) vii+214 pp.


\bibitem{Kostrikin}
A.~Kostrikin, \emph{Introduction to algebra}, Springer-Verlag,   Berlin--New York (1982) xiii+575 pp.

\bibitem{KussinTilting}
D.~Kussin, \emph{Factorial algebras, quaternions and preprojective algebras},  Algebras and modules, II (Geiranger, 1996), 393--402, CMS Conf. Proc. \textbf{24}, Amer. Math. Soc., Providence, RI, 1998.


\bibitem{KussinMemoirs}
 D.~Kussin, \emph{Noncommutative curves of genus zero: related to finite dimensional algebras},  Mem. Amer. Math. Soc. \textbf{201} (2009), no. 942, x+128 pp.

\bibitem{Kussin}
 D.~Kussin, \emph{Weighted noncommutative regular projective curves},  J. Noncommut. Geom. \textbf{10} (2016), no. 4, 1465--1540.
 
\bibitem{KussinNext}
 D.~Kussin, \emph{Weighted noncommutative regular projective curves II},  work in preparation.

\bibitem{Lang} 
 S.~Lang, \emph{Algebra},  Revised third edition. Graduate Texts in Mathematics \textbf{211},  Springer-Verlag, New York (2002) xvi+914 pp.

\bibitem{LenzingFirst} 
H.~Lenzing, \emph{Curve singularities arising from the representation theory of tame hereditary algebras},  Representation theory I, 199--231, Lecture Notes in Math. \textbf{1177}, Springer, Berlin, (1986).
 
\bibitem{Lenzing}
 H.~Lenzing, \emph{Representations of finite-dimensional algebras and singularity theory},  Trends in ring theory (Miskolc, 1996), 71--97,
CMS Conf. Proc. \textbf{22}, Amer. Math. Soc., Providence, RI, 1998.

\bibitem{LenzingLastTalk}
H.~Lenzing, \emph{Klein bottle, M\"obius band and their non-commutative siblings}, talk in the Oberseminar ``Algebra and Algebraic Geometry'' at Paderborn University on 26.06.2019 as well as slides of a Colloquium talk at Erlangen University on 13.11.2001.

\bibitem{LenzingdelaPena}
H.~Lenzing \& J.~A.~de la Pe\~na, \emph{Concealed-canonical algebras and separating tubular families}, Proc. London Math. Soc. (3) \textbf{78} (1999), no. 3, 513--540.

\bibitem{LenzingReiten}
H.~Lenzing \& I.~Reiten, \emph{Hereditary Noetherian categories of positive Euler characteristic}, Math. Z. \textbf{254} (2006), no. 1, 133--171.


\bibitem{Montesinos}
 J.~M.~Montesinos, \emph{Classical tessellations and three-manifolds},  Universitext. Springer-Verlag, Berlin, 1987. xviii+230 pp.

\bibitem{Mustata} M.~Mustata, \emph{Zeta functions in algebraic geometry}, 
\url{https://www.math.lsa.umich.edu/~mmustata/zeta_book.pdf}, unpublished manuscript. 

\bibitem{NaeghvdBergh} K.~de Naeghel, M.~van den Bergh,
\emph{Ideal classes of three-dimensional Sklyanin algebras},
J. Algebra  \textbf{276} no.~2, 515--551 (2004).


\bibitem{Orbifolds}
A.~Polishchuk, {\it Holomorphic bundles on 2-dimensional noncommutative toric orbifolds},
  Noncommutative geometry and number theory,  341--359, Aspects Math. E37, Vieweg,  (2006).

\bibitem{ReinerMO}
I.~Reiner, \emph{Maximal orders}, London Mathematical Society Monographs,  New Series \textbf{28}. The Clarendon Press, Oxford University Press, Oxford, 2003. 

\bibitem{ReitenRiedtmann}
I.~Reiten \& Ch.~Riedtmann, 
\emph{Skew group algebras in the representation theory of Artin algebras}, 
J. Algebra \textbf{92} (1985), no. 1, 224--282.
 

\bibitem{ReitenvandenBergh}
I.~Reiten \& M.~Van den Bergh, \emph{Noetherian hereditary abelian categories satisfying Serre duality},  J.~Amer. Math. Soc.~\textbf{15} (2002), no. 2, 295--366. 

\bibitem{RingelTameAlgebras}
C.-M.~Ringel, {\it Tame algebras and integral quadratic forms},
 {Lecture Notes in Math.} \textbf{1099},  Springer  (1984).

\bibitem{Ringel}
C.-M.~Ringel, \emph{The canonical algebras}, Topics in Algebra, Part 1, Banach Center Publ., no.~\textbf{26}, PWN, 1990, With an appendix by W.~Crawley-Boevey, pp. 407--432.


\bibitem{SimSko}
 D.~Simson \& A.~Skowroński, \emph{Elements of the representation theory of associative algebras, Vol. 2. Tubes and concealed algebras of Euclidean type},  LMS Student Texts \textbf{71},  Cambridge University Press, Cambridge, 2007. xii+308 pp.

\bibitem{Spiess}
 M.~Spie\ss{}, \emph{Twists of Drinfeld--Stuhler modular varieties},  Doc. Math. 2010, Extra vol.: Andrei A. Suslin sixtieth birthday, 595--654.

\bibitem{YekutieliZhang}
A.~Yekutieli \& J.~Zhang,
\emph{Dualizing complexes and perverse sheaves on noncommutative ringed schemes},
Selecta Math. (N.S.) \textbf{12} (2006), no.~1, 137--177.

\end{thebibliography}
\end{document}